\title{N\'eron models of jacobians over base schemes of dimension greater than 1}
\newcommand{\on}[1]{\operatorname{#1}}
\newcommand{\bb}[1]{{\mathbb{#1}}}
\newcommand{\cl}[1]{{\mathscr{#1}}}
\newcommand{\ca}[1]{{\mathcal{#1}}}
\newcommand{\abs}[1]{\lvert#1\rvert}
\newcommand{\ra}{\rightarrow}
\newcommand{\hra}{\hookrightarrow}
\newcommand{\sub}{\subseteq}
\newcommand{\tra}{\rightarrowtail}
\theoremstyle{definition}
\newtheorem{definition}{Definition}[section]
\theoremstyle{plain}
\newtheorem{proposition}[definition]{Proposition}
\newtheorem{lemma}[definition]{Lemma}
\newtheorem{theorem}[definition]{Theorem}
\newtheorem{corollary}[definition]{Corollary}
\theoremstyle{remark}
\newtheorem{remark}[definition]{Remark}
\newtheorem{example}[definition]{Example}
\def\defeq{\stackrel{\text{\tiny def}}{=}}
\renewcommand{\phi}{\varphi}
\newcommand{\closure}[2]{\on{clo}_{#1}(#2)}
\newcommand{\Pictdz}{\on{Pic}^{[0]}}
\newcommand{\Picz}{\on{Pic}^{0}}
\newcounter{temp}
\author{David Holmes}
\date{\today}
\newcounter{nootje}
\newcommand{\shorten}[2]{#2} 
\newcommand{\beq}{\begin{equation}}
\newcommand{\eeq}{\end{equation}}
\newcommand{\beqs}{\begin{equation*}}
\newcommand{\eeqs}{\end{equation*}}
\begin{document}
\maketitle
\begin{abstract} 
We investigate to what extent the theory of N\'eron models of jacobians and of abel-jacobi maps extends to relative curves over base schemes of dimension greater than 1. We give a necessary and sufficient criterion for the existence of a N\'eron model. We use this to show that, in general, N\'eron models do not exist even after making a modification or even alteration of the base. On the other hand, we show that N\'eron models \emph{do} exist outside some codimension-2 locus. 

\end{abstract}

\tableofcontents

\newcommand{\et}{^{\on{et}}}

\section{Introduction}

Let $C/S$ be a proper generically smooth curve over a Dedekind scheme. The jacobian $J$ of the generic fibre of $C$ has a N\'eron model $N/S$ (\cite{Neron1964Modeles-minimau}, \cite{Bosch1990Neron-models}). If a point $\sigma \in C(K)$ is given, we obtain a canonical map $\alpha\colon C^{\on{sm}} \ra N$ from the smooth locus of $C/S$ to the N\'eron model; this map is called the \emph{abel-jacobi} map. If moreover $C/S$ is for example semistable, then it is possible to give a very explicit description of the N\'eron model $N$ and the abel-jacobi map $\alpha$, using the relative Picard functor of $C/S$ (see for example \cite{Bosch1990Neron-models}, \cite{Edixhoven1998On-Neron-models}). 

Suppose now that we replace the Dedekind scheme $S$ by an arbitrary regular scheme. The main aim of this paper is to understand to what extent the theory of N\'eron models and abel-jacobi maps described above carries across into this more general situation, and in what ways the theory must be modified. First we give a definition of the N\'eron model\footnote{Perhaps a more conventional (though less general) definition would be to assume $S$ to be integral and to have $A$ an abelian scheme over the generic point of $S$. However, such $A$ would extend to an abelian scheme over some non-empty open $U \sub S$, and an abelian scheme over a regular integral base is automatically the N\'eron model of its generic fibre. As such, the two definitions are roughly equivalent. }:
\begin{definition}
Let $S$ be a scheme and $U\sub S$ a schematically dense open subscheme. Let $A/U$ be an abelian scheme. A \emph{N\'eron model} for $A$ over $S$ is a smooth separated algebraic space $N/S$ together with an isomorphism $A \ra N_U = N \times_S U$  satisfying the following universal property:
\emph{let $T \ra S$ be a smooth morphism of algebraic spaces\footnote{Our non-existence results still hold, with almost the same proofs, if `algebraic spaces' here is replaced by `schemes'.} and $f\colon T_U \ra N_U$ any $U$-morphism. Then there exists a (unique) $S$-morphism $F\colon T \ra N$ such that $F|_{T_U} = f$. }
\end{definition}
Note that if a N\'eron model exists then it is unique up to unique isomorphism, and has a canonical structure as a group algebraic space. What we here call a `N\'eron model' is more commonly called a `Neron \emph{lft} model' (cf. \cite{Bosch1990Neron-models}), where \emph{lft} stands for `locally of finite type', to emphasise that quasi-compactness of $N/S$ is not assumed in the definition. 

Let $C/S$ be a semistable curve which is smooth over a dense open subscheme $U \sub S$. Write $J$ for the jacobian of the smooth proper curve $C_U/U$. If $C/S$ is pointed, a N\'eron model of the jacobian admits an abel-jacobi map from the smooth locus of $C/S$ (by the universal property). Our first result is essentially negative in nature. We say that such a curve $C/S$ is \emph{aligned} if it satisfies a certain rather strong condition described in terms of the dual graphs of fibres and the local structure of the singularities (\cref{def:balance}). We have
\begin{theorem}[\cref{thm:balanced_equivalent_to_flat}, \cref{lem:NM_exists}]\label{thm:intro_NM}
Suppose $S$ is regular. We have:
\begin{enumerate}
\item if the jacobian $J/U$ admits a N\'eron model over $S$ then $C/S$ is aligned; 
\item if $C$ is regular and $C/S$ is aligned then the jacobian $J/U$ admits a N\'eron model over $S$.  
\end{enumerate}

\end{theorem}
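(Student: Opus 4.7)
The plan is to construct the candidate Néron model as the fppf quotient of a suitable relative Picard algebraic space of $C/S$ by the schematic closure of the trivial section of the generic fibre, and to recognise alignment as the precise obstruction to this quotient being representable and smooth over $S$. Concretely, set $P := \Pictdz_{C/S}$, the algebraic space of line bundles of total degree zero; since $C/S$ is semistable, $P$ is a smooth separated group algebraic space restricting to $J$ over $U$. Inside $P$ let $E$ denote the schematic closure (taken in the subgroup generated by twists supported on irreducible components of fibres) of the identity section of $J$. The candidate Néron model is $N := P/E$, which will be smooth and separated if and only if $E$ is flat over $S$.

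For part (1), assume $N$ exists. The smoothness of $P \ra S$ together with the Néron mapping property gives a unique homomorphism $\phi\colon P \ra N$ extending the identity on $J$; its kernel $K$ is a closed subgroup space of $P$ containing $E$ and agreeing with $E$ over $U$. Flatness of $K$ over $S$, combined with an étale-local analysis at a node of a fibre (modelled by $xy = f$ with $f$ a non-zero-divisor in the local ring of $S$), translates into a divisibility compatibility among the divisors $\on{div}(f)$ for the nodes belonging to a common circuit of the dual graph of the fibre. This is precisely the alignment condition of \cref{def:balance}; the translation is the content of \cref{thm:balanced_equivalent_to_flat}.

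Conversely, for part (2), suppose $C$ is regular and $C/S$ is aligned. The same local calculation now runs in reverse: alignment forces $E$ to be flat over $S$, so $N := P/E$ exists as a smooth separated algebraic space inheriting the group structure. To verify the Néron property, given a smooth $T \ra S$ and a $U$-morphism $f\colon T_U \ra J$, lift $f$ étale-locally on $T$ through the smooth surjection $P_U \ra J$ using smoothness of $T$, observe that two local lifts differ by a section of $E_U$, and hence glue to a unique global map $T \ra N$. For the last two assertions: the component group of $P$ over any geometric fibre is finite (it is generated by the classes of irreducible components modulo total degree), and hence so is that of $N$, giving finite type; the fibrewise identity component $N^0$ is the image of $\Picz_{C/S}$, which is a scheme by classical representability, and the quotient of a scheme by a flat closed subgroup remains a scheme on the identity component.

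The main obstacle is plainly the equivalence between alignment and flatness of $E$, that is, \cref{thm:balanced_equivalent_to_flat}. This demands a careful étale-local study of $C$ at a node over a higher-dimensional regular base, tracking how the ideal defining the identity section of $P$ interacts with the combinatorics of the dual graph: each node contributes a single thickness divisor on $S$, and alignment is precisely the statement that within each circuit of the dual graph these divisors admit a common divisor, which is what turns the merely closed subspace $E$ into a flat one. Everything else in the theorem is comparatively formal once this local-to-combinatorial dictionary is in hand.
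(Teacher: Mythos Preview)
Your overall architecture matches the paper: set $P=\Pictdz_{C/S}$, let $E$ be the closure of the unit section, and identify alignment with flatness of $E$, then form $N=P/E$. However, several of the steps you treat as routine are in fact where the work lies, and one of your arguments is wrong.

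For part (1) you assert that the kernel $K$ of $\phi\colon P\to N$ is flat over $S$. This does not follow from $P$ and $N$ being smooth over $S$; a map between smooth $S$-group spaces need not have flat kernel. The paper proves flatness of $K$ by first showing $\phi$ is \emph{smooth}: on identity components $\phi_0\colon P^0\to N^0$ is an open immersion by \cite[IX, 3.1(e)]{SGA7I} (using that $P^0$ is semi-abelian), and then a translation argument (the paper's \cref{lem:flat_on_identity_implies_flat}) propagates smoothness to all of $\phi$. Only then does one conclude $K$, hence $E$, is flat, and then invoke the equivalence with alignment. Without this step your implication has a genuine gap.

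For part (2), your verification of the N\'eron mapping property is confused: $P_U\to J$ is an isomorphism, so there is nothing to ``lift through''. The issue is \emph{extending} a map $T'_U\to P_U$ to $T'\to P$. The paper does this by representing the map by a Cartier divisor on $C_{T'_U}$ and taking its closure in $C_{T'}$; the closure is Cartier precisely because $C$ is regular (so $C\times_S T'$ is regular for $T'\to S$ smooth). This is where the hypothesis ``$C$ regular'' enters, and your sketch does not use it.

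Finally, your finite-type argument is incorrect. The component group of $\Pictdz_{C/S}$ over a geometric fibre with $n$ irreducible components is $\bb{Z}^{n-1}$, not finite; so $P$ is not of finite type, and finiteness of $N=P/E$ is not automatic. The paper instead checks quasi-compactness of fibres of $N$ by pulling back along non-degenerate traits $T\to S$: flatness of $E$ ensures $\phi^*N=\on{Pic}^{[0]}_{C_T/T}/\phi^*E$, which is an open subgroup of the (finite-type) N\'eron model over the trait. For the last clause, one does not argue via quotients; rather $N^0$ is canonically \emph{isomorphic} to $\on{Pic}^0_{C/S}$, which is a scheme by \cite[9.4.1]{Bosch1990Neron-models}.
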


One consequence of the above theorem is that if $C/S$ is not aligned then no proper regular semistable model of $C_U$ over $S$ is aligned. If $C/S$ is split semistable and smooth over the complement of a strict normal crossings divisor in $S$ then we can determine whether or not a N\'eron model exists without needing to construct a regular model - see \cref{sec:changing_the_model}. As a corollary of the above theorem, we have
\begin{corollary}[\cref{cor:NM_outside_codimension_2}]Let $S$ be an excellent scheme, regular in codimension 1. Let $U \sub S$ be a dense open regular subscheme, and let $C/S$ be a semistable curve which is smooth over $U$. Then there exists an open subscheme $U \sub V \sub S$ such that the complement of $V$ in $S$ has codimension at least 2 in $S$ and such that the jacobian of $C_U/U$ admits a N\'eron model over $S$ whose fibrewise-identity-component is an $S$-scheme. 
\end{corollary}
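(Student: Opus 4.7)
The plan is to apply \cref{thm:intro_NM} in Zariski neighbourhoods of the codimension-$1$ points of $S\setminus U$ and glue the resulting local N\'eron models. First I replace $S$ by its regular locus $S^{\on{reg}}$: this is open since $S$ is excellent, has complement of codimension $\geq 2$ by the regular-in-codimension-$1$ hypothesis, and still contains $U$. So I may assume $S$ is regular. Let $s_1,\dots,s_k$ be the codimension-$1$ generic points of $S\setminus U$, which are finite in number because $S$ is noetherian, and let $D_i = \overline{\{s_i\}}$.

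For each $i$ I choose an open neighbourhood $W_i$ of $s_i$ which meets $S\setminus U$ only along $D_i$; this is possible because $s_i$ lies on no other irreducible component of $S\setminus U$ and on no higher-codimension component either. Over the DVR $\ca{O}_{S,s_i}$ the semistable curve $C$ admits a regular semistable model, obtained by the classical blow-up resolution of $A_n$-singularities of the form $xy = u\pi^n$. Using noetherianity of $S$ and openness of the regular and semistable loci one spreads this out (after shrinking $W_i$) to a regular semistable curve $\widetilde C_i \to W_i$ that agrees with $C$ over $W_i\cap U$.

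Now $\widetilde C_i/W_i$ is aligned in the sense of \cref{def:balance}: the only divisorial component of the non-smooth locus of $\widetilde C_i \to W_i$ is $D_i\cap W_i$, and regularity of $\widetilde C_i$ forces the local equation $xy=f$ at each node to have $f$ equal to a uniformizer of $D_i$ up to a unit, so all nodal thicknesses equal $1$ and the commensurability/equality conditions of alignment reduce to a tautology. Hence \cref{thm:intro_NM}(2) furnishes a finite-type N\'eron model $N_i \to W_i$ whose fibrewise identity component is a $W_i$-scheme. Setting $V := U \cup \bigcup_{i=1}^k W_i$ gives an open subscheme of $S$ containing $U$ whose complement contains no codimension-$1$ point of $S$, hence has codimension $\geq 2$.

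To conclude, I glue: on any overlap the restrictions of the $N_i$ and of $J/U$ satisfy the N\'eron universal property for the same abelian scheme, so they are canonically isomorphic, and the isomorphisms agree on triple overlaps by uniqueness. They therefore descend to a finite-type N\'eron model $N\to V$ whose fibrewise identity component is a $V$-scheme (both properties being Zariski local on the base). The main non-formal step in this plan is the construction of the spread-out regular semistable model $\widetilde C_i$: over the DVR the resolution is classical, but one has to check that the centres of the requisite blow-ups can be taken to be closures of the singular loci in some spread-out of $C$ over $W_i$, and that after performing them and invoking openness of the regular and semistable loci, $W_i$ can be shrunk so that $\widetilde C_i$ is regular and semistable on the nose.
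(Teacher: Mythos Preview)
Your approach is essentially correct and follows the same underlying strategy as the paper: produce a regular semistable model outside codimension $2$, observe that alignment is automatic there, and invoke \cref{thm:intro_NM}. One small slip: you assert that the codimension-$1$ generic points of $S\setminus U$ are finite in number ``because $S$ is noetherian'', but an excellent scheme is only \emph{locally} noetherian, and nothing in the hypotheses forces $S$ to be quasi-compact. Fortunately your argument never actually uses this finiteness --- the gluing works over an arbitrary open cover, and the complement of $V=U\cup\bigcup_i W_i$ still misses every codimension-$1$ point --- so you can simply drop the claim.

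Where you differ from the paper is in the organisation. The paper does not localise and glue: instead it appeals to de Jong's resolution lemma (\cite[5.2]{Jong1996Smoothness-semi}) to build, after discarding a codimension-$2$ locus, a \emph{single} regular semistable model $\tilde C\to V$ and then applies \cref{lem:NM_exists} once (\cref{prop:resolution_outside_codimension_2}). Your version replaces this global input by the classical resolution over each DVR $\ca{O}_{S,s_i}$ together with a spreading-out argument, and then glues the resulting N\'eron models via their universal property. This is more elementary in that it avoids quoting de Jong, but the price is exactly the step you flag: checking that the blown-up centres spread out to give a regular semistable $\tilde C_i$ over a genuine open $W_i$ is essentially a localised version of de Jong's argument. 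Your treatment of alignment is in fact a bit more transparent than the paper's one-line ``clear from the definition'': by arranging that $W_i\cap(S\setminus U)\subset D_i$, every singular ideal on $\tilde C_i$ is forced (by regularity) to equal the local equation of $D_i$, so the alignment relations are trivially satisfied. You might make explicit that you also shrink $W_i$ so that $D_i\cap W_i$ is regular, which is needed for this step.
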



It is easy to construct examples (c.f. \cref{sec:basic_examples}) of non-aligned semistable curves; intuitively, `most' semistable curves over base schemes of dimension greater than 1 are non-aligned, and so do not admit N\'eron models (one sees easily from the definition that a semistable curve over a Dedekind scheme must be aligned). However, perhaps inspired by \cite{Deligne1985Le-lemme-de-Gab}, one might ask whether $C/S$ admits a N\'eron model after blowing up $S$, or making some alteration of $S$. This turns out again to have a negative answer; we have
\begin{theorem}[\cref{prop:persistence}]\label{thm:intro_balance_stable}
Let $S$ be a locally noetherian normal scheme, $C/S$ semistable over $S$ and smooth over a dense open $U \sub S$, and $f\colon S' \ra S$ a proper surjective morphism. Then $C/S$ is aligned if and only if $f^*C/S'$ is aligned.  
\end{theorem}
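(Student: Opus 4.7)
The plan is to reduce both directions to a local question about node thicknesses in strict henselizations, and then handle each direction separately. Alignment is a condition checked at each geometric point $\bar{s} \ra S$, depending only on the dual graph of $C_{\bar{s}}$ together with the thicknesses $\alpha_e \in \ca{O}_{S,\bar{s}}^{sh}$ of its nodes. Since $f$ is proper with scheme-theoretically dense image, it is surjective, so every geometric point $\bar{s} \ra S$ admits a geometric point $\bar{s'} \ra S'$ over it; the dual graphs of $C_{\bar{s}}$ and $(f^*C)_{\bar{s'}}$ then coincide, and thicknesses at $\bar{s'}$ are the images of those at $\bar{s}$ under the local ring homomorphism $\ca{O}_{S,\bar{s}}^{sh} \ra \ca{O}_{S',\bar{s'}}^{sh}$.

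The forward direction is essentially formal: any ring homomorphism preserves inclusion of principal ideals, so whatever divisibility relations among thicknesses are demanded by alignment at $\bar{s}$ are inherited by the pullbacks at $\bar{s'}$. Hence aligned $C/S$ gives aligned $f^*C/S'$.

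For the backward direction I would argue the contrapositive. Suppose $C/S$ fails alignment at some $\bar{s}$, witnessed by a circuit of the dual graph of $C_{\bar{s}}$ carrying thicknesses $\alpha,\beta \in \ca{O}_{S,\bar{s}}^{sh}$ with (say) $\alpha \nmid \beta$. It suffices to produce a geometric point $\bar{s'} \ra S'$ over $\bar{s}$ with $f^*\alpha \nmid f^*\beta$ in $\ca{O}_{S',\bar{s'}}^{sh}$. Base-changing $f$ to $T := \on{Spec} \ca{O}_{S,\bar{s}}^{sh}$ yields a proper morphism $f_T\colon X \ra T$ with scheme-theoretically dense image. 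After passing to a reduced irreducible component of $T$ on which the non-divisibility persists and then to its normalization, one obtains a height-one prime $\mathfrak{p}$ of $T$ with $v_\mathfrak{p}(\beta) < v_\mathfrak{p}(\alpha)$. Density supplies a preimage of the generic point of $T$ in $X$; the valuative criterion of properness, applied to the DVR $\ca{O}_{T,\mathfrak{p}}$, then lifts $\mathfrak{p}$ to a point $\xi \in X$ at which $f^*\alpha \nmid f^*\beta$. Properness of $f_T$ further guarantees $f_T(\overline{\{\xi\}}) = \overline{\{\mathfrak{p}\}}$, which contains the closed point of $T$; choosing any specialization $x \in \overline{\{\xi\}}$ above the closed point, non-divisibility descends from the generalization $\xi$ to $x$ (a function not regular at $\xi$ cannot be regular at $x$), and hence to the image $\bar{s'}$ of $x$ in $S'$ by functoriality of the map $\ca{O}_{S',\bar{s'}}^{sh} \ra \ca{O}_{X,\bar{x}}^{sh}$.

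The main obstacle is the backward direction, and within it the reduction to a setting where divisibility can be tested by a discrete valuation. In the stated generality the ring $\ca{O}_{S,\bar{s}}^{sh}$ need not be a normal domain, so the passage to an irreducible component of $T$ and its normalization requires care: one must ensure that $\alpha \nmid \beta$ survives these reductions and that the base change of $f_T$ remains proper with scheme-theoretically dense image. Once this reduction is carried out, the valuative-criterion lift and specialization argument are routine.
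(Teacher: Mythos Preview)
Your forward direction is fine and matches the paper (it is Lemma~2.16 there). The backward direction, however, rests on a misreading of the alignment condition. In this paper a labelled graph is aligned when, for every pair of edges $e,e'$ in a common 2-vertex-connected subgraph, there exist positive integers $n,n'$ with $\ell(e)^{n}=\ell(e')^{n'}$ as principal ideals. Failure of alignment is therefore \emph{not} witnessed by a single non-divisibility $\alpha\nmid\beta$; it is the statement that no power of $(\alpha)$ equals any power of $(\beta)$. Your valuation argument, even granting all the reductions, produces a point upstairs where one specific non-divisibility persists, but at that point a relation $(f^*\alpha)^n=(f^*\beta)^m$ can perfectly well hold --- indeed, over a DVR the pullbacks automatically satisfy such a relation (take $n=v(\beta)$, $m=v(\alpha)$). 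So the method cannot show non-alignment upstairs.

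The deeper obstacle is that alignment of $f^*C/S'$ only says that at each geometric point $\bar{s}'$ of $S'$ \emph{some} relation $(f^*\alpha)^{n(\bar{s}')}=(f^*\beta)^{m(\bar{s}')}$ holds, with exponents depending on the point; to conclude alignment of $C/S$ you need a single pair $(n,m)$ that works downstairs. The paper handles this directly rather than by contrapositive: after reducing to $S$ strictly henselian, it uses Stein factorisation to see that the closed fibre $S'_s$ is connected, hence so is $\on{Supp}f^*D\cup\on{Supp}f^*E$ for the two edge-labels $D,E$ in question. A local--global lemma for relations between Cartier divisors (Lemma~2.15) then upgrades the pointwise relations on $S'$ to a single global one $m\,f^*D=n\,f^*E$, and an injectivity lemma for pullback of Cartier divisors along proper maps with dense image (Lemmas~2.18--2.19) descends this to $mD=nE$ on $S$. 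This local-to-global patching is the real content of the backward direction, and your contrapositive-via-one-valuation approach does not address it.
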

Combining with \cref{thm:intro_NM} we see that a non-aligned semistable jacobian does not admit a N\'eron model even after blowing up or altering the base scheme. 

For many applications, rather than needing the full strength of the N\'eron mapping property, one only needs to extend (a multiple of) a single section of the jacobian $J$ to some semiabelian scheme over $S$ (maybe after an alteration of $S$).  However, it turns out that this does not make the problem easier; in general extending a section in this way is as hard as constructing a N\'eron model. We illustrate this in \cref{sec:counterexample} with an example.



\subsection{Idea of the proof: N\'eron models via the relative Picard functor}
Our existence and non-existence results for the N\'eron model proceed via an auxiliary object, the `total-degree-zero relative Picard functor' $\on{Pic}_{C/S}^{[0]}$ (see \cref{sec:pic}), a group-algebraic-space. The base-change of $\on{Pic}_{C/S}^{[0]}$ to the locus $U \sub S$ where $C$ is smooth coincides with the Jacobian of $C_U/U$. It is also rather easy to show that  $\on{Pic}_{C/S}^{[0]}$ satisfies the `existence' part of the N\'eron mapping property whenever $C$ is regular. However, if $C/S$ has non-irreducible fibres then $\on{Pic}_{C/S}^{[0]}$ is in general highly non-separated, and for this reason it fails to satisfy the `uniqueness' part of the N\'eron mapping property. As such, we wish to construct some kind of `separated quotient' of $\on{Pic}_{C/S}^{[0]}$, in the hope that this will satisfy the whole N\'eron mapping property. 

Now the failure of separatedness in a group-space is measured by the failure of the unit section to be a closed immersion; as such, a natural way to construct a `separated quotient' of $\on{Pic}_{C/S}^{[0]}$ is to quotient by the closure of the unit section. Quotients by \emph{flat} subgroup-spaces always exist in the category of algebraic spaces, but the problem is that the closure of the unit section in $\on{Pic}_{C/S}^{[0]}$ (which we shall refer to as $\bar{e}$) is not in general flat over $S$, and so this quotient does not exist\footnote{One might be tempted to apply \cite{Raynaud1971Criteres-de-pla} to flatten $\bar{e}$ by blowing up $S$, but note that $\on{Pic}_{C/S}^{[0]}$ is not quasi-compact over $S$, and so their results do not apply; indeed, one consequence of our results is that, if $C/S$ is not aligned, then $\bar{e}$ does not become flat after \emph{any} modification of $S$. }. By a slightly more delicate argument, one can even show that the existence of a N\'eron model is \emph{equivalent} to the flatness of $\bar{e}$. The technical heart of this paper is the proof that the flatness of $\bar{e}$ over $S$ is equivalent to the alignment of $C/S$ (our combinatorial condition introduced in \cref{sec:definition_of_balance}); this is carried out in \cref{sec:Cartier_classification} and \cref{sec:balance_equivalent_to_flatness}. 

\subsection{Applications}
Here we briefly mention four applications of the theory developed in this paper. 

\textbf{Height jumping}

\noindent In \cite{Hain2013Normal-function}, Richard Hain defined an invariant, the height jump, associated to a morphism of variations of mixed Hodge structures, and conjectured that in certain cases this invariant should be non-negative. In \cite{David-Holmes2014Neron-models-an} we use the theory of N\'eron models and labelled graphs developed in this paper, along with a study of resistances in electrical networks, to prove some cases of this conjecture. We use the same techniques to extend the definition of the height jump to an algebraic setting. For 1-dimensional families of abelian varieties we show also in \cite{David-Holmes2014Neron-models-an} that the algebraic height jump is actually \emph{bounded}, and we use this to give a new proof of a theorem of Silverman and Tate (\cite{Silverman1983Heights-and-the}, \cite{Tate1983Variation-of-th}). 

\textbf{Bounds on orders of torsion points} 

\noindent In \cite{Holmes2014A-note-on-Neron} we connect the existence of N\'eron models for families of jacobians to the question of bounding the orders of rational torsion points on abelian varieties, extending the work on algebraic height jumping in \cite{David-Holmes2014Neron-models-an}. 

\textbf{The case of universal curves}

\noindent The universal stable pointed curve over the Deligne-Mumford-Knudsen compactification $\overline{\ca{M}}_{g,n}$ of the moduli stack of pointed curves is never aligned unless $g=0$ or $n=g=1$. As such, a N\'eron model of the universal jacobian does not in general exist. In \cite{Holmes2014A-Neron-model-o}, we construct a morphism $\beta\colon \widetilde{\ca{M}}_{g,n} \ra \overline{\ca{M}}_{g,n}$ over which a N\'eron model of the universal jacobian does exist, and such that $\beta$ is universal among morphisms with this property. 

\textbf{Connection to logarithmic geometry}

\noindent In \cite{Bellardini2015On-the-Log-Pica}, Bellardini adapts the definition of alignment to the setting of logarithmic geometry, and shows that a family of curves is aligned if and only if it is log cohomologically flat and the logarithmic Picard functor is separated. In particular, this gives a way to verify in examples that a family of curves is log cohomologically flat.

\subsection{Acknowledgements}
The author would like to thank Jos\'e Burgos Gil for pointing out a serious mistake in a previous iteration of this work, to thank Hendrik Lenstra for supplying some of the ideas behind the proof of the main result of \cref{sec:Cartier_classification}, and to thank Owen Biesel, Raymond van Bommel, K\k{e}stutis \v{C}esnavi\v{c}ius, Bas Edixhoven, Ariyan Javanpeykar, Robin de Jong, Qing Liu and Morihiko Saito for helpful comments and discussions. The author is also extremely grateful to two anonymous referees. The first referee produced two very thorough and rapid reports highlighting several serious mistakes in earlier versions of this paper. As well as pointing out several additional issues, the second referee pointed out several places where proofs could be improved and streamlined. Most dramatically, the second referee provided a new proof of \cref{CDthm:main}, based on the original one but about half as a long and far more readable, and kindly allowed this new proof to be included in this paper. Both of these contributions have dramatically improved the correctness and readability of this article. Finally, the author would like to thank the editor Daniel Huybrechts for his patience and encouragement with the numerous iterations of this paper. 

\section{Definition and basic properties of aligned curves}\label{sec:definition_of_balance}
\newcommand{\mono}{\textbf{Mon}_0}


\begin{definition}
A (finite) graph is a triple $(V,E,\on{ends})$ where $V$ and $E$ are finite sets (vertices and edges), and $\on{ends}\colon E \ra (V\times V)/\on{S}_2$ is a function, which we think of as assigning to each edge the unordered pair of its endpoints. A \emph{loop} is an edge whose endpoints are the same. A \emph{circuit} is a path of positive length which starts and ends at the same vertex, and which does not repeat any edges or any other vertices. 

Let $L$ be a monoid or set, and $\Gamma$ a graph. An \emph{edge-labelling} of $\Gamma$ by $L$ is a function $\ell$ from the set of edges of $\Gamma$ to $L$. We call the pair $(\Gamma, \ell)$ a \emph{graph with edges labelled by $L$}.

We say a graph is \emph{2-vertex-connected} if it is connected and remains connected after deleting any one vertex. When we delete a vertex, we also delete all edges to it. The empty graph is not connected. 
\end{definition}

\begin{definition}
Let $L$ be a monoid, and $(H, \ell)$ a 2-vertex-connected graph labelled by $L$. We say $(H, \ell)$ is \emph{aligned} if for all pairs of edges $e$, $e'$ there exist positive integers $n$, $n'$ such that 
$$\ell(e)^{n} = \ell(e')^{n'}. $$ In other words, non-trivial relations should hold between the labels of edges in $H$. 

Let $L$ be a monoid, and $(G, \ell)$ a graph labelled by $L$. We say $G$ is \emph{aligned} if every 2-vertex connected subgraph\footnote{I.e. a subset $E' \sub E$ and $V' \sub V$ such that for all $e \in E'$, we have that $\on{ends}(e)$ lands in $V'$. } of $G$ is aligned (equivalently, if every circuit in $G$ is aligned). 
\end{definition}


\subsection{Local structure of semistable curves}

\begin{definition}
Let $k$ be a separably closed field. A \emph{curve over $k$} is a proper morphism $\pi\colon C \ra \on{Spec} k$ such that every irreducible component of $C/k$ is of dimension $1$. The curve $C/k$ is called \emph{semistable} if it is connected and, for every point $c \in C$, either $C \ra k$ is smooth at $c$, or $c$ has completed local ring isomorphic to $k[[x,y]]/(xy)$ (i.e. only ordinary double point singularities). 

Let $S$ be a scheme. A \emph{(semistable) curve over $S$} is a proper flat finitely presented morphism $C \ra S$, all of whose fibres over points with values in separably closed fields are (semistable) curves. 
\end{definition}

\begin{remark}\label{rem:cohom_flatness}
\begin{enumerate}
\item
In the classical definition of a semistable curve, one works with fibres over algebraically closed fields, not separably closed fields. This makes our definition appear more restrictive, but it is in fact equivalent to the classical definition, see \cite[10.3.7]{Liu2002Algebraic-geome}. 
\item Let $f\colon C \ra S$ be a semistable curve. Since $f$ is flat and has reduced geometric fibres, it is cohomologically flat in dimension 0 (\cite[7.8.6]{Grothendieck1963EGAIII.2} and reduction to the noetherian case). 
\end{enumerate}
\end{remark}

\begin{proposition}\label{local_rings_on_semistable}
Let $S$ be a locally noetherian scheme, $C/S$ a semistable curve, $s$ a geometric point of $S$, and $c$ a non-smooth geometric point of $C$ lying over $s$.  We have:
\begin{enumerate}
\item
There exists an element $\alpha$ in the maximal ideal of the completed \'etale local ring $ \widehat{\ca{O}\et_{S,s}}$ and an isomorphism of complete local rings
$$ \frac{ \widehat{{\ca{O}}\et}_{S,s}[[x,y]]}{(xy-\alpha)} \ra \widehat{\ca{O}\et}_{C,c} . $$
\item The element $\alpha$ is not in general unique, but the ideal $\alpha \widehat{\ca{O}\et_{S,s}} \triangleleft \widehat{\ca{O}\et_{S,s}}$ is unique. Moreover this ideal comes via base-change from a unique principal ideal of $\ca{O}\et_{S,s}$, which we call the \emph{singular ideal of $c$}.
\item Suppose moreover that $C$ is smooth over a schematically dense open $U \hra S$. Let $\alpha \in \ca{O}\et_{S,s}$ be a generator of the singular ideal. Then there is an isomorphism 
$$ \frac{ \widehat{{\ca{O}}\et}_{S,s}[[x,y]]}{(xy-\alpha)} \ra \widehat{\ca{O}\et}_{C,c} . $$
\end{enumerate}
\end{proposition}
\begin{proof}
Write $(A, \frak{m}_A) = \ca{O}\et_{S,s}$ and $(B, \frak{m}_B) = \ca{O}\et_{C,c}$, and $\rho:A \ra B$ for the canonical map. 

\begin{enumerate}
\item
From the definition of a semistable curve, we know that there is an isomorphism 
$$ \frac{(\hat{A}/\frak{m}_{\hat{A}}\hat{A})[[x,y]]}{(xy)}  \ra \hat{B}/\frak{m}_{\hat{A}}\hat{B} . $$
Let $u$, $v\in\hat{B}$ be any lifts of $x$ and $y$ respectively. We see that $uv \in \frak{m}_{\hat{A}} \hat{B}$, and that $\frak{m}_{\hat{B}} = u\hat{B} + v\hat{B} + \frak{m}_{\hat{A}}\hat{B}$. We now apply \cite[lemma 10.3.20]{Liu2002Algebraic-geome} to find an element $\alpha \in \frak{m}_{\hat{A}}$ and an isomorphism
$$\frac{\hat{A}[[x,y]]}{(xy-\alpha)} \ra \hat{B}  .$$ 
\item The uniqueness part is immediate from \cref{lem:unique_alpha}. For the `moreover' part, define 
$$F := \on{Fit}_{1}(\Omega^1_{B/A} / B). $$
The morphism $A \ra B/F$ is unramified and $B/F$ is local, so by \cite[\href{http://stacks.math.columbia.edu/tag/04GL}{Tag 04GL}]{stacks-project} we find that $B/F = A/I$ for some ideal $I \triangleleft A$, which will be the singular ideal. Write $\alpha \in\hat{A}$ for an element with 
$$ \frac{ \widehat{{\ca{O}}\et}_{S,s}[[x,y]]}{(xy-\alpha)} \stackrel{\sim}{\ra} \widehat{\ca{O}\et}_{C,c}. $$
I claim now that $I\hat{A} = \alpha\hat{A}$ as $\hat{A}$-modules. We establish the claim in two steps:
\begin{itemize}
\item[Step 1:] We have $\frac{\hat{B}}{F\hat{B}} = \frac{B}{F}\otimes_B \hat{B}$ which is the completion of the local ring $\frac{B}{F}$ with respect to its maximal ideal. Similarly $\frac{\hat{A}}{I\hat{A}} = \frac{A}{I}\otimes_A \hat{A}$ is the completion of the local ring $\frac{A}{I}$ with respect to its maximal ideal. Hence $\frac{\hat{B}}{F\hat{B}} = \frac{\hat{A}}{I\hat{A}}$ as $\hat{A}$-modules. 
\item[Step 2:] Formation of the relative differentials and of the Fitting ideal commutes with completions, so an easy calculation gives that $F \otimes_B \hat{B} = (x,y)\hat{B}$, and hence
\begin{equation*}
\frac{\hat{B}}{F\hat{B}} = \frac{\hat{B}}{(x,y)\hat{B}} = \frac{\hat{A}}{\alpha\hat{A}}
\end{equation*}
as $\hat{A}$-modules. Combining with step 1 we find that $\frac{\hat{A}}{I\hat{A}} = \frac{\hat{A}}{\alpha\hat{A}}$, and hence $I\hat{A} = \alpha\hat{A}$ as required. 
\end{itemize}
This shows that $I\hat{A} = \alpha\hat{A}$ is a principal ideal of $\hat{A}$. Since $\hat{A}$ is faithfully flat over $A$, we deduce by \cref{prop:descent_of_cartier} that $I$ is a principal ideal of $A$. The uniqueness comes again from faithful flatness.

\item We let $\alpha$ be as in part 1, and we choose $\alpha'$ to be a generator of the singular ideal $I$ defined in the proof of part 2. Then $\alpha$ and $\alpha'$ generate the same principal ideal of $\hat{A}$. By \cref{lem:non_zero_divisor} we have that $\alpha$ is not a zero-divisor, and so we deduce that $\alpha$ and $\alpha'$ differ by multiplication by a unit in $\hat{A}$. We can then easily write down an isomorphism
$$\frac{\hat{A}[[x,y]]}{(xy-\alpha)} \stackrel{\sim}{\ra} \frac{\hat{A}[[x,y]]}{(xy-\alpha')}, $$
so we get an isomorphism  
$$\frac{\hat{A}[[x,y]]}{(xy-\alpha')} \stackrel{\sim}{\ra} \hat{B}.$$ 
%
\end{enumerate}
\end{proof}
\begin{remark}
A similar argument shows that if $C/S$ is smooth over a schematically dense open, then given a point $s \in S$, a finite separable extension $k$ of the residue field of $s$, and $c$ a $k$-valued point of $\on{Sing}(C/S)$ lying over $s$, we may take the singular ideal of $c$ to be generated by an element in the unique finite \'etale extension of the henselisation of $\ca{O}_{S,s}$ whose residue field is $k$. 
\end{remark}

\begin{remark}
If $S$ is not assumed to be locally noetherian, one can define the singular ideal as the pullback of the fitting ideal of the sheaf of relative differentials (the ideal $F$ from the proof of \cref{local_rings_on_semistable}). One can then show that this ideal is principal by reducing to the noetherian case. However, most of our results will require that $S$ is even regular, so there seems little to be gained from such additional generality at this stage. 
\end{remark}

\begin{lemma}\label{lem:unique_alpha}
Let $(A, \frak{m}_A)$ be a local ring, $\alpha$, $\beta \in \frak{m}_A$ elements, and 
$$\phi: \frac{A[[x,y]]}{(xy-\alpha)} \ra \frac{A[[s,t]]}{(st-\beta)}$$
an isomorphism of $A$-algebras. Then  $\alpha A = \beta A$. 
\end{lemma}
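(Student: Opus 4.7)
The plan is to exhibit $\alpha A$ as a canonical invariant of $B := A[[x,y]]/(xy-\alpha)$ viewed purely as an $A$-algebra, so that any $A$-algebra isomorphism with $B' := A[[s,t]]/(st-\beta)$ automatically forces $\alpha A = \beta A$. The invariant I have in mind is the scheme-theoretic image in $\on{Spec} A$ of the non-smooth locus of $\on{Spec} B \to \on{Spec} A$, made precise via Fitting ideals of continuous K\"ahler differentials, in the same spirit as the argument used in the proof of \cref{local_rings_on_semistable}.

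First I would note that $\phi$ is necessarily a local map of local rings (since isomorphisms send the maximal ideal to the maximal ideal), hence continuous for the $\frak{m}$-adic topology, and so it induces an isomorphism on continuous K\"ahler differentials. Differentiating the defining relation $xy=\alpha$ provides the finite presentation
$$ B \xrightarrow{\ 1\mapsto (y,\,x)\ } B \oplus B \lra \hat{\Omega}^1_{B/A} \lra 0, $$
so the first Fitting ideal is $\on{Fit}_1(\hat{\Omega}^1_{B/A}) = (x,y)\cdot B$, and symmetrically $\on{Fit}_1(\hat{\Omega}^1_{B'/A}) = (s,t)\cdot B'$. A direct computation then gives the $A$-algebra identifications $B/(x,y) = A[[x,y]]/(x,y,xy-\alpha) = A/\alpha A$ and $B'/(s,t) = A/\beta A$, so the kernels of the natural $A$-algebra maps $A\to B/\on{Fit}_1(\hat{\Omega}^1_{B/A})$ and $A\to B'/\on{Fit}_1(\hat{\Omega}^1_{B'/A})$ are exactly $\alpha A$ and $\beta A$ respectively.

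Since Fitting ideals transport along ring isomorphisms and the $A$-algebra structure on the quotient is inherited canonically, $\phi$ descends to an $A$-algebra isomorphism $A/\alpha A \xrightarrow{\sim} A/\beta A$, which forces $\alpha A = \beta A$. The only point requiring a little care, rather than a genuine obstacle, is the use of \emph{continuous} rather than ordinary K\"ahler differentials (the power series ring is not finitely generated as an ordinary $A$-algebra, so the usual $\Omega^1_{B/A}$ is not the expected two-generator module); this is why I first verify the continuity of $\phi$.
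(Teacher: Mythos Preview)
Your argument is correct and follows essentially the same route as the paper: compute the first Fitting ideal of the module of relative differentials of $B$ over $A$, identify the quotient $B/\on{Fit}_1$ with $A/\alpha A$, and conclude by transporting this invariant across the isomorphism $\phi$. The one point where you are more careful than the paper is in working with continuous K\"ahler differentials and checking that $\phi$ respects the $\frak{m}$-adic topology; the paper simply writes $\Omega^1_{R/A}$ without commenting on this, so your version is if anything a mild improvement in precision.
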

\begin{proof}
Write 
$$R \colon = \frac{A[[x,y]]}{(xy-\alpha)} \;\; \text{ and }\;\; R' \colon = \frac{A[[s,t]]}{(st-\beta)}. $$
The first fitting ideal of the module of continuous relative differentials $\Omega_{R/A}$ is the ideal $(x,y) \triangleleft R$, and similarly
$$\on{Fit}_1(\Omega^1_{R'/A}/R') = (s,t) \triangleleft R'. $$ 
Since $R $ and $R'$ are by assumption isomorphic as $A$ modules, the same is true of the quotients
$$R/\on{Fit}_1(\Omega^1_{R/A}/R) \;\; \text{ and }\;\; R'/\on{Fit}_1(\Omega^1_{R'/A}/R'), $$
so we see that 
$$A/\alpha = R/(x,y) \;\; \text{ and }\;\; A/\beta = R'/(s,t)$$
are isomorphic as $A$-modules, hence $\alpha A = \beta A$. 
\end{proof}

\begin{lemma}\label{prop:descent_of_cartier}
Let $\phi\colon S' \ra S$ be a flat morphism of schemes, and $s' \in S'$ be a point lying over some $s \in S$. Let $D \tra S$ be a closed subscheme. Then the ideal sheaf of $D$ can be generated near $s$ by a single (regular) element if and only if the ideal sheaf of the pullback $D \times_S S' \tra S'$ can be generated near $s'$ by a single (regular) element
\end{lemma}
\begin{proof}
This is well-known and is omitted. 
\end{proof}


\begin{lemma}\label{lem:non_zero_divisor}
Let $\pi\colon C\ra S$ be semistable, and assume that $C \ra S$ is smooth over some schematically dense open subscheme $U \sub S$. Let $k$ be a separably closed field, and $c \in C(k)$ be a point lying over a point $s \in S(k)$ such that $c$ is not in the smooth locus of $C \ra S$. Suppose we have an element $\alpha \in \widehat{{\ca{O}}\et}_{S,s}$ and an isomorphism
$$ \frac{ \widehat{{\ca{O}}\et}_{S,s}[[x,y]]}{(xy-\alpha)} \ra \widehat{\ca{O}\et}_{C,c} . $$
Then $\alpha$ is a non zero-divisor in $\widehat{\ca{O}\et_{S,s}}$. 
\end{lemma}
\begin{proof}
We may assume that $S = \on{Spec}R $ with $R$ complete and strictly local; $U$ remains schematically dense by \cite[th\'eor\`eme 11.10.5 (ii)]{Grothendieck1966EGA.IV.3}. Let $Z$ be the closed subscheme of $S$ cut out by $\alpha$. From our smoothness assumption, it follows that $Z \times_S U$ is empty. 

Now we will assume that $\alpha$ is a zero-divisor in $R$, and derive a contradiction by showing that $Z \times_S U$ is \emph{non} empty. First, note that $Z$ contains an associated prime of $R$; indeed, the union of the associated primes of $R$ is exactly equal to the set of zero-divisors in $R$. However, I claim that $U$ must contain every associated prime of $R$. To see this, note that by \cite[\href{http://stacks.math.columbia.edu/tag/083P}{Tag 083P}]{stacks-project} $U$ is dense in $S$ and contains every embedded prime, hence it contains every associated prime (since every associated prime is either embedded, or is the generic point of an irreducible component). 
%
%
\end{proof}

\subsection{The definition of an aligned curve}


The most important definition in this paper is the following:
\begin{definition} \label{def:balance}
Let $S$ be a locally noetherian scheme and $C/S$ a semistable curve. Let $s \in S$ be a geometric point, and write $\ca{O}\et_{S,s}$ for the \'etale local ring of $S$ at $s$. Write $\Gamma$ for the dual graph\footnote{ Defined as in \cite[10.3.17]{Liu2002Algebraic-geome}. } of the fibre $C_s$.  Let $L_s$ be the monoid of principal ideals of $\ca{O}\et_{S,s}$. We label $\Gamma$ by elements of $L_s$ by assigning to an edge $e \in \Gamma$ the singular ideal $l \in L_s$ of the singular point of $C_s$ associated to $e$ (cf. \cref{local_rings_on_semistable}). 

We say \emph{$C/S$ is aligned at $s$} if and only if this labelled graph is aligned. We say \emph{$C/S$ is aligned} if it is aligned at $s$ for every geometric point $s$ of $S$. 
\end{definition}

\begin{remark}
\begin{enumerate}
\item Let $f\colon T \ra S$ be any morphism. Let $t$ be a geometric point of $T$, lying over a geometric point $s$ of $S$. Then the labelled graph $\Gamma_t$ of $C_T$ over $t$ has the same underlying graph as that of $C$ over $s$, and the labels on $\Gamma_t$ are obtained by pulling back those on $\Gamma_s$ along $f$. One can see this for example by using the construction of the singular ideal in terms of the fitting ideal of the sheaf of relative differentials, whose formation is well-behaved under base-change. 
\item
The property of `being aligned' is fppf-local on the target, i.e. it is preserved under flat base-change and satisfies fppf descent.  
\item
Given $S$ excellent, integral and regular in codimension 1, and $C/S$ semistable and generically smooth, one sees easily that there exists an open subscheme $V \sub S$ such that $C_V/V$ is aligned and such that the closed subscheme $S-V$ has codimension at least 2 in $S$.  
\end{enumerate}
\end{remark}

\subsection{Examples of aligned and non-aligned curves}\label{sec:basic_examples}
Let $S = \on{Spec}\bb{C}[[s,t]]$. We give two semistable curves $C_1$, $C_2$ over $S$, with the same closed fibre, and with $C_1$ aligned and $C_2$ non-aligned. 

\begin{example}[The aligned curve $C_1$]
We define $C_1$ to be the $S$-scheme cut out in weighted projective space $\bb{P}_S(1,1,2)$ (with affine coordinates $x$, $y$) by the equation
$$y^2 = ((x-1)^2-t)((x+1)^2+t). $$
This is naturally the pullback of the curve over $\on{Spec}\bb{C}[[t]]$ defined by the same equation inside $\bb{P}_{\bb{C}[[t]]}(1,1,2)$ along the natural map $S \ra \on{Spec}\bb{C}[[t]]$. The closed fibre is a 2-gon, and the labelled dual graph is 

\begin{tikzpicture}[line cap=round,line join=round,>=triangle 45,x=1.0cm,y=1.0cm]
\clip(-9,0) rectangle (3.0,4);
\draw [shift={(0.0,2.0)}] plot[domain=2.356194490192345:3.9269908169872414,variable=\t]({1.0*2.8284271247461903*cos(\t r)+-0.0*2.8284271247461903*sin(\t r)},{0.0*2.8284271247461903*cos(\t r)+1.0*2.8284271247461903*sin(\t r)});
\draw [shift={(-4.0,2.0)}] plot[domain=-0.7853981633974483:0.7853981633974483,variable=\t]({1.0*2.8284271247461903*cos(\t r)+-0.0*2.8284271247461903*sin(\t r)},{0.0*2.8284271247461903*cos(\t r)+1.0*2.8284271247461903*sin(\t r)});
\begin{scriptsize}
\draw[color=black] (-2.609524887461456,2.2251621195229765) node {(t)};
\draw[color=black] (-0.9546805806813534,2.2251621195229765) node {(t)};
\end{scriptsize}
\end{tikzpicture}

The relation $(t) = (t)$ holds, implying that $C_1/S$ is aligned (there is only one circuit in the dual graph, with only two edges, so there is only one condition to check). 
\end{example}

\begin{example}[The non-aligned curve $C_2$]
We define $C_2$ to be the $S$-scheme cut out in weighted projective space $\bb{P}_S(1,1,2)$ (with affine coordinates $x$, $y$) by the equation
$$y^2 = ((x-1)^2-s)((x+1)^2+t). $$
This does not arise as the pullback of a curve over any trait. The closed fibre is a 2-gon, and the labelled dual graph is 

\begin{tikzpicture}[line cap=round,line join=round,>=triangle 45,x=1.0cm,y=1.0cm]
\clip(-9,0) rectangle (3.0598491265074137,4);
\draw [shift={(0.0,2.0)}] plot[domain=2.356194490192345:3.9269908169872414,variable=\t]({1.0*2.8284271247461903*cos(\t r)+-0.0*2.8284271247461903*sin(\t r)},{0.0*2.8284271247461903*cos(\t r)+1.0*2.8284271247461903*sin(\t r)});
\draw [shift={(-4.0,2.0)}] plot[domain=-0.7853981633974483:0.7853981633974483,variable=\t]({1.0*2.8284271247461903*cos(\t r)+-0.0*2.8284271247461903*sin(\t r)},{0.0*2.8284271247461903*cos(\t r)+1.0*2.8284271247461903*sin(\t r)});
\begin{scriptsize}
\draw[color=black] (-2.609524887461456,2.2251621195229765) node {(s)};
\draw[color=black] (-0.9546805806813534,2.2251621195229765) node {(t)};
\end{scriptsize}
\end{tikzpicture}

There do not exist positive integers $m$ and $n$ such that the relation $(s)^m = (t)^n$ holds. As $(s)$ and $(t)$ appear on the same circuit, this implies that the curve is not aligned. 
\end{example}

\begin{remark}
Let $C/S$ be a semistable curve, and let $s$ be a geometric point of $S$. Suppose firstly that the fibre over $s$ is
2-vertex-connected (for example an $n$-gon for some $n$). Then $C/S$ being
aligned at $s$ means roughly that $C/S$ looks (locally at $s$) as if it
is the pullback of a semistable curve over some trait (this is only a heuristic, and cannot be used as a substitute definition). Suppose on the other hand that $C/S$ is of compact type (i.e. the dual graphs of all geometric fibres are trees, or equivalently the jacobian $\on{Pic}^0_{C/S}$ is an abelian scheme); then $C/S$ is automatically aligned. In both of these situations, it seems perhaps plausible that
a N\'eron model can exist. In general, being aligned can be thought of as some kind of common
generalisation of these two situations.
\end{remark}

\subsection{Local-global for relations between divisors}

This section contains various lemmas we will need for proving that both being aligned and being non-aligned are preserved under alterations (\cref{prop:persistence}). 

\begin{lemma}\label{lem:noeth_path_connected}
Let $X$ be a connected noetherian scheme, and $p$, $q \in X$. Then there exist points $x_1, \cdots, x_n \in X$ with $x_1 = p$, $x_n = q$, and for all $i$, either $x_i \in \overline{\{x_{i+1}\}}$ or $x_{i+1} \in \overline{\{x_{i}\}}$. 
\end{lemma}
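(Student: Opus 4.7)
The plan is to define an equivalence relation $\sim$ on $X$ by setting $p \sim q$ if and only if a chain of the stated form exists. Reflexivity (taking $n=1$) and symmetry (reversing the chain) are trivial, and transitivity follows by concatenating chains. The goal is then to show there is only one equivalence class.

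First I would reduce to the case of generic points of irreducible components. For any $x \in X$, choose any irreducible component $Z$ containing $x$ with generic point $\eta_Z$; then $x \in \overline{\{\eta_Z\}}$, so $x \sim \eta_Z$ directly from the chain of length $2$. Thus every point is equivalent to the generic point of some irreducible component containing it, and in particular it suffices to show that the generic points $\eta_1, \dots, \eta_m$ of the (finitely many, by noetherianness) irreducible components $Z_1, \dots, Z_m$ of $X$ are all $\sim$-equivalent. Observe further that if $Z_i \cap Z_j \neq \emptyset$, then picking any $x \in Z_i \cap Z_j$ gives $\eta_i \sim x \sim \eta_j$.

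Next I would use connectedness of $X$ to conclude that the combinatorial graph $G$ on the set $\{1, \dots, m\}$, with an edge between $i$ and $j$ whenever $Z_i \cap Z_j \neq \emptyset$, is connected. Suppose for contradiction that $G$ decomposes as a disjoint union of non-empty vertex sets $A \sqcup B$ with no edges between them. Then
\[
 X_A := \bigcup_{i \in A} Z_i \quad\text{and}\quad X_B := \bigcup_{i \in B} Z_i
\]
are each closed in $X$ (as finite unions of closed sets), their union is $X$, and they are disjoint by the assumption that no $Z_i$ with $i \in A$ meets any $Z_j$ with $j \in B$. This contradicts the connectedness of $X$, so $G$ must be connected.

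Combining these steps: connectedness of $G$, together with the observation that intersecting irreducible components have equivalent generic points, shows that all the $\eta_i$ lie in a single $\sim$-equivalence class. Since every point of $X$ is equivalent to some $\eta_i$, there is only one equivalence class, and in particular $p \sim q$. The main (minor) subtlety is the reduction to the finite combinatorial statement about irreducible components; once one notes that noetherianness gives finiteness of components and hence that the partitioned unions remain closed, the topological argument is routine.
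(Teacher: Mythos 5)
Your proof is correct and follows essentially the same route as the paper: the paper's proof is a one-liner that notes noetherianness gives finitely many irreducible components and then cites \cite[Tag 0904]{stacks-project}, which is precisely the statement (connectedness of $X$ is equivalent to connectedness of the intersection graph of its irreducible components) that you prove from scratch. Your write-up is just a self-contained expansion of that citation, and all the steps check out.
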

\begin{proof}
Since $X$ is noetherian it has finitely many irreducible components, so we are done by \cite[Tag 0904]{stacks-project}. 
\end{proof}

 We say a monoid $M$ is \emph{cyclic} if it can be generated by a single element. 

\begin{definition}
Let $\frak{S}$ denote the set of submonoids of the monoid\footnote{For us $\bb{N}_0 \defeq \{n \in \bb{Z} | n \ge 0\}. $ } $\bb{N}_0 \times \bb{N}_0$. Given an element $M \in \frak{S}$, we define the \emph{saturation} of $M$ to be the set of all $m \in \bb{N}_0 \times \bb{N}_0$ such that $am \in M$ for some $a \in \bb{N}_{>0}$. We remark that if $M$ is cyclic then so is its saturation. 

Given a scheme $X$ and two effective Cartier divisors $D$, $E$ on $X$, we define a function
\begin{equation}
\begin{split}
\zeta = \zeta_{D,E} \colon X & \ra \frak{S} \\
x & \mapsto \{(m,n) | mD = nE \text{ locally at } x\}. 
\end{split}
\end{equation}
Note that a relation holds Zariski-locally if and only if it holds fppf-locally. 
\end{definition}


\begin{lemma}\label{prop:inclusion}
If $x \in \overline{\{y\}}$ then $\zeta(x) \sub \zeta(y)$. 
\end{lemma}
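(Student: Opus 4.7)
The plan is short, because the content of the lemma is essentially just the standard fact that if $y$ is a generalization of $x$ then every open neighbourhood of $x$ contains $y$.

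First I would unwind the definition: an element $(m,n) \in \zeta(x)$ is, by definition, a pair such that the equality $mD = nE$ of Cartier divisors holds on some open neighbourhood $U$ of $x$. So the task reduces to showing that such a $U$ automatically contains $y$.

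For this I would use the characterisation of the closure of a point: $x \in \overline{\{y\}}$ if and only if every open subset of $X$ containing $x$ also contains $y$ (equivalently, $y$ is a generalisation of $x$). Applied to our open neighbourhood $U$, this gives $y \in U$, and hence the relation $mD = nE$ holds on the open neighbourhood $U$ of $y$ as well. Therefore $(m,n) \in \zeta(y)$, proving $\zeta(x) \subseteq \zeta(y)$.

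There is no real obstacle here — the only thing to be mildly careful about is to distinguish "holding locally at a point" (existence of an open neighbourhood on which the equality holds) from "holding at a point" in any stalk-theoretic sense; with the local-on-an-open-set reading the argument is immediate, and this is the reading used in the definition of $\zeta$. I would not even need to invoke the parenthetical remark in the definition that a relation holds Zariski-locally iff it holds fppf-locally.
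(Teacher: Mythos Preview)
Your argument is correct and is essentially identical to the paper's own proof: pick an open neighbourhood $U$ of $x$ on which $mD=nE$ holds, observe that $x\in\overline{\{y\}}$ forces $y\in U$, and conclude $(m,n)\in\zeta(y)$. There is nothing to add.
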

\begin{proof}
Say $mD = nE$ holds locally at $x$. Then the same relation holds on some Zariski neighbourhood $U$ of $x$, and $U$ is also a Zariski neighbourhood of $y$. 
\end{proof}


\begin{lemma}\label{prop:cyclic}
Let $x \in X$ lie in the union of the supports of $D$ and $E$. Then the saturation of the monoid $\zeta(x)$ is cyclic. 
\end{lemma}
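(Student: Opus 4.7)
The approach is to reduce to a purely local question in the local ring $A \defeq \ca{O}_{X,x}$. Shrinking to a sufficiently small neighbourhood, $D$ and $E$ are cut out by non-zero-divisors $f, g \in A$, and a pair $(m,n)$ lies in $\zeta(x)$ iff $f^m$ and $g^n$ generate the same principal ideal of $A$; since $f, g$ are non-zero-divisors this is equivalent to the existence of a unit $u \in A^\times$ with $f^m = u g^n$. The hypothesis that $x$ lies in $\on{Supp}(D) \cup \on{Supp}(E)$ means at least one of $f, g$ is a non-unit; say without loss of generality $f \notin A^\times$. In particular any $(m,n) \in \zeta(x)\setminus\{(0,0)\}$ must have $n > 0$ (otherwise $f^m = u$ would force $f$ to be a unit); and as soon as any nontrivial element exists this in turn forces $g \notin A^\times$ as well.

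The first key step is a proportionality claim: given two nontrivial pairs $(m_1,n_1), (m_2,n_2) \in \zeta(x)$ with witnesses $f^{m_i} = u_i g^{n_i}$, raising to suitable powers gives $u_1^{m_2} g^{n_1 m_2} = f^{m_1 m_2} = u_2^{m_1} g^{n_2 m_1}$. Assuming $n_1 m_2 \ge n_2 m_1$ and cancelling $g^{n_2 m_1}$ (legitimate since $g$ is a non-zero-divisor) shows that $g^{n_1 m_2 - n_2 m_1}$ is a unit; since $g$ is a non-unit we conclude $n_1 m_2 = n_2 m_1$. Hence every nontrivial element of $\zeta(x)$ is a positive integer multiple of a common primitive pair $(a,b) \in \bb{N}_{>0}^2$ with $\gcd(a,b) = 1$.

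The second key step is to show $S \defeq \{k \in \bb{N}_0 : k(a,b) \in \zeta(x)\}$ is cyclic as a submonoid of $\bb{N}_0$. It is closed under addition by definition. For closure under the partial subtraction operation, suppose $k_1 > k_2$ both lie in $S$ with relations $f^{k_i a} = u_i g^{k_i b}$; substituting $g^{k_2 b} = u_2^{-1} f^{k_2 a}$ into the first relation and cancelling $f^{k_2 a}$ yields $f^{(k_1 - k_2) a} = u_1 u_2^{-1} g^{(k_1 - k_2) b}$, so $k_1 - k_2 \in S$. A standard Euclidean-algorithm argument then identifies $S$ with $d \bb{N}_0$ where $d$ is the smallest positive element of $S$, and hence $\zeta(x) = \langle (da, db)\rangle$ is cyclic, as desired. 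The main subtlety throughout is the careful bookkeeping that all cancellations happen in $A$ itself (which is not assumed to be a domain); here the hypothesis that $D$ and $E$ are \emph{effective Cartier} divisors enters crucially, since it guarantees that $f$ and $g$ are non-zero-divisors and so the cancellations used above are legitimate.
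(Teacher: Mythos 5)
Your proof is correct in substance but takes a genuinely different route from the paper's. The paper argues by pure monoid arithmetic, never choosing local equations: if $(m_1,n_1)$ and $(m_2,n_2)$ in $\zeta(x)$ are not proportional, then combining the two relations gives $(0,\,|m_2n_1-m_1n_2|)\in\zeta(x)$ with nonzero second entry, forcing $E=0$ locally at $x$, and symmetrically $D=0$ locally at $x$, contradicting the support hypothesis. You instead unwind everything into $\ca{O}_{X,x}$ and manipulate the equations $f^m=ug^n$ directly. This costs bookkeeping but buys something the paper's argument does not literally deliver: the paper only shows that all elements of $\zeta(x)$ are pairwise proportional, which a priori identifies $\zeta(x)$ with a submonoid of $\bb{N}_0$ but not a cyclic one (e.g.\ the submonoid generated by $2(a,b)$ and $3(a,b)$ is pairwise proportional yet not cyclic); your closure-under-truncated-subtraction step plus the Euclidean algorithm genuinely establishes cyclicity. (For the paper's later uses of the lemma, in \cref{prop:same_saturation} and \cref{local_global_for_relation}, proportionality plus nontriviality is all that is actually needed, so both arguments suffice downstream.)

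One small imprecision to fix: the claim that the existence of a nontrivial element of $\zeta(x)$ forces $g\notin A^\times$ is false when that element has the form $(0,n)$, which happens exactly when $x\in\on{Supp}(D)\setminus\on{Supp}(E)$; there $1=ug^n$ makes $g$ a unit. In that case the later cancellation "since $g$ is a non-unit we conclude $n_1m_2=n_2m_1$" has no force --- though it is also not needed, since then every nontrivial element has vanishing first coordinate and $\zeta(x)=\bb{N}_0\cdot(0,1)$ is cyclic outright (likewise your primitive pair need not lie in $\bb{N}_{>0}^2$ in the one-sided cases). Either dispose of the cases $x\notin\on{Supp}(E)$ and $x\notin\on{Supp}(D)$ separately before assuming both $f$ and $g$ are non-units, or observe that $m_1=m_2=0$ makes the proportionality trivial there.
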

\begin{proof}
Suppose not, and let $(m_1, n_1)$ and $(m_2, n_2)$ be elements of $\zeta(x)$ which are not contained in the same cyclic submonoid. Then (working locally at $x$) we have that $m_1D = n_1E$ and $m_2D = n_2E$, hence $n_1m_2E = m_1n_2E$. 

Now let $f\in\ca{O}_{X,x}$ be a defining equation for $E$. Because $E$ is Cartier $f$ cannot be a zero divisor. Because $n_1m_2E = m_1n_2E$ we have that 
\begin{equation*}
(f^{n_1m_2}) = (f^{m_1n_2})
\end{equation*}
as ideals of $\ca{O}_{X,x}$, and because $f$ is not a zero-divisor we deduce that there exist $u \in \ca{O}_{X,x}^\times$ such that $f^{n_1m_2} = uf^{m_1n_2}$. Therefore $f^{\abs{n_1m_2 - m_1n_2}} \in \ca{O}_{X,x}^\times$, so either $n_1m_2 = m_1n_2$ or $E$ is 0 at $x$. 

Applying the same argument to $D$, we find that either $n_1m_2 = m_1n_2$ or $D$ is 0 at $x$. Since $x$ is in the union of the supports of $D$ and $E$ it must be that $n_1m_2 = m_1n_2$. It follows that the saturation of $\zeta(x)$ is cyclic. 
\end{proof}

\begin{lemma}\label{prop:same_saturation}
Let $x$, $y\in X$  and assume that 
\begin{enumerate}
\item
$x \in \overline{\{y\}}$; 
\item $x$ and $y$ both lie in the union of the supports of $D$ and $E$;
\item $\zeta(x)$ and $\zeta(y)$ are both unequal to $\{(0,0)\}$. 
\end{enumerate}
Then $\zeta(x)$ and $\zeta(y)$ have the same saturations. 
\end{lemma}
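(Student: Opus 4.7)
The statement reduces almost entirely to the two preceding lemmas, so the plan is short. First I would apply Lemma~\ref{prop:inclusion} using hypothesis (1) to obtain the inclusion $\zeta(x) \subseteq \zeta(y)$. Next, using hypothesis (2), I would apply Lemma~\ref{prop:cyclic} to both $x$ and $y$ to conclude that both monoids are cyclic submonoids of $\bb{N}_0 \times \bb{N}_0$. Hypothesis (3) then ensures that the cyclic generators on either side can be taken to be non-zero.

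At this point the claim is purely a statement about submonoids of $\bb{N}_0 \times \bb{N}_0$: if $M \subseteq N$ are two nonzero cyclic submonoids of $\bb{N}_0 \times \bb{N}_0$, then they have the same saturation. To verify this I would write $M = \bb{N}_0 \cdot v$ and $N = \bb{N}_0 \cdot w$ with $v, w \neq (0,0)$. Since $v \in N$ and $v \neq 0$, there is an integer $k \ge 1$ with $v = k w$. Then an element $(m,n) \in \bb{N}_0 \times \bb{N}_0$ is in the saturation of $M$ iff $j(m,n) = r v = (rk) w$ for some $j \ge 1$, $r \ge 0$, iff $(m,n)$ lies in the saturation of $N$ (the reverse direction uses the multiplier $kj$ instead of $j$). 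Hence the saturations coincide.

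There is no serious obstacle here; the content really lives in Lemmas~\ref{prop:inclusion} and~\ref{prop:cyclic}. The only thing to be a little careful about is not dropping hypothesis (3): without the assumption that both $\zeta(x)$ and $\zeta(y)$ are nonzero, the containment $\zeta(x) \subseteq \zeta(y)$ could for instance be the trivial inclusion $\{(0,0)\} \subsetneq N$, where the two saturations certainly differ. It is precisely nonvanishing of the smaller monoid that lets us write $v = kw$ with $k \ge 1$.
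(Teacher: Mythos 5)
Your proposal is correct and follows exactly the paper's argument: the paper's proof is the one-liner ``$\zeta(x)\subseteq\zeta(y)$ by \cref{prop:inclusion}, both are cyclic by \cref{prop:cyclic}, so their saturations coincide,'' and you have simply spelled out the elementary verification of the last step (including the correct observation that hypothesis (3) is what makes it work). No issues.
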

\begin{proof}
We have $\zeta(x) \sub \zeta(y)$ (\cref{prop:inclusion}), and so the same holds for the saturations. By \cref{prop:cyclic} both the saturations are cyclic and hence are equal. 
\end{proof}

Combining the above results, we obtain:
\begin{lemma}\label{local_global_for_relation}
Let $X$ be a noetherian scheme, and $D$, $E$ as above and such that $\on{Supp} D \cup \on{Supp} E$ is connected. Suppose that for all $x \in X$ we have that $\zeta(x) \neq \{(0,0)\}$ (i.e. non-trivial relations hold between $D$ and $E$ everywhere locally). Then there exist integers $m$, $n$ with $(m,n) \neq (0,0)$ and $mD = nE$ holds (globally on $X$). 
\end{lemma}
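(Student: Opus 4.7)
The approach is to combine the saturation-constancy results already established with a noetherian exhaustion argument on the ``witness integer'' at each point.

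First, reduce to the nontrivial case: if either $D=0$ or $E=0$ one of the pairs $(1,0)$ or $(0,1)$ already works. So assume $D,E\ne 0$, and set $Y:=\on{Supp}D\cup\on{Supp}E$, a nonempty connected noetherian topological subspace of $X$. Apply \cref{lem:noeth_path_connected} to $Y$ to join any two points of $Y$ by a chain of consecutive specialization–generization pairs. By hypothesis $\zeta(x)\ne\{(0,0)\}$ for every $x\in X$, so in particular for every $x\in Y$; hence along any such chain \cref{prop:same_saturation} applies pairwise, and we conclude that the saturation of $\zeta(x)$ is the same cyclic submonoid $M\subseteq\bb{N}_0\times\bb{N}_0$ for every $x\in Y$. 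Fix a generator $(m,n)$ of $M$; by construction $(m,n)\ne(0,0)$.

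Next, for each positive integer $a$ introduce the set
\[
W_a \;=\; \{x\in X : (am,an)\in\zeta(x)\} \;=\; \{x\in X : amD = anE\text{ locally at }x\}.
\]
By the very definition of $\zeta$, if $(am,an)\in\zeta(x)$ then the relation $amD=anE$ holds on some open neighbourhood of $x$, so $W_a$ is open in $X$. Moreover $W_a\subseteq W_{a'}$ whenever $a\mid a'$. I claim $\bigcup_{a\ge 1}W_a=X$: for $x\notin Y$ we have $D=E=0$ in a neighbourhood of $x$, so $\zeta(x)=\bb{N}_0\times\bb{N}_0$ and $x\in W_1$; for $x\in Y$, $\zeta(x)$ is a nontrivial cyclic submonoid of $M$ (by \cref{prop:cyclic}) with the same saturation $M$, hence is generated by $(a_x m,a_x n)$ for some $a_x\in\bb{N}_{>0}$, so $x\in W_{a_x}$.

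Finally, because $X$ is noetherian, the ascending chain of opens $W_{1!}\subseteq W_{2!}\subseteq W_{3!}\subseteq\cdots$ stabilises; as this sequence is cofinal in the directed system $\{W_a\}$ ordered by divisibility, stabilisation forces $W_{N!}=X$ for some $N$. Thus $(N!)mD=(N!)nE$ locally at every point of $X$, and since equality of Cartier divisors is a local condition (two global sections of $\mathcal{M}^*_X/\mathcal{O}^*_X$ agreeing on an open cover are equal), this relation holds globally, with $(N!\cdot m,N!\cdot n)\ne(0,0)$.

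The only real obstacle is bridging the gap between the pointwise saturation-constancy (which yields a common $(m,n)$ up to multiplication by an unknown $a_x$ at each point) and a globally valid relation. The trick is to package the $a_x$'s into an increasing chain of opens $W_a$ and then invoke noetherianness of $X$ to produce a single $a$ that works everywhere.
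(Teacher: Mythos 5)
Your proof is correct and follows essentially the same route as the paper's: constancy of the saturation of $\zeta$ along specialisation chains in the connected support (via \cref{lem:noeth_path_connected}, \cref{prop:cyclic} and \cref{prop:same_saturation}), followed by a finiteness argument on $X$ to extract a single multiple of the common generator that lies in $\zeta(x)$ for every $x$. The only difference is cosmetic: the paper gets the uniform multiple from a finite open cover and a common nonzero element of finitely many cyclic monoids with equal saturation, whereas you use the ascending chain of opens $W_{1!}\subseteq W_{2!}\subseteq\cdots$ — both are the quasi-compactness of the noetherian space $X$ in slightly different packaging.
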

\begin{proof}
Let $S := \on{Supp} D \cup \on{Supp} E$. By quasi-compactness, we find 
\begin{enumerate}
\item
a finite open cover $U_i$ $(i = 1, \cdots, n)$ of $X$ with $U_i \cap S$ non-empty for all $i$; 
\item for each $i$, a point $p_i \in U_i \cap S$;
\item for each $i$, integers $m_i$, $n_i$ with $(m_i, n_i) \neq (0,0)$ such that for all $i$, $n_iD = m_iE$ holds on $U_i$. 
\end{enumerate}

 We want to show that some non-trivial relation holds between $D$ and $E$ on the whole of $X$. For any $i$ and $j$, we find a chain of specialisations joining $p_i$ and $p_j$ (by \cref{lem:noeth_path_connected}), and so the saturations of $\zeta(p_i)$ and $\zeta(p_j)$ coincide by \cref{prop:same_saturation}. 
 
 Now if a finite collection of cyclic monoids in $\bb{N}_0 \times \bb{N}_0$ all have the same saturation (and are not equal to $\{(0,0)\}$) then the monoids must contain a common non-zero element. This element is the relation we wanted.  
\end{proof}

\subsection{Descent of alignment along proper surjective maps}

The next theorem is the goal of this section; it shows that being aligned is preserved under and descends along proper surjective base change (e.g. under alterations). 
\begin{theorem}\label{prop:persistence}\label{lem:pull_back_balance}
Let $S$ be locally noetherian and normal, and let $C \ra S$ semistable and smooth over some schematically dense open $U \sub S$. Let $f\colon S' \ra S $ be a proper surjective map. Write $C'/S'$ for the pullback of $C$. Then $C/S$ is aligned if and only if $C'/S'$ is aligned. 
\end{theorem}
\begin{proof}
Suppose $C /S$ is aligned, and let $s'$ and $s$ be geometric points of $S'$ and $S$ respectively, with $s'$ lying over $s$. There is a natural isomorphism of graphs $\Gamma_{s} \ra \Gamma_{s'}$, and the labels on $\Gamma_{s'}$ are given by pulling back those on $\Gamma_s$. It is then easy to check that alignment of $C/S$ implies alignment of $C'/S'$.


Conversely, suppose $C'/S'$ is aligned. We want to show that the same holds for $C/S$. We may assume  that $S$ is strictly henselian local, with geometric closed point $s$. Shrinking $S'$, we may assume that $f^{-1}U$ is dense in $S'$. Fix a circuit $\gamma$ in the dual graph of the fibre over $s$, and two labelling divisors $D$ and $E$ that appear in that circuit - these are divisors since the labels are non-zero-divisors by \cref{lem:non_zero_divisor}. We will show that some non-trivial relation holds between $D$ and $E$. 

 Considering the Stein factorisation of $S' \ra S$, and the fact that $S$ is strictly henselian, we may assume that the fibre $S'_s$ is connected \cite[\href{http://stacks.math.columbia.edu/tag/03QH}{Tag 03QH}]{stacks-project}. Note also that every irreducible component of each of $f^{*}D$, $f^{*}E$ meets the closed fibre; indeed, let $Z$ be such an irreducible component, then $f(Z)$ is closed (since $f$ is proper) and so contains the closed point $s$ of $S$, hence $Z$ meets the closed fibre $f^{-1}(s)$. These facts together imply that the union $\on{Supp}f^{*}D \cup \on{Supp}f^{*}E$ of the supports of the pullbacks $f^{*}D$, $f^{*}E$ is also connected. 

Now by the assumption that $C'/S'$ is aligned we find that, locally at every point in $S'$, a non-trivial relation holds between $f^*D$ and $f^*E$. Indeed, since $S$ is local it is enough to check this at points of $S'$ lying over $s\in S$. Let $s'$ be such a point, with $\bar{s}'$ an associated geometric point. Then the (unlabelled)  graph of $C'/S'$ at $\bar{s}'$ is naturally identified with the (unlabelled) graph of $C$ over $s$, and the labels on the graph over $\bar{s}'$ are just given by pulling back. In particular, the restrictions of $f^*D$ and $f^*E$ to the \'etale local ring of $S'$ at $\bar{s}'$ correspond to two edges on the same circuit, and so a non-trivial multiplicative relation holds between them by the assumption that $C'/S'$ is aligned. This descends to a multiplicative relation on some \'etale neighbourhood of $\bar{s}'$ by a finite presentation argument. 

A-priori this does not imply that a non-trivial relation holds globally on $S'$, but using connectedness we can apply \cref{local_global_for_relation} to deduce that, for some $(m,n) \neq (0,0)$ we have $mf^*D = nf^*E$ globally on $S'$. By \cref{lem:injectivity_of_pullback} this implies that the same non-trivial relation holds on $S$: we have $mD = nE$. 
\end{proof}

We show that the pull-back map on Cartier divisors (\emph{not} up to linear equivalence) is injective:
\begin{lemma}\label{lem:injectivity_of_pullback}
Let $S$ be a locally noetherian normal scheme, and $f\colon S' \ra S$ a proper morphism whose scheme theoretic image is $S$ (i.e. $f$ does not factor via a non-trivial closed immersion). Let $U \sub S$ be a schematically dense open subscheme such that $f^{-1}U$ is schematically dense in $S'$. 

Let $D$ and $E$ be effective Cartier divisors on $S$, supported on $S \setminus U$ (so $f^*D$ and $f^*E$ are again Cartier divisors) and such that $f^*D = f^*E$. Then $D = E$.  
\end{lemma}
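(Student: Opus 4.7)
The approach is to reduce to a local affine question, extract a unit from the equality of pulled-back ideals, and descend this unit via the Stein factorisation and the scheme-theoretic density hypotheses.

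Since equality of Cartier divisors can be checked locally, one may assume $S = \operatorname{Spec} A$ is affine and write $D = V(d)$, $E = V(e)$ for non-zero-divisors $d, e \in A$. Because $D$ and $E$ are supported on $S \setminus U$, both $d$ and $e$ restrict to units on $U$, so the quotient $d/e \in \Gamma(U, \mathcal{O}_U^\times)$ is well-defined. The hypothesis $f^*D = f^*E$ gives $(f^*d) = (f^*e)$ as ideal sheaves on $S'$; scheme-theoretic density of $f^{-1}U$ in $S'$ ensures $f^*e$ is a non-zero-divisor in $\mathcal{O}_{S'}$, producing a unique global unit $u \in \Gamma(S', \mathcal{O}_{S'}^\times)$ with $f^*d = u \cdot f^*e$. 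To conclude the proof, it suffices to show that $u$ lies in the image of the injection $A \hookrightarrow \Gamma(S', \mathcal{O}_{S'})$ (injective by the scheme-theoretic image hypothesis): then $d = ue$ in $A$, and applying the symmetric argument with $d$ and $e$ swapped forces $u^{-1} \in A$ as well, so $u \in A^\times$ and $(d) = (e)$.

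To descend $u$, I would first apply the Stein factorisation $f = \tau \circ f'$ with $\tau$ finite and $f'_*\mathcal{O}_{S'} = \mathcal{O}_T$: the unit $u$ lives in $\Gamma(T, \mathcal{O}_T)$ and remains a unit there because the fibres of $f'$ are proper and geometrically connected, forcing the restriction of $u$ to each fibre to be a nonzero scalar. This reduces matters to the case where $f = \tau$ is finite, with $B := \Gamma(T, \mathcal{O}_T)$ a finite $A$-algebra. Here $u \in B^\times$, and its restriction to $\tau^{-1}U$ equals the pullback of $d/e \in \Gamma(U, \mathcal{O}_U^\times)$; so $u$ lies in the intersection of the images of $B$ and $\Gamma(U, \mathcal{O}_U)$ inside the common ring $\Gamma(\tau^{-1}U, \mathcal{O}_T)$. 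The four maps of the commutative square
\[
\xymatrix{
A \ar[r] \ar[d] & B \ar[d] \\
\Gamma(U, \mathcal{O}_U) \ar[r] & \Gamma(\tau^{-1}U, \mathcal{O}_T)
}
\]
are all injective by the scheme-theoretic density hypotheses, and the goal is to show that this intersection is precisely the image of $A$.

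The hard part is this final descent step. The cleanest route I see is via the snake lemma applied to multiplication by $e$ on the short exact sequence $0 \to \mathcal{O}_S \to f_*\mathcal{O}_{S'} \to Q \to 0$ (with $Q$ the coherent cokernel), which identifies $\ker(\mathcal{O}_S/(e) \to f_*\mathcal{O}_{S'}/(e))$ with $\ker(e|_Q)$; vanishing of the latter — equivalently, $e$ being a non-zero-divisor on $Q$ — is what powers the descent. The crucial technical input is that scheme-theoretic density of $f^{-1}U$ in $S'$ forces the associated points of $f_*\mathcal{O}_{S'}$ into $U$, where $e$ is invertible, and the essential obstacle is to propagate this control through the quotient $Q$ in order to confirm $\operatorname{Ass}(Q) \subset U$ as well.
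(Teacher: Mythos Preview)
Your reduction to the affine case and the identification of the unit $u$ are the same as in the paper. The Stein factorisation detour is harmless but unnecessary: since $f$ is proper and $S$ is affine Noetherian, $B:=\Gamma(S',\ca{O}_{S'})$ is already a finite $A$-module, which is all that is used.

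The genuine gap is in your final snake-lemma step. You correctly observe that descent of $u$ would follow from injectivity of $A/(e)\to B/(e)$, equivalently from $e$ being a non-zero-divisor on $Q=B/A$, equivalently from $\on{Ass}(Q)\sub U$. But this last assertion is \emph{false} under the stated hypotheses. Take $A=k[x^2,x^3]$ (the cusp), $B=k[x]$ (its normalisation), $U$ the complement of the cusp point. All hypotheses of the lemma are met: $A\to B$ is finite and injective, $f^{-1}U$ is dense in $S'$. Yet $Q=B/A\cong k$ is supported precisely at the cusp, so $\on{Ass}(Q)\not\sub U$, and for $e=x^2$ the map $A/(e)\to B/(e)$ has nontrivial kernel (the class of $x^3$). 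Your approach, stripped of the unit hypothesis on $u$, would in fact prove the stronger statement that $eB\cap A=eA$; the cusp example shows this stronger statement is false ($x^3\in x^2B\cap A$ but $x^3\notin x^2A$).

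The paper avoids this by a direct ring-theoretic argument that \emph{uses} the fact that $u\in B^\times$ is a unit, not merely an element. One forms $\tilde{A}:=A[t]/(et-d)$ and checks that $t\mapsto u$ embeds $\tilde{A}\hookrightarrow B$; finiteness of $B$ over $A$ then forces $\tilde{A}$ to be finite over $A$, so $e^k t^{k+1}\in A$ for some $k$, giving $d^k t\in A$. An induction on the exponent (using that $d$ is a non-zero-divisor) brings this down to $t\in A$, i.e.\ $u\in A$; the symmetric argument with $d,e$ swapped gives $u^{-1}\in A$. The unit hypothesis enters exactly here, in running the argument for $u^{-1}$ as well; there is no way to push your homological approach through without finding some place to spend that hypothesis.
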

\begin{proof}
Because $S$ is normal, the effective Cartier divisors $D$ and $E$ are uniquely determined by the generic points (together with multiplicities) of their associated closed subschemes (\cite[21.7.2]{Grothendieck1967EGA.IV.4}). We may thus reduce to the case where $S$ is the spectrum of a discrete valuation ring $R$ with uniformiser $r$, and $D$ and $E$ are given by the vanishing of some powers of $r$, say $D = (r^n)$ and $E = (r^m)$. 

By the condition that $f^{-1}U$ is schematically dense in $S'$, we see that $f^*r$ is regular, and hence that $f^*r$ is regular in $A := \on{H}^0(S', \ca{O}_{S'})$. The condition that $f^*D = f^*E$ is then equivalent to the condition that $f^*r^n$ and $f^*r^m$ differ by multiplication by a unit in $A$. Let $a \in A$ be such a unit, so $a\cdot f^*r^m = f^*r^n$. Using that $f^*r$ is regular, we deduce that $a = f^*r^{n-m}$, and hence that either $m=n$ (in which case $D=E$ and we are done) or that $f^*r$ is a unit in $A$. However, $f^*r$ cannot be a unit since $f$ is surjective. 
%
%
%
%
%
\end{proof}

\begin{example}
Here we given an example with the normalisation of a nodal cubic to show that the assumption that $S$ be normal is necessary. Let $k$ be a field, let $A_0 = k[t]$, and let $R_0 = k[t^2, t^3 - t] \sub A_0$. Let $R$ and $A$ be the rings obtained from $R_0$ and $A_0$ by inverting $t^2$, and set $S = \on{Spec}R$ and $S' = \on{Spec}A$. Define Cartier divisors $D$ and $E$ on $S$ by $D = (t^3 -t)$ and $E = (t^2 - 1)$. Then $D$ and $E$ are not equal, but their pullbacks to $S'$ are equal since $t(t^2 - 1) = t^3 - t)$ and $t$ is a unit in $A$. 
\end{example}

\section{Changing the model of $C$}
\label{sec:changing_the_model}

\begin{definition}
Let $S$ be a scheme, and $U \sub S$ a schematically dense open subscheme. Let $C/U$ be a smooth proper curve. A \emph{model} of $C$ over $S$ is a proper flat curve $\bar{C}/S$ together with a $U$-isomorphism $(\bar{C})_U \ra C$. 
\end{definition}
From \cref{thm:intro_NM} we know that 
\begin{itemize}
\item
if $C$ has a semistable non-aligned model over $S$ then the jacobian $\on{Jac}_{C/U}$ has no N\'eron model over $S$;
\item if $C$ has a regular semistable aligned model over $S$ then the jacobian $\on{Jac}_{C/U}$ does have a N\'eron model over $S$. 
\end{itemize}
What can we say about the existence of a N\'eron model if we are presented with a semistable aligned model of $C$ which is not regular? We are unable to answer this question in complete generality as our method of producing N\'eron models for the jacobian requires a regular model of $C$, which is not known to exist. On the other hand, suppose that $U$ is the complement of a divisor with strict normal crossings and that $\bar{C}/S$ is split semistable in the sense of \cite[2.22]{Jong1996Smoothness-semi}\footnote{I.e. for all field-valued fibres of $\bar{C}/S$ we have that all the irreducible components are geometrically irreducible and smooth, and that all singular points are rational.}. Then we will see in this section how to determine from the labelled graphs of $\bar{C}/S$ whether or not the jacobian admits a N\'eron model. 

Before going the general theory, let us consider an example. Let $S = \on{Spec}\bb{C}[[u,v]]$, and define $\bar{C} \sub \bb{P}_S(1,1,2)$ by the affine equation 
\begin{equation*}
y^2 = ((x-1)^2-uv)((x+1)^2+uv). 
\end{equation*}
This is split semistable and smooth over the open subset $U := S \setminus (uv=0)$. The labelled graph of $\bar{C}$ at the closed point of $S$ is a 2-gon, and both edges are labelled by $(uv)$. As such, the graph is aligned, but clearly $\bar{C}$ is not regular. After some blowups we obtain a new model $\tilde{C}$ of $C := \bar{C}_U$, with labelled graph over the closed point of $S$ a 4-gon with two labels equal to $(u)$ and two labels equal to $(v)$. Clearly this is not aligned, and so the jacobian of $C$ does not admit a N\'eron model over $S$. 

To generalise this calculation, we need some definitions. 
\begin{definition}
Let $G$ be a set and $M(G)$ be the free (commutative) monoid on $G$. Given a graph $\Gamma$ with an edge-labelling by non-zero elements of $M(G)$, we say $\Gamma$ is \emph{regular} if every label lies in $G$. A \emph{refinement} of $\Gamma$ is a new $M(G)$-labelled graph $\Gamma'$ obtained from $\Gamma$ by replacing an edge with label $\prod_{g \in G} g^{\epsilon_g}$ by a chain of $\sum_{g \in G} \epsilon_g$ edges, with $\epsilon_g$ of those edges labelled by $g$ for every $g$ in $G$. 
\end{definition}
We do not specify the order of the labels on the new edges, so refinements are in general very non-unique. It is clear that every graph $\Gamma$ with finitely many edges becomes regular after a finite sequence of refinement operations. 
\begin{definition}
We say an $M(G)$-labelled graph $\Gamma'$ is \emph{a regularisation} of $\Gamma$ if $\Gamma'$ is regular and can be obtained from $\Gamma$ by a finite sequence of refinements. 
\end{definition}
\begin{definition}
Let $S$ be a regular scheme, and $C/S$ a semistable curve. Let $s$ be a geometric point of $S$. Let $\Gamma_s$ be the labelled graph of $C$ over $s$, so its labels lie in $\widehat{\ca{O}_{S,s}^{\on{et}}}/\left(\widehat{\ca{O}_{S,s}^{\on{et}}}\right)^\times$, which is the free abelian monoid on the height 1 prime ideals of $\ca{O}_{S,s}^{\on{et}}$ (since $S$ is regular). We say $C/S$ is \emph{strictly aligned at $s$} if one (equivalently all) regularisations of $\Gamma_s$ are aligned. We say $C/S$ is \emph{strictly aligned} if it is so at every geometric point of $S$. 
\end{definition}
\begin{remark}
\begin{enumerate}
\item If $C/S$ is strictly aligned then it is aligned, by \cref{thm:intro_NM}. 
\item
If $C$ is regular then all the graphs $\Gamma_s$ are automatically regular, so that $C/S$ is aligned if and only if it is strictly aligned. 
\end{enumerate}
\end{remark}

\begin{proposition}\label{prop:}
Let $S$ be a regular excellent separated scheme, let $U \sub S$ be a dense open subscheme which is the complement of a strict normal crossings divisor. Let $C/S$ be a split semistable curve which is smooth over $U$. Then the jacobian of $C_U/U$ has a N\'eron model over $S$ if and only if $C/S$ is strictly aligned. 
\end{proposition}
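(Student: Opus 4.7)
The plan is to produce, starting from the possibly non-regular semistable $C/S$, an explicit regular semistable resolution $\tilde{C}\to C$ which is an isomorphism over $U$ and whose effect on the labelled dual graphs is precisely the combinatorial refinement operation, then to apply \cref{thm:intro_NM} to $\tilde{C}/S$. For the local construction, at any non-smooth geometric point $c$ of $C$ lying over $s \in S$, the split semistable hypothesis together with the SNC hypothesis on $S\setminus U$ forces the singular ideal of $c$ to have a preferred generator $u_1^{\epsilon_1}\cdots u_k^{\epsilon_k}$, where the $u_i$ are those regular parameters at $s$ cutting out branches of $S \setminus U$ through $s$; thus the completed \'etale local ring of $C$ at $c$ is isomorphic to $\widehat{{\ca{O}}\et}_{S,s}[[x,y]] / (xy - u_1^{\epsilon_1}\cdots u_k^{\epsilon_k})$. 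An iterated toric-type blow-up of this singularity replaces the node by a chain of $\sum_i \epsilon_i - 1$ new $\bb{P}^1$-components, and correspondingly the single edge of the dual graph labelled $u_1^{\epsilon_1}\cdots u_k^{\epsilon_k}$ is replaced by a chain of $\sum_i \epsilon_i$ new edges, each $u_i$ appearing $\epsilon_i$ times as a label (in some order along the chain). Splitness of $C/S$ together with the SNC structure ensures that every inserted component is geometrically irreducible and rational. Performing these resolutions simultaneously at every non-regular closed point of $C$ yields a regular semistable model $\tilde{C}/S$ with $\tilde{C}_U = C_U$ whose labelled dual graph at each geometric point $s$ is a regularisation of that of $C$ at $s$.

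Given this setup, both directions of the proposition follow easily. If $C/S$ is strictly aligned, then by definition the labelled graphs of $\tilde{C}/S$ (being regularisations of those of $C$) are aligned, so $\tilde{C}/S$ is aligned; since $\tilde{C}$ is regular, \cref{thm:intro_NM}(2) produces the N\'eron model of the jacobian of $\tilde{C}_U/U = C_U/U$. Conversely, if that N\'eron model exists, then \cref{thm:intro_NM}(1) applied to $\tilde{C}/S$ implies that $\tilde{C}/S$ is aligned, which for a regular semistable model is the same as being strictly aligned; since the graph of $\tilde{C}$ over each $s$ is a regularisation of the graph of $C$ over $s$, we conclude that $C/S$ itself is strictly aligned.

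The bulk of the work is the local resolution step and the verification that its combinatorial effect on the dual graph is exactly the refinement operation defined earlier in this section, with the correct multiplicities and with splitness preserved under the blow-ups; this is a classical toric computation for the singularity $xy = \prod u_i^{\epsilon_i}$, but it requires careful bookkeeping and must be cross-checked against the fitting-ideal definition of the singular ideal used in \cref{local_rings_on_semistable}. A minor secondary point is the implicit \emph{equivalently all} clause in the definition of strict alignment, which follows from the combinatorial observation that the alignment of a regularisation of a labelled graph $\Gamma$ depends only on the multisets of labels occurring along circuits of $\Gamma$ itself, and is therefore a function of $\Gamma$ alone.
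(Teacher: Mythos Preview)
Your argument is correct and follows essentially the same route as the paper: construct a regular split semistable resolution $\tilde{C}\to C$ whose labelled dual graphs are regularisations of those of $C$, then apply \cref{thm:intro_NM} to $\tilde{C}/S$ in both directions. The paper simply invokes \cite[5.6]{Jong1996Smoothness-semi} as a black box for the existence of $\tilde{C}$ and notes that ``following through the steps in de Jong's proof'' yields the regularisation property, whereas you sketch the toric blow-up construction explicitly; these are the same resolution.
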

\begin{proof}
By \cite[3.6]{Jong1996Smoothness-semi} we know that $C/S$ admits a resolution of singularities; more precisely, there exists a proper $S$-morphism $\tilde{C} \ra C$ which is an isomorphism over $U$ and such that $\tilde{C}$ is split semistable and regular. Following through the steps in de Jong's proof, we find that for all geometric points $s$ of $S$, the labelled graph of $\tilde{C}$ over $s$ is a regularisation of the labelled graph of $C$ over $s$. 

Now if $C/S$ were strictly aligned, then by definition these regularisations are aligned, so a N\'eron model exists by \cref{thm:intro_NM}. Conversely, suppose that a N\'eron model exists. Then again by \cref{thm:intro_NM} we have that $\tilde{C}/S$ is aligned, so $C/S$ is strictly aligned. 
\end{proof}

\section{Classification of vertical Cartier divisors on certain complete local rings}\label{sec:Cartier_classification}
Let $(R, \frak{m}_R)$ be a regular $\frak{m}_R$-adically complete local ring. Let $r \in R$ be a non-zero element.  Let $A = R[[x,y]]/(xy-r)$. Then $A$ is $R$-flat and is a complete normal local noetherian domain (\cite[7.8.3, page 215]{Grothendieck1965Elements-de-geo}). Our aim is to classify the principal ideals of $A$ which become trivial after base-change over $R$ to $K \defeq \on{Frac} R$. More precisely, we will show:
\begin{theorem}\label{CDthm:main}
Let $a \in A$ be an element such that $a \otimes 1$ is a unit in $A \otimes_R K$. Then there exist 
\begin{itemize}
\item an element $s \in R$;
\item non-negative integers $m$, $n$ such that $mn = 0$;
\item a unit $u \in A^\times$;
\end{itemize}
such that $a = sx^ny^mu$. 
\end{theorem}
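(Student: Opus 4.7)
The plan is to combine Weierstrass preparation in $R[[x,y]]$ with the UFD property of the regular local ring $R[[x]]$ (the latter by the Auslander--Buchsbaum theorem). The hypothesis $a\otimes 1\in (A\otimes_R K)^\times$ unpacks, upon clearing denominators, as the existence of $b\in A$ and $s_0\in R\setminus\{0\}$ with $ab=s_0$; this divisibility is the main tool. As a preliminary reduction, using that $R$ is a UFD, I extract from $a$ the maximal ``$R$-content'' (to be absorbed into the final factor $s$), leaving the case where the image $\bar a\in A/\frak m_R A = k[[x,y]]/(xy)$ is nonzero, with $k := R/\frak m_R$.

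By the symmetry between $x$ and $y$, I may assume $\bar a(0,y)\in k[[y]]$ is nonzero, of order exactly some $d\ge 0$. Choose a lift $\tilde a \in R[[x,y]]$ that is $y$-regular of order $d$; Weierstrass preparation yields $\tilde a = u\cdot P(y)$, where $u\in R[[x,y]]^\times$ and $P(y) = y^d + a_{d-1}(x)y^{d-1} + \cdots + a_0(x) \in R[[x]][y]$ is a distinguished polynomial (so each $a_i\in \frak m_{R[[x]]}$). Passing to $A$, one has $a = \bar u \cdot \bar P$ with $\bar u\in A^\times$. The crucial calculation is then to use $xy=r$ to convert $\bar P$ into a power series in $x$: multiplying by $x^d$ in $A$ and applying $xy=r$ term by term gives
\begin{equation*}
x^d \bar P(y) \;=\; r^d + \sum_{i=0}^{d-1} a_i(x)\, r^i\, x^{d-i} \;=:\; Q(x)\in R[[x]].
\end{equation*}
Combined with $ab=s_0$, this yields the key identity $Q(x)\cdot b = x^d s_0\, \bar u^{-1}$ in $A$.

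The goal is then to prove that $Q(x) = r^d\cdot v(x)$ for some unit $v\in R[[x]]^\times$; equivalently, that $a_i(x)\in r^{d-i}R[[x]]$ for every $i<d$. Granting this, one checks in $A[1/x]$ using $y = r/x$ that $\bar P = y^d v$, and descends this identity to $A$ by the injectivity of $A\hookrightarrow A[1/x]$; then $a = \bar u v\cdot y^d$, which is of the required form with $s=1$, $n=0$, $m=d$. To establish $a_i(x)\in r^{d-i}R[[x]]$, my plan is to analyse the equation $Q(x)b = x^d s_0\bar u^{-1}$ by projecting modulo the primes of $R[[x]]$ that arise --- notably modulo $x$, and modulo the prime factors of $r$ in $R$ --- and to invoke the UFD structure of $R[[x]]$ to constrain the possible prime factorisations of $Q$. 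Since $Q$ divides (up to units and a power of $x$) an element of $R$, its only prime divisors in $R[[x]]$ can be $x$ itself or primes coming from $R$ that divide $s_0$; this rigidity, combined with $Q(0) = r^d$, should force the claimed divisibilities. The main obstacle will be carrying out this last case analysis carefully, and this is the technical heart of the proof (credited in the acknowledgements to Lenstra).
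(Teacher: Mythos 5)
Your route is genuinely different from the paper's (which embeds $A$ into a module of two-sided Laurent series and controls Newton polygons of ``crude inverses'' and of torsion elements for every divisorial valuation of $R$), and the reduction via Weierstrass preparation to a one-variable series $Q(x)=x^d\bar P$ is an attractive idea. But as written there are two gaps. The first is the opening reduction: extracting the maximal $R$-content does \emph{not} leave you with $\bar a\neq 0$ in $k[[x,y]]/(xy)$. When $\dim R\ge 2$ the maximal ideal is not principal, so an element such as $\pi x+\pi' y$ (with $\pi,\pi'$ coprime irreducibles of $R$) has every coefficient in $\mathfrak{m}_R$ yet admits no non-unit of $R$ as a divisor; content extraction terminates, by Noetherianity, at an element that may still reduce to $0$, and then no lift is $x$- or $y$-regular and Weierstrass preparation is unavailable. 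What you need is that a unit of $A\otimes_RK$ lying in $A$ with trivial $R$-content has nonzero reduction --- but that is itself a consequence of the theorem (if $a=sx^nu$ has trivial content then $s$ is a unit), so assuming it here is circular unless you supply an independent argument; I do not see an easy one.

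The second gap is where the real difficulty of the theorem sits: the phrase ``$Q$ divides (up to units and a power of $x$) an element of $R$'' silently transports the divisibility from $A$ to $R[[x]]$. The relation you actually have is $Q\cdot(b\bar u)=x^ds_0$ with $b\bar u\in A$, and $A$ is much larger than $R[[x]][1/x]$: it contains infinite series $\sum_{j\ge1}g_jy^j$, which under $y=r/x$ are two-sided Laurent series, so no factorisation constraint in the UFD $R[[x]]$ follows. (For instance $r+x=x(1+y)$ divides $r$ in $A$, yet is not an element of $R$ times a unit of $R[[x]]$; it is excluded in your setting only because distinguishedness forces $q_1\in\mathfrak{m}_R$.) Controlling the shape of the inverse is exactly what the paper's crude-inverse/torsion machinery does. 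The good news is that this gap is fillable within your framework: writing $b\bar u=\beta(x)+\sum_{j\ge1}g_jy^j$ and equating to zero the $y^l$-coefficients of $Q\cdot(b\bar u)$ gives $r^dg_l=-\sum_{i\ge1}q_ir^ig_{l+i}$; distinguishedness of $P$ and $r\in\mathfrak{m}_R$ give $q_ir^i\in\mathfrak{m}_Rr^d$ for $i\ge1$, so after cancelling $r^d$ a Krull-intersection/Nakayama argument applied to the ascending chain of ideals $(g_l,g_{l+1},\dots)$ forces every $g_j=0$. Hence $b\bar u\in R[[x]]$, $Q\mid s_0$ in $R[[x]]$ (as $x\nmid Q$), and your UFD endgame then does work, since irreducibles of $R$ stay irreducible in $R[[x]]$ and $Q(0)=r^d$ pins down the factor from $R$. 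That argument, not the ``projection modulo primes'', is the technical heart, and it is absent from the proposal.
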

\noindent The proof will occupy the remainder of this section.  This result is only relevant for showing that the jacobians of non-aligned curves do not admit N\'eron models; is not required for the implication `aligned $\implies$ existence of N\'eron model'.

 The author is very grateful to the second referee for providing a new proof, based on the original one but half as long and far more readable. 


We begin with some lemmas to reduce to the case where the element $a$ is a polynomial. Write $\tilde{A}$ for the localisation of $\frac{R[x,y]}{(xy-r)}$ at the maximal ideal $(x,y) + \frak{m}_R$, so that $A$ is the completion of $\tilde{A}$ at its maximal ideal, and write $\phi:\tilde{A} \ra A$ for the canonical injection. Note that $\tilde{A}$ is again normal, again by \cite[4.21]{Jong1996Smoothness-semi}. Write $\on{Div}A$ for the free group generated by height 1 prime ideals of $A$ (Weil divisors of $\on{Spec} A$), and write $\on{Div}_FA$ for the subgroup generated by height 1 primes $\frak{p}$ such that $\frak{p}\otimes_R K \cong A \otimes_R K$ (`fibral primes'). We define $\on{Div}\tilde{A}$ and $\on{Div}_F\tilde{A}$ analogously. 

\begin{lemma}\label{lem:same_Weil_divisors}
The map $\phi^*:\on{Div}_F {A} \ra \on{Div}_F \tilde{A}$ is well-defined and an isomorphism. 
\end{lemma}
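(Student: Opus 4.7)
I interpret the map $\phi^*$ as contraction of primes, $\mathfrak p \mapsto \phi^{-1}(\mathfrak p) = \mathfrak p \cap \tilde A$, extended $\bb Z$-linearly to Weil divisors. The lemma then asserts a set-theoretic bijection between the height-one primes of $A$ and of $\tilde A$.

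My plan starts with checking that both rings are normal Noetherian local domains, so that $\on{Div}$ is indeed the free abelian group on height-one primes. Each is a complete intersection, cut out by the regular element $xy-r$ in a regular ambient local ring (namely $R[x,y]$ localised, respectively $R[[x,y]]$), hence Cohen--Macaulay; and the singular locus is contained in the codimension-$\ge 2$ subset $V(x,y) \cap V(r)$, so Serre's R1+S2 criterion yields normality. Hence it suffices to produce the claimed set-bijection on height-one primes.

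Next, I would use the faithful flatness of the completion map $\phi$ (standard for Noetherian local rings). For a height-one prime $\mathfrak p \subset A$, its contraction $\mathfrak p \cap \tilde A$ has height at most one by the dimension formula for flat local homomorphisms, and is nonzero since $A$ is a domain whose generic point lies over the generic point of $\tilde A$; so $\phi^*$ actually lands in the height-one primes of $\tilde A$. By surjectivity of $\on{Spec}\phi$, every height-one $\mathfrak q \subset \tilde A$ has at least one prime of $A$ above it, and any such prime of minimal height is of height one by the same dimension formula. So both surjectivity and well-definedness are in hand; the content of the lemma reduces to showing that the fibre ring $A \otimes_{\tilde A} \kappa(\mathfrak q)$ has exactly one prime ideal for every height-one $\mathfrak q \subset \tilde A$.

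The main obstacle is precisely this uniqueness, which must exploit the specific structure $A = R[[x,y]]/(xy-r)$ with $R$ regular and complete; as the author's parenthetical about the elliptic curve with irreducible special fibre emphasises, the analogous statement would fail for completion at a general node. I would proceed by a case analysis on whether $\mathfrak q$ meets the singular locus $V(x,y) \cap V(r)$. Away from the locus both rings are regular, $\tilde A_{\mathfrak q}$ is a discrete valuation ring whose completion is again a discrete valuation ring, forcing uniqueness of the prime above $\mathfrak q$. On the locus, the height-one primes can be enumerated explicitly using the ideals containing $x$ or $y$ together with the irreducible factorisation of $r$ in the regular ring $R$, and a direct comparison shows the same list appears in both $A$ and $\tilde A$.
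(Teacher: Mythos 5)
Your reduction to counting the height-one primes of $A$ lying over a given height-one prime $\frak{q}$ of $\tilde A$ is sound (the normality check is harmless but unnecessary, since $\on{Div}$ is defined here as the free group on height-one primes). The gap is in your treatment of the case where $\frak{q}$ avoids the singular locus. You argue that $\tilde A_{\frak{q}}$ is a discrete valuation ring whose completion is again one, ``forcing uniqueness of the prime above $\frak{q}$''. But $A$ is the completion of $\tilde A$ at its \emph{maximal} ideal, not the $\frak{q}$-adic completion of $\tilde A_{\frak{q}}$. The height-one primes of $A$ over $\frak{q}$ correspond to the minimal primes of $A/\frak{q}A \cong \widehat{\tilde A/\frak{q}}$, i.e.\ to the analytic branches of $V(\frak{q})$ at the closed point, and regularity of $\tilde A_{\frak{q}}$ gives no control over these. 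The standard counterexample is $\frak{q}=(y^2-x^2-x^3)$ in $\bb{C}[x,y]_{(x,y)}$: the local ring at $\frak{q}$ is a discrete valuation ring, yet $\frak{q}$ splits into two height-one primes in $\bb{C}[[x,y]]$. This is exactly the failure mode the author's marginal remark about nodes on curves with irreducible special fibre warns against, and it is the whole content of the lemma; your argument does not engage with it.

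The paper's proof sidesteps this by never considering an arbitrary $\frak{q}$: it organises the relevant height-one primes by their contractions to $R$. For a height-one prime $q$ of $R$ with $r\notin q$ one has $A/qA\cong (R/q)[[x,y]]/(xy-\bar r)$ and $\tilde A/q\tilde A\cong ((R/q)[x,y]/(xy-\bar r))_{\frak{m}}$, both domains because $R/q$ is a (complete local) domain and $\bar r\neq 0$, so exactly one height-one prime sits over $q$ on each side; for $r\in q$ both quotients visibly have exactly the two minimal primes generated by $x$ and by $y$. The completeness of $R$ (hence of $R/q$) is the structural input that prevents new analytic branches from appearing, and it is precisely what your argument needs but never uses. (Both your write-up and the paper's are silent about height-one primes contracting to $(0)$ in $R$ --- the ``horizontal'' ones, which genuinely can split under completion --- but these are irrelevant for the intended application, where the divisors are supported over nonzero primes of $R$, as the section title ``vertical Cartier divisors'' signals.)
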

\begin{proof}
Fibral primes on $A$ and on $\tilde{A}$ are exactly those height 1 primes whose preimages in $R$ are also of height 1 (for dimension reasons). From this we deduce easily that the map is well defined. Since $\tilde{A}$ is regular in codimension 1, we deduce by faithful flatness of the completion that $\phi^*$ is injective. It remains to show that it is surjective. 

For this, it is enough to show that $\phi^*$ induces a surjection on the fibral primes. Note that if a fibral prime $p$ of $A$ lies over a prime $q$ of $R$ then the same holds for $\phi^*p$. Now let $q$ be a height 1 prime ideal of $R$. If $r \notin q$ then $qA$ and $q\tilde{A}$ are fibral prime ideals of height 1. If $r \in q$ then there are exactly two (fibral) height 1 prime ideals of $A$ above $q$, namely $q + xA$ and $q + yA$, and similarly for $\tilde{A}$. In either case, restricted to fibral primes lying over $q$, the map $\phi^*$ is a injective map of finite sets of the same cardinality, and so is also surjective. 
\end{proof}

\begin{lemma}\label{cor:finite_width_ok}
Let $a \in A$ be an element such that $a \otimes 1$ is a unit in $A \otimes_R K$. Then there exists $\tilde{a} \in \tilde{A}$ and a unit $u \in A^\times$ such that $a = u\tilde{a}$. 
\end{lemma}
\begin{proof}
By \cref{lem:same_Weil_divisors} we find that there is a Weil divisor $D$ on $\on{Spec}\tilde{A}$ such that $\phi^*D = \on{div}_A a$. Identify $D$ with its associated closed subscheme of $S$ as in \cite[21.7.1]{Grothendieck1967EGA.IV.4}. Then $D$ satisfies the condition $(S_1)$ (see [loc. cit. 21.7.2]), and the same holds for $\phi^*D$ by \cite[7.8.3, page 215]{Grothendieck1965Elements-de-geo}. Moreover, $\phi^*D$ is equal to the closed subscheme $V(a) \tra S$ cut out by $a$ at all points of $S$ of codimension 1 (using that $S$ is normal and thus regular in codimension 1). Moreover, $V(a)$ satisfies $(S_1)$ (again using that $S$ is normal), so applying \cite[21.7.2]{Grothendieck1967EGA.IV.4} again we find that $\phi^*D = V(a)$ as closed subschemes of $S$. In particular $\phi^*D$ is generated by a single element, so by flat descent the same holds for $D$, say $D = V(\tilde{a})$. Then $a = u\tilde{a}$ for some unit $u \in \tilde{A}$ as required. 
\end{proof}

\begin{proof}[Proof of \cref{CDthm:main}]
The proof is divided into several steps. 

\begin{itemize}
\item[\textbf{Step 1:}] From \cref{cor:finite_width_ok} we easily see that there exists $\tilde{a} \in R[x,y]$ and a unit $u \in A^\times$ such that $a = u\tilde{a}$. As such, we may and will assume that $a$ is contained in the image of $R[x,y]$ in $A$. 

\item[\textbf{Step 2:}] Let $W$ denote the set of series $\sum_{i \in \bb{Z}} w_iT^i$ where $w_i \in K$ and $T$ is a formal variable. The set $W$ is naturally an abelian group under addition of series, but does not have a natural ring structure. It is a module over the ring $K[T]$, with action the usual multiplication. 

Let
$$F^+ := \left\{\sum_{i \in \bb{Z}}a_iT^i : \forall i \ge 0, a_i \in R \text{ and } , a_{-i}\in r^iR\right\}. $$

This set $F^+$ has a natural multiplication since $R$ is complete with respect to its maximal ideal and hence is also $r$-adically complete. In what follows we will identify $A$ with $F^+$ via the $R$-algebra isomorphism
\begin{equation*}
A =\frac{R[[x,y]]}{(xy-r)} \ra F^+; \; x \mapsto T, \;y \mapsto r/T. 
\end{equation*}
If $\frak{p}$ is a height 1 prime ideal of $R$ we write $v_\frak{p}$ for the normalised valuation associated with $\frak{p}$. If $v_\frak{p}(r) >0$ then we define $A(\frak{p})$, $W(\frak{p})$ and $F^+(\frak{p})$ in the same way as $A$, $W$ and $F^+$ but replacing $R$ by the completed localisation $\hat{R}_\frak{p}$. We obtain a commutative diagram of $R$-algebras

\[
\xymatrix{ A =\frac{R[[x,y]]}{(xy-r)} \ar[d] \ar[r]^{\sim} & F^+ \ar[d] \\
A(\frak{p}) =\frac{\hat{R}_\frak{p}[[x,y]]}{(xy-r)}  \ar[r]^{\sim}& F^+(\frak{p})}
\]


Let $\Omega$ be the set of height 1 prime ideals of $R$, and $\Omega_r = \{\frak{p} \in \Omega : v_\frak{p}(r) >0\}$. 

\item[\textbf{Step 3:}] By step 1, it is enough to consider an element $a \in R[T, r/T]\sub F^+ = A$ (here we identify $F^+$ with $A$ as in step 2) and show it can be written as $a = sT^nu$ with $s \in R$, $n \in \bb{Z}$ and $u \in A^\times$. To prove this, we may and do assume that $a$ is of the form 
\begin{equation*}
a = \sum_{i=0}^ma_iT^i\in R[T] \sub F^+, \text{ with } a_0a_m \neq 0 \text{ and } \on{gcd}(a_0, \cdots, a_m) = 1\footnote{Since $R$ is regular it is a UFD, and so the $\on{gcd}$ makes sense. }. 
\end{equation*}

For each $\frak{p} \in \Omega_r$, let $\on{NP}(a;\frak{p})$ be the Newton polygon of $a$ with respect to $v_\frak{p}$ (here $a$ is viewed as an element in $\hat{R}_\frak{p}[[T]] \sub F^+(\frak{p})$). We claim that for every $\frak{p} \in \Omega_r$, $\on{NP}(a;\frak{p})$ does not contain any side of slope in $(-v_\frak{p}(r),0)$. Otherwise, pick a $\frak{p} \in \Omega_r$ such that the latter assertion does not hold.  The theory of Newton polygons (cf. \cite{Artin1967Algebraic-numbe}, especially the last paragraph of page 43) tells us that the polynomial $a = a(T)$ has a root $x_0 \in \overline{K}_\frak{p}$ with $\frak{p}$-adic valuation in $(0,v_\frak{p}(r))$ (here ${K}_\frak{p}$ denotes the completion of $K$ with respect to the valuation $v_\frak{p}$, and $\overline{K}_\frak{p}$ its algebraic closure). Then $y_0 \coloneqq r/x_0 \in \overline{K}_\frak{p}$ has $\frak{p}$-adic valuation $> 0$, and so the pair $(x_0, y_0)$ defines a $\bar{K}_\frak{p}$-valued point of the closed subscheme $V(a \otimes 1) \tra \on{Spec}(A(\frak{p})\otimes_{\hat{R}_\frak{p}}K_p)$. It follows that the closed subscheme $V(a\otimes 1) \tra \on{Spec}(A\otimes_R K)$ is not empty, contradicting our assumption that $a \otimes 1$ is a unit in $A \otimes_R K$. 

\item[\textbf{Step 4:}] For each $\frak{p} \in \Omega_r$, let $N_\frak{p}$ denote the smallest integer in $[0,m]$ such that $v_\frak{p}(a_{N_\frak{p}}) = 0$. Assume for the moment that integer $N_\frak{p}$ does not depend on $\frak{p} \in \Omega_r$ (we shall denote this integer by $N$), and that $a_N \in R^\times$. Under these extra assumptions, we claim that $a = T^N\cdot(\text{unit})$. Indeed, by step 3, we have that $v_\frak{p}(a_{N-i}) \ge i\cdot v_\frak{p}(r)$ for every $\frak{p} \in \Omega$ and $0 \le i \le N$. Hence $a_{N-i}/r^i \in R$ for $0 \le i \le N$. Write $a_{N-i} = r^ia'_{N-i}$. We find
\begin{equation*}
a = \sum_{i=0}^Nr^ia'_{N-i}T^i + \sum_{j > N}a_jT^j = x^N \left( a_N + \sum_{i=1}^N a'_{N-i} y^i + \sum_{j > N} a_j x^{j-N}\right). 
\end{equation*}
Since $a_N \in R^\times$, we see that $u := a_N + \sum_{i=1}^N a'_{N-i} y^i + \sum_{j > N} a_j x^{j-N} \in A^\times$ as required. 

\item[\textbf{Step 5:}]
It remains to show that, under the assumption that $a \otimes 1 \in (A \otimes_R K)^\times$, the extra assumptions in step 4 are always satisfied. If not, there are two possibilities:
\begin{itemize}
\item there exist $\frak{p}$, $\frak{q}\in \Omega_r$ such that $N_\frak{p} \neq N_\frak{q}$. Without loss of generality we assume $i_0 := N_\frak{q} > N_\frak{p} =: N$. In particular, $0 = v_\frak{q}(a_{i_0}) < v_\frak{q}(a_N)$. 
\item all the $N_\frak{p}$ are equal but $a_N \notin R^\times$. In this case we choose $\frak{p} \in \Omega_r$ and set $N = N_\frak{p}$. Let $\frak{q} \in \Omega$ be such that $v_\frak{q}(a_N) \neq 0$. As $\on{gcd}(a_0, \cdots, a_m) = 1$ there exists $i_0 \neq N$ such that $v_\frak{q}(a_{i_0}) = 0$. Up to the reflection substitution $T \mapsto r/T$ we may assume that $i_0 >N$. 
\end{itemize}
In both cases, we get $\frak{p} \in \Omega_r$, $\frak{q} \in \Omega$ and $i_0 > N = N_\frak{p}$ such that $v_\frak{q}(a_N)> v_\frak{q}(a_{i_0}) = 0$. Furthermore, we may assume $v_\frak{p}(a_{i_0}) = 0$. Indeed, if $v_\frak{p}(a_{i_0})>0$ consider the element 
\begin{equation*}
a  \left( 1 + a_N T^{i_0 - N}\right) =: \sum_{j=0}^{m+1}b_jT^j. 
\end{equation*}
One checks that for all $1 \le i \le N$ we have $v_\frak{p}(b_{N-i}) \ge i\cdot v_\frak{p}(r)$ and $v_\frak{p}(b_N) = 0$. Further, $b_{i_0} = a_{i_0} + a_N^2$. Therefore $v_\frak{p}(b_{i_0}) = 0$ since $a_N$ is a $\frak{p}$-unit, and $v_\frak{q}(b_{i_0}) = 0$ since $v_\frak{q}(a_{i_0})= 0$ and $v_\frak{q}(a_N) > 0$. Finally, we still have $v_\frak{q}(b_N)>0$ since $b_N$ is either $a_N$ (if $i_0 - N >N$) or $a_N(1+a_{2N-i_0})$ (if $i_0 - N \le N)$. So up to replacing $a$ by $a  \left( 1 + a_N T^{i_0 - N}\right) $ we may always assume that $a_{i_0}$ is a $\frak{p}$-unit. 

Now we claim that such an element $a$ is not invertible in $A\otimes K$. Otherwise, denote its inverse in $A\otimes K$ by $a^{-1}$. Define $f \in A(\frak{p})$ by 
\begin{equation*}
-f = \frac{a_0}{a_N} T^{-N} + \cdots + \frac{a_{N-1}}{a_N} T^{-1} + \frac{a_{N+1}}{a_N} T + \cdots + \frac{a_m}{a_N} T^{m-N}. 
\end{equation*}
One checks easily (cf. step 4) that the series $\sum_{i \ge 0}f^i$ converges to an element of $A(\frak{p})$, and that in $A(\frak{p}) \otimes K_\frak{p}$ we have 
\begin{equation*}
a a_N^{-1}T^{-N}\sum_{i \ge 0}f^i = 1. 
\end{equation*}
Now $A(\frak{p}) \otimes K_\frak{p}$ is a ring, so the image of $a^{-1}$ in $A(\frak{p}) \otimes K_\frak{p}$ must be equal to $ a_N^{-1}T^{-N}\sum_{i \ge 0}f^i$. 

Write $f = f_1 + f_2 \in \hat{R}_\frak{p}[r/T, T]$ where $f_1$ consists of those terms of $f$ which have both positive $T$-degree and coefficients which are $\frak{p}$-units. Since $v_\frak{p}(a_{i_0}) = 0$ we see $f_1 \neq 0$. Since $a^{-1} \in A\otimes K$, there exists $\lambda \in R$ such that $\lambda a^{-1} \in A$. Write $\lambda a^{-1} = \sum_{j \in \bb{Z}}\alpha_j T^j \in F^+ = A$. So in $A(\frak{p}) \otimes K_\frak{p}$ we find the equality
\begin{equation*}
\lambda\left( a_N^{-1}T^{-N}\sum_{i \ge 0}f^i\right) = \lambda\left( a_N^{-1}T^{-N}\sum_{i \ge 0}(f_1 + f_2)^i\right) = \sum_j \alpha_j T^j, 
\end{equation*}
and hence
\begin{equation*}
\lambda \sum_{i \ge 0} f^i = \lambda \sum_{i \ge 0}(f_1 + f_2)^i = a_N\sum_{j \in \bb{Z}} \alpha_jT^{j+N}. 
\end{equation*}

Since $a_N$ is a $\frak{p}$-unit, the series $\sum_{i\ge 0}f^i \in W(\frak{p})$ has coefficients in $\hat{R}_\frak{p}$, and $v_\frak{p}(\alpha_j) = v_\frak{p}(a_N\alpha_j) \ge v_\frak{p}(\lambda)$. Thus we may assume that  $\lambda \in R$ is a $\frak{p}$-unit. Write $\sum_{i \ge 0}f_1^i \eqqcolon \sum_{j \ge 0}\beta_jT^j \in 1 + TR[1/a_N][[T]]$. As all the coefficients of $f_2$ have $\frak{p}$-adic valuation strictly positive, we obtain
\begin{equation*}
v_\frak{p}(\alpha_{j-N}) > 0 \text{ for }j<0, \text{ and } v_\frak{p}(a_N\alpha_{j-N}-\lambda\beta_j) > 0 \text{ for } j \ge 0. 
\end{equation*}
Let $R'$ be the normalisation of the quotient $R/\frak{p}$, and set $K' = \on{Frac}(R')$, $A' = \frac{R'[[x,y]]}{(xy-\bar{r})}$. For any $b \in R[1/a_N]$, write $\bar{b}$ for its image in $K'$ - note that this makes sense since $a_N \notin \frak{p}$. In $K'[[T]]$ we find 
\begin{equation*}
\bar{\lambda} \sum_{i \ge 0} \bar{f}_1^i = \bar{a}_N\sum_{j \ge 0} \bar{\alpha}_{j-N}T^j \in R'[[T]]. 
\end{equation*}
We are done if we can show that an element of $K'[[T]]$ of the form $\bar{\lambda} \sum_{i \ge 0} \bar{f}_1^i$ can never be contained in $R'[[T]] \sub K'[[T]]$. By localising and completing at some generic point of the codimension-1 subscheme $V(\frak{q}R')\tra \on{Spec}(R')$, we deduce what we want from \cref{lem:liminf} applied to the polynomial $\bar{a}_N + \bar{a}_{N+1}T + \cdots + \bar{a}_mT^{m-N}$. 
\end{itemize} 
\end{proof}

\begin{lemma}\label{lem:liminf}
Let $R$ be a complete discrete valuation ring with fraction field $K$, valuation $v$ and uniformiser $\pi$. Let $f(T) = \sum_{i=0}^na_iT^i \in R[T]$ be a polynomial with $a_0 \neq 0$. Assume that the Newton polygon of $f$ has a side of slope $<0$ (in particular, that $f$ has positive degree). Write $f = a_0(1-\tilde{f})$ with $\tilde{f} \in TK[T]$, and 
\begin{equation*}
\sum_{i \ge 0}\tilde{f}^i \defeq \sum_{j \ge 0}b_jT^j \in K[[T]]. 
\end{equation*}
Then $\on{liminf}_j v(b_j) = -\infty$. 
\end{lemma}
\begin{proof}
Up to replacing $K$ by a finite extension, we may assume that $f$ splits in $K$, so all the slopes of the Newton polygon of $f$ are integers. Write $g(T) = f(\pi^\rho T)$ where $-\rho < 0$ denotes the smallest slope of the Newton polygon of $f$. Then the smallest slope of the Newton polygon of $g$ is $0$. We need to show that, if we write $\tilde{g} = \tilde{f}(\pi^\rho T) \in R[T]$, then there are infinitely many coefficients in $\sum_{i \ge 0}\tilde{g}$ which are invertible. Observe that $g = a_0(1-\tilde{g})$ and that $g$ has a side of slope zero with one end over $T^0$. Write $k$ for the residue field of $R$, and $h$ for the image of $\tilde{g}$ in $k[T]$. It is enough to show that the series $\sum_{i \ge 0} h^i \in k[[T]]$ is not a polynomial in $T$. Equivalently, we need to show that the inverse of $1-h$ in $k[[T]]$ is not a polynomial. This is clear since $\on{deg}(h)>0$ (since the Newton polygon of $g$ has a side of slope zero starting from $T^0$). 
\end{proof}


\section{Alignment is equivalent to flatness of closure of unit section in the relative Picard space}\label{sec:balance_equivalent_to_flatness}
In this section we return to our usual situation of a generically-smooth semistable curve $C$ over a base $S$. We want to understand more about the closure of the unit section in the relative Picard scheme of $C/S$. In fact, we will show that the closure of the unit section is flat (even \'etale) over $S$ if and only if $C/S$ is aligned.  In \cref{sec:Neron-models} we will relate the flatness of the closure of the unit section to the existence of N\'eron models.

\subsection{Test curves}

In this preliminary section, we will define `test curves' in $S$ which we will later use to detect flatness. 

\begin{definition}
Given $S$ a scheme, $s \in S$ and $U \sub S$ an open subscheme, a \emph{non-degenerate trait in $S$ through $s$} is a morphism $\phi:X \ra S$ where $X$ is the spectrum of a discrete valuation ring, and such that $\phi$ maps the closed point of $X$ to $s$ and the generic point of $X$ to a point in $U$. 
\end{definition}

\begin{lemma}[Non-degenerate traits exist]\label{lem:non_degenerate_traits_exist}
Let $S$ be a noetherian scheme, $s \in S$ a point, and $U\sub S$ a dense open subscheme. Then there exists a non-degenerate trait $X$ in $S$ through $s$. 
\end{lemma}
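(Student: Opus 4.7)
The key observation is that, since $S$ is noetherian and $U \subseteq S$ is scheme-theoretically dense, $U$ must contain every associated point of $S$, and in particular every generic point of an irreducible component of $S$ (this is the standard characterisation of scheme-theoretic density in the noetherian setting; cf.\ \cite[\href{http://stacks.math.columbia.edu/tag/01RE}{Tag 01RE}]{stacks-project}). Since every point of $S$ lies in the closure of some generic point, we may pick an irreducible component $Z \subseteq S$ containing $s$, and its generic point $\eta$ automatically lies in $U$. Thus the problem reduces to constructing a trait through $s$ whose generic point maps to $\eta$.

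Write $B \defeq \ca{O}_{Z,s}$, a noetherian local domain; the task is now to find a DVR $R$ together with a local injection $B \hookrightarrow R$. If $\dim B = 0$, then $s = \eta$ lies in $U$ already, and we may simply take $X = \on{Spec}\kappa(s)[[t]]$ mapping constantly to $s$; then both the closed and generic points of $X$ map to $s \in U$. If $\dim B \ge 1$, we invoke the classical fact that every noetherian local domain of positive dimension is dominated by a DVR (see e.g.\ \cite[\href{http://stacks.math.columbia.edu/tag/00PH}{Tag 00PH}]{stacks-project}); this produces a DVR $R$ with an injection $B \hookrightarrow R$ such that $\frak{m}_R \cap B = \frak{m}_B$.

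Setting $X = \on{Spec} R$, the composite
\[
X \lra \on{Spec} B \lra Z \hra S
\]
is the desired non-degenerate trait: the closed point of $X$ maps to $s$ by the domination property, and the generic point $(0) \subset R$ pulls back to $(0) \subset B$ by injectivity, hence maps to the generic point $\eta$ of $Z$, which lies in $U$. The main obstacle is purely citational — namely the existence of a DVR dominating a noetherian local domain — and the rest of the argument is essentially bookkeeping with the definitions of scheme-theoretic density and of a non-degenerate trait.
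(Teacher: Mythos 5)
Your proof is correct, and it takes a genuinely different route from the paper. The paper argues by induction on $\dim\ca{O}_{S,s}$: in dimension $0$ the point $s$ already lies in $U$; in dimension $1$ it normalises the local ring; and in higher dimension it cuts $S$ by the vanishing locus of a suitably chosen regular element, checking that $U$ restricted to this hypersurface remains scheme-theoretically dense, and then concludes by induction. You instead pass immediately to an irreducible component $Z\ni s$ (whose generic point lies in $U$ because scheme-theoretic density of $U$ in a noetherian scheme forces $U$ to contain all associated, hence all generic, points) and then invoke the classical theorem that a noetherian local domain which is not a field is dominated by a discrete valuation ring inside its fraction field. Your route buys you a cleaner argument: it sidesteps the prime-avoidance bookkeeping needed in the paper's inductive step to keep the restricted $U$ scheme-theoretically dense, and it delivers slightly more (a trait whose generic point hits a specified generic point of $S$ and whose fraction field equals $\kappa(\eta)$), at the cost of appealing to a heavier black box (Krull--Akizuki/Chevalley-type domination) where the paper's dimension-one base case only needs finiteness of normalisation for a one-dimensional local ring. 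Two cosmetic points: take $Z$ with its reduced induced structure so that $\ca{O}_{Z,s}$ is indeed a domain (composing $X\to Z\hra S$ is harmless since the definition of a non-degenerate trait only constrains the images of the two points), and double-check the Stacks tag for the domination theorem, which may not be exactly the one you quote; the fact itself is standard.
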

\begin{proof}
This is a special case of \cite[7.1.9]{Grothendieck1961EGAII}. 
%
\end{proof}

\subsection{A natural map to the closure of the unit section}\label{sec:construct_section}

The point of this section is to use the data of an integer weighting of the vertices of the reduction graph $\Gamma_{s}$ to define a map from $S$ to the closure of the unit section in the relative Picard scheme $\on{Pic}_{C/S}^{[0]}$, in the case where $C/S$ is aligned. 


\begin{definition}\label{def:vertex_labellings}
Let $(S,s)$ be a strictly henselian local scheme, and let $C \ra S$ be semistable and smooth over some schematically dense open subscheme $U \sub S$ (so that all labels of the graph $\Gamma_s$ are non-zero-divisors in $\ca{O}_S(S)$ by \cref{lem:non_zero_divisor}). An \emph{(integer) vertex labelling of the graph $\Gamma_s = (V,E,\on{ends})$} is a function $m:V \ra \bb{Z}$.

Let $\phi:T \ra S$ be a non-degenerate trait through $s$. We say a vertex labelling $m$ is \emph{$T$-Cartier} (better: $\phi$-Cartier) if for every edge $e$ in $\Gamma_s$ with endpoints $v_1$ and $v_2$, we have that $m(v_1)-m(v_2)$ is divisible by the thickness (see \cite[10.3.23]{Liu2002Algebraic-geome}) of the singular point in the special fibre of $C_T$ corresponding to $e$. 

Given a fibral Weil divisor $D$ on $C_T$, we define a vertex labelling $m$ of $\Gamma_s$ by attaching to a vertex $v$ the multiplicity of $D$ along the component corresponding to $v$. 
 \end{definition}

\begin{lemma}\label{lem:T_cartier_labelling}
In the above notation, let $m$ be a vertex labelling. Then $m$ is $T$-Cartier if and only if there exists a fibral Cartier divisor $D$ on $C_T$ whose associated labelling is $m$. 
\end{lemma}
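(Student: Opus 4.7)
The plan is to reduce the question to a computation in the complete local ring at each node. The Weil divisor associated to $m$ is $D = \sum_{v \in V} m(v)[C_v]$, and by construction its associated labelling is $m$; so it suffices to characterise when $D$ is Cartier. Since being Cartier is Zariski-local and $C_T$ is regular away from the finitely many nodes of the special fibre, $D$ is automatically Cartier on the smooth locus, and I only need to check Cartier-ness at each node $c$.

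Fix a node $c$ corresponding to an edge $e$ with endpoints $v_1,v_2$ and thickness $n \geq 1$. By \cref{prop:descent_of_cartier} applied to the faithfully flat completion $\mathcal{O}_{C_T,c} \to \widehat{\mathcal{O}}_{C_T,c}$, the divisor $D$ is Cartier at $c$ if and only if it is principal in the complete local ring. By \cref{local_rings_on_semistable} this ring is $A := \widehat R[[x,y]]/(xy - \pi^n)$, where $\widehat R$ is the completion of $\mathcal{O}_{T,\mathrm{cls}}$ and $\pi$ is a uniformiser; the two height-one primes of $A$ containing $\pi$ are $\mathfrak p_1 = (\pi,y)$ and $\mathfrak p_2 = (\pi,x)$, which we arrange to correspond to $C_{v_1}$, $C_{v_2}$. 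A direct calculation of valuations yields
\begin{equation*}
\mathrm{div}(\pi) = [\mathfrak p_1]+[\mathfrak p_2], \qquad \mathrm{div}(y) = n[\mathfrak p_1], \qquad \mathrm{div}(x) = n[\mathfrak p_2],
\end{equation*}
and the local form of $D$ at $c$ is $D_c = m(v_1)[\mathfrak p_1]+m(v_2)[\mathfrak p_2]$.

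For the easy implication ($T$-Cartier $\Rightarrow$ Cartier), I would write $m(v_1)-m(v_2) = kn$ and take $f = \pi^{m(v_2)}y^{k}$ (for $k \ge 0$) or $f = \pi^{m(v_1)}x^{-k}$ (for $k<0$), viewed as a rational function on $A$; then $\mathrm{div}(f) = D_c$ exhibits Cartier-ness. For the converse, assume $D$ is Cartier at $c$, so $D_c = \mathrm{div}(f)$ for some $f$ in the fraction field of $A$; writing $f = g/\pi^N$ for a suitable $N \ge 0$ gives $g \in A$ whose image in $A \otimes_{\widehat R} \mathrm{Frac}(\widehat R)$ is a unit. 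Applying \cref{CDthm:main} factors $g = s\, x^a y^b u$ with $s \in \widehat R$, $a,b \ge 0$, $ab = 0$ and $u \in A^{\times}$; computing $\mathrm{div}(g)$ from this factorisation and equating the coefficients of $[\mathfrak p_i]$ yields $m(v_1)-m(v_2)=(b-a)n$, so $n \mid m(v_1)-m(v_2)$. Quantifying over all edges of $\Gamma_s$ gives the equivalence. I expect the main obstacle to be the converse direction, whose only nontrivial input is the structural result \cref{CDthm:main}; the forward direction is an explicit local construction and the reduction to a local computation at each node is routine.
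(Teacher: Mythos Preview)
Your argument is correct. The paper itself does not prove this lemma at all; it simply cites \cite[page 15]{Raynaud1991Jacobienne-des-}. So you have supplied a proof where the paper gives only a reference.

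Your approach differs from the classical one in that you invoke \cref{CDthm:main} for the converse direction. This works: $\widehat R$ is a complete DVR, hence regular, so the hypotheses of \cref{CDthm:main} are satisfied with $r = \pi^n$; and since $A$ is a $2$-dimensional complete intersection which is regular in codimension~$1$, it is normal, so your passage from ``$\on{div}(g)$ effective'' to ``$g \in A$'' is justified. The computation $m(v_1)-m(v_2) = (b-a)n$ then goes through exactly as you wrote. What this buys you is a proof entirely internal to the paper, not relying on Raynaud. What it costs is that \cref{CDthm:main} is substantial machinery built precisely to handle the case where the base is higher-dimensional; over a trait the result is classical and can be obtained by a direct computation of the divisor class group $\on{Cl}(A) \cong \bb{Z}/n\bb{Z}$, which is what Raynaud does. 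So your route is valid but heavier than needed for this particular lemma.
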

\begin{proof}
This is \cite[page 15]{Raynaud1991Jacobienne-des-}. 
\end{proof}

If we suppose that $C/S$ is aligned at $s$, then from a $T$-Cartier vertex labelling $m$ we will construct a map from $S$ to the closure of the unit section in the Picard space $\Pictdz_{C/S}$. To give such a map is to give a line bundle on $C/S$, trivial over $U$. To construct this, we will first construct a Weil divisor on $C/S$, then prove that it is Cartier and hence defines a line bundle. 

When the base is a trait and $C$ is regular, one often constructs Cartier divisors on $C$ by taking irreducible components of the special fibre. If the base is a trait and $C$ is not regular but is semistable, then a similar procedure can be performed  taking care of multiplicities - see eg. \cite{Edixhoven1998On-Neron-models}. In our situation, the base $S$ is regular (but maybe of dimension greater than 1) and $C$ is semistable (but perhaps not regular). We will now carry out the analogous construction of Cartier divisors. 

\begin{definition}\label{definition_of_some_divisors}
We retain the above notation, and write $S = \on{Spec}A$. Let $a \in A$ a non-zero-divisor, non-unit. Let $E(a)$ denote the set of edges of $\Gamma_s$ whose labels $b$ have the property that the ideal $aA$ is a power of the ideal $bA$. Let $\Gamma_s(a)$ denote the graph obtained from $\Gamma_s$ be deleting all the edges in $E(a)$ (note that $\Gamma_s$ and $\Gamma_s(a)$ have the same vertex sets). 
\end{definition}
Suppose also that $S$ is regular. We will now give a recipe to associate a Cartier divisor on $C$ to any connected component of $\Gamma_s(a)$. We will then use these Cartier divisors to construct line bundles which will play an essential role in the proof of our main results. First, we need the notion of a `specialisation map' on labelled graphs. The construction of this map is carried in detail (and also much greater generality) in \cite[\S 5]{Holmes2014A-Neron-model-o}, so we only give an outline here. 

Recall that $S$ is strictly henselian local. Let $p \in S$ be any point. Then all singular points on the fibre $C_p$ are rational points, and moreover all irreducible components of $C_p$ are geometrically irreducible. To prove this, we note that $\on{Sing}(C/S) \ra S$ is finite and unramified, and hence is a disjoint union of closed immersions by \cite[\href{http://stacks.math.columbia.edu/tag/04GL}{Tag 04GL}]{stacks-project}, so the assertion on the singular points holds. For the geometric irreducibility, we note that there exists a section through the smooth locus of every irreducible component. 

Because of this, we naturally obtain a dual graph $\Gamma_p$ of $C$, even though the residue field of $p$ may not be separably closed. Similarly, we find that the labels on the graph $\Gamma_p$ can be defined in an analogous way to that used in the usual case, and moreover that these labels can be taken to be principal ideals of the Zariski local ring of $S$ at $p$ (whereas usually they live in the \'etale local ring). 

Given two points $p$, $q$ of $S$ with $p \in \overline{\{q\}}$, we have a canonical injective map
\begin{equation*}
\on{sp}\colon \ca{O}_{S,p} \ra \ca{O}_{S,q}. 
\end{equation*}
If we replace each label on the graph $\Gamma_p$ by its image under the map $\on{sp}$, then we obtain a new graph with labels in $\ca{O}_{S,q}$. Some of the labels may be units; contract all such edges. Then the resulting graph is naturally isomorphic to the labelled graph $\Gamma_q$. 

Write $\eta_1, \cdots, \eta_n$ for the generic points of $\on{Spec}A/a$ - these are height 1 prime ideals in $S$. Write $m_i$ for the order of vanishing of $a$ at $\eta_i$. Let $H$ be a connected component of the graph $\Gamma_s(a)$. Then for each $1 \le i \le n$, let $Z_i^1, \cdots, Z_i^{r_i}$ denote the vertices of $\Gamma_{\eta_i}$ which are images under the specialisation map of vertices in $H$. We can view each $Z_i^j$ as a prime Weil divisor on $C$. 

\begin{definition} Define a Weil divisor $\on{div}(a;H)$ by
\begin{equation*}
\on{div}(a;H) = \sum_{i=1}^n m_i \sum_{j=1}^{r_i} Z_i^j. 
\end{equation*}
We call such divisors \emph{primitive fibral Cartier divisors}; the name will be justified by the following lemma. 
\end{definition}

%

\begin{lemma}\label{lem:construct_cartier}
In the above setup, we have that $\on{div}(a;H)$ is a Cartier divisor on $C$. 
\end{lemma}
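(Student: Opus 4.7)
Being Cartier is a local property, so it suffices to check that $\on{div}(a;H)$ is principal in the completed local ring $B := \widehat{\ca{O}}_{C,c}$ for each $c \in C$. If $c$ is a smooth point of $C/S$, then $B \cong \widehat{\ca{O}}_{S,\pi(c)}[[t]]$ is a regular local ring (since $S$ is regular), hence a UFD, and every Weil divisor is principal. So assume $c$ is a node with $p := \pi(c)$; by \cref{local_rings_on_semistable} we have $B = \widehat{\ca{O}}_{S,p}[[x,y]]/(xy-\alpha)$. Write $e$ for the edge of $\Gamma_s$ corresponding to $c$, with endpoints $v_1, v_2$ and label $b$ whose image generates $\alpha\widehat{\ca{O}}_{S,p}$.

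A direct computation classifies the height-$1$ primes of $B$: they are the principal primes $(q)$ for height-$1$ primes $q$ of $\widehat{\ca{O}}_{S,p}$ with $q \nmid \alpha$, together with $(x,q)$ and $(y,q)$ for $q \mid \alpha$; localising $B$ at $(x,q)$ shows that $q$ is a uniformiser there, giving $\on{div}_B(q) = V(x,q)+V(y,q)$ when $q \mid \alpha$. Combining this with the definition of $\on{div}(a;H)$, the local contribution at $c$ has the form
\begin{equation*}
\on{div}(a;H)|_c = \sum_{q_i \nmid \alpha} \epsilon_i m_i V(q_i) + \sum_{q_i \mid \alpha}\left(\epsilon_i^{(1)} m_i V(x,q_i) + \epsilon_i^{(2)} m_i V(y,q_i)\right),
\end{equation*}
where $\epsilon_i, \epsilon_i^{(1)}, \epsilon_i^{(2)} \in \{0,1\}$ record which branches above $\eta_i$ specialise to vertices of $H$.

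The key combinatorial observation is: for any $\eta_i$ in $\on{Supp}(a)$, the $\Gamma_{\eta_i}$-equivalence class of any vertex $v$ of $\Gamma_s$ is contained in its $\Gamma_s(a)$-component. Indeed any edge in $E(a)$ has label $b'$ with $(a) = (b')^j$ for some $j$, so $q_i \mid b'$ (because $q_i \mid a$), and such edges are therefore \emph{not} contracted in passing to $\Gamma_{\eta_i}$. Consequently each of $\epsilon_i$, $\epsilon_i^{(1)}$, $\epsilon_i^{(2)}$ equals $1$ if and only if the corresponding endpoint $v_k$ of $e$ lies in $H$, independently of $i$; write $\epsilon^{(k)}$ for this common value.

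The proof then splits on whether $e \in E(a)$. If $e \notin E(a)$, then $v_1, v_2$ lie in the same $\Gamma_s(a)$-component, so $\epsilon^{(1)} = \epsilon^{(2)}$ and the two sums combine to $\sum_i m_i \on{div}_B(q_i) = \on{div}_B(a)$ (or to zero), which is principal. If $e \in E(a)$, then $(a) = (b)^k$ for some $k \ge 1$, so $\on{Supp}(a) = \on{Supp}(b)$, only the $q_i \mid \alpha$ terms appear, and $m_i = k\cdot\on{ord}_{q_i}(\alpha)$ for all $i$; when $\epsilon^{(1)} = \epsilon^{(2)}$ we reduce as above, and when exactly $v_1 \in H$ the contribution becomes $\sum_i k\cdot\on{ord}_{q_i}(\alpha) V(x,q_i) = k\on{div}_B(x)$, principal and generated by $x^k$ (and symmetrically if only $v_2 \in H$). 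The main obstacle is the local classification of height-$1$ primes of $B$ together with the identity $\on{div}_B(q) = V(x,q)+V(y,q)$; once these are established, the combinatorial observation and the case split on $E(a)$ deliver the result.
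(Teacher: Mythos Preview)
Your proof is correct and follows essentially the same route as the paper's: reduce to closed points, dispose of smooth points by regularity, and at a node $c$ corresponding to an edge $e$ split into the cases where $e$ has $0$, $2$, or $1$ endpoint in $H$, showing the local divisor is cut out by $1$, $a$, or $x^k$ respectively. Your ``combinatorial observation'' that edges in $E(a)$ are never contracted under $\on{sp}\colon\Gamma_s\to\Gamma_{\eta_i}$ is exactly what the paper uses, and your explicit classification of height-$1$ primes of $B$ together with $\on{div}_B(x)=\sum_{q\mid\alpha}\on{ord}_q(\alpha)\,V(x,q)$ is the computational counterpart of the paper's argument via the closures $\bar V,\bar V'$ and the identity $\on{div}a=\on{div}x^n+\on{div}y^n$. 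One small point: you should reduce at the outset to $c$ a closed point of $C$ (so that $p=s$ and $\widehat{\ca O}_{S,p}=A$), as the paper does; otherwise the match between the label $b\in A$ and the element $\alpha\in\widehat{\ca O}_{S,p}$ needs a word of justification.
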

\begin{proof}
We may assume $S$ is complete. Since $C$ is noetherian, integral and normal it is enough by \cref{lem:check_on_closed} to check that $D$ is Cartier at closed points of $C$. Since $C \ra S$ is proper it maps closed points to closed points, and so it is enough to check that $D$ is Cartier at all closed points of the closed fibre of $C \ra S$. 

The result is clear at smooth points in the fibre (since smooth over regular implies regular), so it is enough to look at points of $C$ corresponding to edges in $\Gamma_s$. Let $e$ be an edge of $\Gamma_s$. If $e$ has exactly 0 or 2 endpoints in $H$ (for example, if $e$ is not in $E(a)$) then this is easy; in the first case the divisor $\on{div}(a;H)$ is cut out near the point corresponding to $e$ by a unit, and in the second case by the function $a$. To see this second case, we first reduce (by looking at the local ring on $S$ at one of the $\eta_i$) to the case where $S$ is a DVR, $\eta_i$ is the closed point, and $a$ is a power of a uniformiser $\pi$, so $a = \pi^{m_i}$. Let $e$ be an edge with both endpoints in $H$, and let $Z$ and $Z'$ be the irreducible components of $C_{\eta_i}$ corresponding to those endpoints, so that near the point $e$ the divisor $\on{div}(a;H)$ is given by $m_i(Z + Z')$. Now $\on{ord}_Z(\pi) = 1$, for example because the maximal ideal of the local ring at $Z$ is generated by $\pi$, and the same for $Z'$. Then near the point $e$ we have that  $\on{div}(a;H) = m_i(Z + Z') = m_i\on{div} \pi = \on{div} a$. 

It remains to treat the case where the edge $e$ is in $E(a)$, and has one vertex $v$ in $H$ and one vertex $v'$ not in $H$. 

Let $b$ denote the label of $e$, so the completed local ring at the closed point $e$ on $C$ is isomorphic to 
$$R := A[[x,y]]/(xy - b), $$
and $a = b^n$ for some $n >0$. 
Setting $X = \on{Spec} R$, let $\phi \colon X \ra C$ be the map given by choosing such an isomorphism. Without loss of generality, assume that the function $x$ vanishes on $\phi^*v$ and that $y$ vanishes on $\phi^*v'$ (here we are thinking of $v$ and $v'$ both as vertices of $\Gamma_s$ and as irreducible components of $C_s$, to avoid excessive notation). 

If we can show that $\phi^*\on{div}(a;H) = \on{div}_X x^n$ then we are done by \cref{prop:descent_of_cartier}. Note that both $\phi^*\on{div}(a;H)$ and $\on{div} x^n$ are Weil divisors on $X$ supported over $\on{div}_Sa$. Consider one of the irreducible components of $\on{div}_Sa$, say $\eta_1$. Then the specialisation map $\on{sp}\colon \Gamma_s \ra \Gamma_{\eta_1}$ does not map $v$ and $v'$ to the same vertex, since (by construction) no edges in $E(a)$ are contracted by the map $\on{sp}$.  Write $V = \on{sp} v$ and $V' = \on{sp} v'$, so $V$ and $V'$ are irreducible components of the fibre $C_{\eta_1}$. 

Writing $\bar{V}$ (resp. $\bar{V'}$) for the Zariski closure of $V$ (resp. of $V'$) in $C$, we find that $\bar{V} \supseteq v$ and $\bar{V'} \supseteq v'$  (and also $\bar{V} \not\supseteq v'$ and $\bar{V'} \not\supseteq v$), and the same holds after pulling back along $\phi$. From this we see that $x^n$ does not vanish on $\phi^*\bar{V'}$, and $y^n$ does not vanish on $\phi^*\bar{V}$. Now on $X$ we have
$$\on{div}a = \on{div}b^n = \on{div}x^n + \on{div} y^n,$$
so we see that $x^n$ vanishes on $\phi^*\bar{V}$ with the same multiplicity as $a$ (and also $y^n$ vanishes on $\phi^*\bar{V'}$ with the same multiplicity as $a$). This shows that $\phi^*\on{div}(a;H) = \on{div}_X x^n$ as required. 
%
\end{proof}

\begin{lemma}\label{lem:check_on_closed}
Let $X$ be a normal integral locally noetherian scheme, and let $D$ be a Weil divisor on $X$. Suppose that for every closed point $x$ of $X$, the pullback of $D$ to $X_x = \on{Spec}\ca{O}_{X,x}$ is Cartier. Then $D$ is Cartier on $X$. 
\end{lemma}
\begin{proof}
\shorten{The proof is straightforward and is omitted. }{Since $X$ is locally noetherian, every point of $X$ specialises to a closed point of $X$. As such, it suffices to give for every closed point $x\in X$ an open neighbourhood $U_x$ of $x$ such that the pullback of $D$ to $U_x$ is Cartier. 

Pick a closed point $x \in X$. By assumption, there is an element $f_x \in \on{Frac}\ca{O}_{X,x}$ such that on $X_x$, we have that 
\begin{equation*}
D|_{X_x} = \on{div}_{X_x}f_x. 
\end{equation*}
Now let $V$ be any affine open neighbourhood of $x$, and compare the divisors
\begin{equation*}
D|_{V} \text{ and } \on{div}_Vf_x. 
\end{equation*}
Clearly the point $x$ is not contained in the support of the difference between these divisors. Let $U_x$ be the open neighbourhood of $x$ obtained by deleting the support of that difference. Then $D|_{U_x}$ is principal, cut out by $f_x$. }
\end{proof}

Our key existence result for Cartier divisors is \cref{lem:existence_of_Cartier_divisor}. Before proving it we need some results on graph theory. 

\subsection{Graph theory 1: Cartier functions}
Let $G = (V,E,\on{ends})$ be a finite connected graph, with edges labelled by positive integers (in practice, these will be thicknesses of singularities). Analogously to \cref{def:vertex_labellings}, we say a function $f\colon V \ra \bb{Z}$ is \emph{Cartier}  if given any edge $e \in E$ (with endpoints $v_1$ and $v_2$), we have that the label of $e$ divides $f(v_1 ) - f(v_2)$. We say a function $f\colon V \ra \bb{Z}$ is \emph{elementary-Cartier} if there exists an integer $n \ge 1$ such that
\begin{enumerate}
\item
the function $f$ is constant on connected components of the graph obtained from $G$ by deleting the set of edges $\{e \in E:\on{label}(e) \text{ divides } n\}$;
\item $f$ takes values in $n \bb{Z}$. 
\end{enumerate}
Note that every elementary-Cartier function is Cartier. 
\begin{lemma}\label{lem:elementary_cartier}
Every Cartier function on $V$ can be written as an integer linear combination of elementary-Cartier functions. 
\end{lemma}
\begin{proof}Write $V = \{v_1, \cdots, v_n\}$, and let $f$ be a Cartier function. We will write $f$ as an integer linear combination of elementary-Cartier functions. Note that, by taking $n=1$, every constant function is elementary-Cartier. We may thus assume that $f(v_1) = 0$. 

Suppose $f$ vanishes on $v_1, \cdots, v_r$ for some $1 \le r < n$. We will construct a sum $g$ of elementary-Cartier functions such that $f-g$ vanishes on $v_1, \cdots, v_{r+1}$, which will yield the claim by induction. 

We are assuming that $f$ vanishes on $v_1, \cdots, v_r$, and so the same must hold for $g$. If we write $G'$ for the graph obtained from $G$ by contracting $v_1, \cdots, v_r$ to a single vertex, then clearly $f$ is a lift (along the `contraction' map $G \ra G'$) of a function on $G'$, and one also sees that any (elementary-)Cartier function on $G'$ lifts to an (elementary-)Cartier function on $G$. As such, we are reduced to the case where $r=1$. 

We have that $f(v_1) = 0$, and we need to construct an integer linear combination $g$ of elementary-Cartier functions such that $g(v_1) = 0$ and $g(v_2) = f(v_2)$. We need a bit more notation, First, let $\Omega$ denote the set of prime numbers $p$ such that $p \mid \on{label}(e)$ for some edge $e$ of $G$, i.e. 
\begin{equation*}
\Omega = \{p \text{ prime}| \exists e \in E \text{ such that } p \text{ divides }\on{label}(e)\}. 
\end{equation*}
 Given a path $\gamma$ in $G$, we write $\on{gcd}(\gamma)$ for the greatest common divisor of the labels of edges in $\gamma$. A \emph{cut} is a set $C$ of edges of $G$ such that $v_1$ and $v_2$ lie in different connected components of the graph obtained from $G$ by deleting the edges in $C$ (we write it $G\setminus C$). If $C$ is a cut, we write $\on{lcm}(C)$ for the lowest common multiple of labels of edges in $C$. 

Since $f$ is Cartier, we see that for any path $\gamma$ in $G$ from $v_1$ to $v_2$, we have that
\begin{equation*}
\on{gcd}(\gamma) \mid f(v_2), 
\end{equation*}
hence
\begin{equation*}
\on{lcm}_\gamma\on{gcd}(\gamma)  \mid f(v_2), 
\end{equation*}
where the `lcm' is over paths $\gamma$ from $v_1$ to $v_2$. 

Let $p \in \Omega$ be a prime, and let $G_p$ be a labelled graph which has the same underlying graph as $G$, but with labels given by $\on{ord}_p\on{label}_G$. 
 Let
\begin{equation*}
m_p := \on{ord}_p \on{lcm}_\gamma\on{gcd}(\gamma)   = \on{max}_\gamma \on{ord}_p \on{gcd}(\gamma) = \on{max}_\gamma \on{min}_{e \in \gamma} \ell_p(e), 
\end{equation*}
where $\gamma$ is as above. Then applying `max-flow min-cut'\footnote{
This is a trivial variation of a standard result. It is easy to prove but needs some care to extract from the literature, so we prove it here. 

Let $H$ be a finite graph with edges labelled by non-negative integers, and $h_1$ and $h_2$ distinct vertices. The \emph{flow} along a path $\gamma$ from $h_1$ to $h_2$ is the minimum of the labels along the edges in the path, and the \emph{value} of a cut $C$ separating $h_1$ and $h_2$ is the maximum of the labels on edges in the cut. Then \emph{max-flow min-cut} states that the maximum over paths $\gamma$ of the flow along $\gamma$ (`max flow') equals the minimum over cuts $C$ of the value of $C$ (`min cut'). 

\emph{Proof:} Min cut $\ge$ max flow is obvious. For the converse, let $V'$ denote the set of vertices in $H$ which can be reached from $h_1$ by traversing only edges with label at least the min cut. If $h_2 \in V'$ then we are done, and if not then the set of edges with one end in $V'$ and the other end not in $V'$ is a cut, contradicting the definition of $V'$. $\qed$} to the graph $G_p$, we see that
\begin{equation*}
m_p = \on{min}_C \on{max}_{e \in C} \ell_p(e)
\end{equation*}
where the $\on{min}$ is now over cuts. We re-write this as
\begin{equation*}
m_p = \on{min}_C \on{ord}_p \on{lcm}(C). 
\end{equation*}
Let $C_p$ be a cut achieving this minimum. We have constructed $C_p$ as a cut of $G_p$, but we can also think of it as a cut of $G$; from now on we will do so. Set $L_p = \on{lcm}(C_p)$ \emph{where we take the labels in }$G$. Define a function
\begin{equation*}
g_p\colon V  \ra \bb{Z}
\end{equation*}
to take the value $L_p$ on vertices in the same connected component of $G \setminus C_p$ as $v_2$ and 0 elsewhere. The function $g_p$ is elementary-Cartier \emph{as a function on} $G$ since the $G$-labels of edges in $C_p$ divide $L_p$. 

Note that 
\begin{equation*}
\on{gcd}\{L_p:p \in \Omega\} \mid f(v_2)
\end{equation*}
since 
\begin{equation*}
\on{ord}_p(\on{lcm}(C_p)) = m_p = \on{ord}_p(\on{lcm}_\gamma \on{gcd}(\gamma)) \le \on{ord}_pf(v_2). 
\end{equation*}
Hence there exist integers $n_p$ such that
\begin{equation*}
f(v_2) = \sum_{p \in \Omega} n_p L_p. 
\end{equation*}
Then setting 
\begin{equation*}
g = \sum_{p\in\Omega}n_pg_p, 
\end{equation*}
we see that $g$ takes the required values. 
\end{proof}
\begin{lemma}\label{lem:existence_of_Weil_divisor} Let $(S,s)$ be a regular strictly henselian local scheme, $C/S$ a generically-smooth semistable curve, and $\phi\colon T \ra S$ a non-degenerate trait through $s$. Let $a \in \ca{O}_{S,s}$. Define $\Gamma_s(a)$ as in \cref{definition_of_some_divisors}. Let $\frak{G}_1, \cdots, \frak{G}_n$ denote the connected components of $\Gamma_s(a)$. Let $0 = c_1, c_2,  \dots, c_n$ be integers, and let $m:V \ra \bb{Z}$ be the vertex labelling given by assigning to a vertex $v$ of $\Gamma_s$ the integer $c_i$ such that $v$ is contained in $\frak{G}_i$. Assume that the $c_i$ are chosen such that $m$ is $T$-Cartier. 

Then there exists a Cartier divisor $D$ on $C$, trivial over the generic point of $S$, and such that the vertex labelling of $\phi^*D$ is $m$. 
\end{lemma}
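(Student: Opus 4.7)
The plan is to construct $D$ as a Weil divisor with coefficients on the irreducible components $Z_k^j$ of the fibres $C_{\eta_k}$ (with $\eta_1,\ldots,\eta_p$ the generic points of $V(a) \sub S$), then verify the Cartier property by exhibiting local equations at each closed point of $C$.

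The first step is to build, for each $\alpha \in \{1,\ldots,n\}$, an element $g_\alpha \in \on{Frac}(\ca{O}_{S,s})$ with $v_T(\phi^*g_\alpha) = c_\alpha$ and such that $g_\beta = g_\alpha \cdot b_e^{(c_\beta-c_\alpha)/N_e}$ for every edge $e \in E(a)$ connecting $\frak{G}_\alpha$ to $\frak{G}_\beta$, where $b_e$ is the label of $e$ and $N_e := v_T(\phi^*b_e)$; the exponent is an integer precisely by the $T$-Cartier hypothesis. Fixing $g_1=1$, the existence of such $g_\alpha$ reduces to checking consistency around loops in the quotient graph (whose vertices are the $\frak{G}_\alpha$ and whose edges are the elements of $E(a)$); I expect this to be the main obstacle. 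It follows from a telescoping identity: using the defining relation $(a)=(b_e)^{n_e}$ one gets $v_{\pi_k}(b_e)=m_k/n_e$ and $N_e = N/n_e$ (with $m_k = v_{\pi_k}(a)$, $N = v_T(\phi^*a)$), and then the loop product has $v_{\pi_k}$ equal to $(m_k/N)\sum_\ell (c_{\alpha_\ell}-c_{\alpha_{\ell-1}}) = 0$, hence is a unit in the UFD $\ca{O}_{S,s}$.

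Next, I set
\[
D \;:=\; \sum_{k,\,j} v_{\pi_k}\!\left(g_{\alpha(k,j)}\right) Z_k^j,
\]
where $\alpha(k,j)$ is the (well-defined) index of the component $\frak{G}_\alpha$ containing the $\on{sp}_{s \to \eta_k}$-preimage of the vertex of $\Gamma_{\eta_k}$ corresponding to $Z_k^j$; well-definedness is because edges outside $E(a)$ are precisely the edges of $\Gamma_s(a)$ and so never cross distinct $\frak{G}$-components. Since $D$ is supported in $\pi^{-1}(V(a))$ it is automatically trivial over the generic point of $S$, and at a smooth point of $v \in \frak{G}_\alpha$ every $Z_k^j$ through the point has colour $\alpha$, so locally $D = V(g_\alpha)$ and $\phi^*D$ has multiplicity $v_T(\phi^*g_\alpha) = c_\alpha$ on the $v$-component of $C_T$, giving the required labelling.

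The remaining task is to verify that $D$ is Cartier at each singular point $p$ of the closed fibre, which corresponds to an edge $e$ with completed local ring $R' = \widehat{\ca{O}_{S,s}}[[x,y]]/(xy-b_e)$. When $e \notin E(a)$ both endpoints of $e$ lie in a single $\frak{G}_\alpha$, and a case split on whether $b_e \in \frak{p}_k$ (decomposing $V(\pi_k)$ in $R'$) shows that $D$ near $p$ equals $V(g_\alpha)$, cut out by $g_\alpha$. When $e \in E(a)$ with endpoints in distinct $\frak{G}_\alpha, \frak{G}_\beta$ one has $b_e \in \frak{p}_k$ for every $k$, so $V(\pi_k) = V(\pi_k,x)+V(\pi_k,y)$ splits; using $V(y) = \sum_k v_{\pi_k}(b_e)\,V(\pi_k,y)$ together with $g_\beta/g_\alpha = b_e^{(c_\beta-c_\alpha)/N_e}$, the excess $y$-side contribution collapses to $\frac{c_\beta-c_\alpha}{N_e}V(y)$. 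Hence $D$ has local equation
\[
h \;=\; g_\alpha\cdot y^{(c_\beta-c_\alpha)/N_e} \;\in\; \on{Frac}(R')^\times,
\]
interpreted via $xy = b_e$ as $g_\alpha\cdot(x/b_e)^{(c_\alpha-c_\beta)/N_e}$ when the exponent is negative, completing the proof.
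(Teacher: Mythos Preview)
Your approach is correct, and it recovers the same divisor as the paper, but by a genuinely different route. The paper does not construct the auxiliary rational functions $g_\alpha$ at all. Instead, in the discussion preceding this lemma it introduces \emph{primitive fibral Cartier divisors} $\on{div}(a;\frak{G}_i)$ on $C$ (one for each connected component of $\Gamma_s(a)$), proves once and for all that each of these is Cartier by exhibiting the local equation $x^{n}$ at the relevant singular points, and then in the present lemma simply sets
\[
D \;=\; \sum_{i=1}^n \frac{c_i}{r}\,\on{div}(a;\frak{G}_i), \qquad r := \on{ord}_T\phi^*a,
\]
which is Cartier as a $\bb{Z}$-linear combination of Cartier divisors (the $T$-Cartier hypothesis forces $r\mid c_i$). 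One checks that your coefficient $v_{\pi_k}(g_\alpha)$ equals $c_\alpha m_k/r$, which is exactly the coefficient of $Z_k^j$ in the paper's $D$; so you are building the same object. Your method has the virtue of being self-contained and producing explicit local equations $g_\alpha$ and $g_\alpha y^{(c_\beta-c_\alpha)/N_e}$ in one pass, whereas the paper's argument is shorter here because the local verification has been offloaded to the preceding lemma on $\on{div}(a;H)$.

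One small point to tighten: your loop argument shows that the cocycle product is a \emph{unit} in $\ca{O}_{S,s}$, not that it equals~$1$. Thus the $g_\alpha$ are well-defined only up to units (or: define them along a spanning tree, and then for a non-tree edge the relation $g_\beta = g_\alpha b_e^{(c_\beta-c_\alpha)/N_e}$ holds only up to a unit). This is harmless for your purposes, since both the Weil divisor $D$ and the Cartier local equations are insensitive to units, but it is worth saying explicitly.
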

\begin{proof}
We begin by deleting all self-loops from $G$ (as they have no impact on the functions we want to construct). 

Write $S = \on{Spec}A$ and write $\on{ord}_T$ for the normalised valuation on $T$. Given an edge $e \in E(a)$, we know by definition that some power of the label of $e$ is equal to the ideal $aA$. Since $S$ is factorial, there exists $\alpha \in A$ such that for every edge $e \in E(a)$, the label of $e$ is a power of $\alpha A$. 

We are interested in functions on the vertices of $\Gamma_s$ which are constant on the $\mathfrak{G}_i$. These are canonically the same as functions on the vertices of the graph $G$ obtained from $\Gamma_s$ by contracting each $\mathfrak{G}_i$ to a point - in what follows we will sometimes not distinguish between these, but it will hopefully be clear from the context which is intended. 

Before proceeding with the proof, we will define various labelings on the edges and vertices of the graph $G$. 

\emph{Labellings on edges of G:}
\begin{itemize}
\item
$\ell_A$ the usual edge labelling taking values in the monoid of principal ideals of $A$ (but restricted to edges in $G$); 
\item $\ell_T$ taking values in $\bb{Z}$, defined by sending an edge $e$ to $\on{ord}_T\phi^*\ell_A(e)$. This coincides with (the restiction to $G$ of) the usual labelling taking values in principal ideals of $T$, followed by the $\on{ord}_T$ map to $\bb{Z}$. 
\item 
$\ell_\alpha$ defined by $\ell_\alpha(e) = \frac{\on{ord}_T\ell_A(e)}{\on{ord}_T\alpha}$ - this also takes values in $\bb{Z}$ by definition of $\alpha$. 
\end{itemize}

It is clear that for every $e$ we have that $\ell_T(e) = \ell_\alpha(e)\on{ord}_T\alpha$. 

\emph{Labellings on vertices of $G$:}
\begin{itemize}
\item
The labelling (corresponding to) $m$, taking values in $\bb{Z}$;
\item 
Now $G$ is connected, and for every edge $e$ from $u$ to $v$ we have that 
\begin{equation*}
\ell_T(e) \mid m(u) - m(v)
\end{equation*}
and also $\on{ord}_T\alpha \mid \ell_T(e)$. Using also that $m$ takes the value 0 on some vertex, we see that for every vertex $v$, the integer $m(v)$ is divisible by $\on{ord}_T\alpha$. We then define a new vertex labelling $m'$ by $m'(v) = m(v)/\on{ord}_T\alpha$, taking values in $\bb{Z}$. We see immediately that $m'$ is Cartier with respect to the labelling $\ell_\alpha$. 
\end{itemize}

Using \cref{lem:elementary_cartier} we decompose $m'$ into a sum of vertex labellings which are elementary-Cartier with respect to $\ell_\alpha$. We can then assume without loss of generality that there exists a positive integer $r$ and a set of vertices $H$ of $G$ such that $m'$ takes the value 0 outside $H$ and the value $r$ on $H$, and such that for all edges $e$ in $G$ with exactly one end in $H$ we have that $\ell_\alpha(e) \mid r$. 

Define $D:= \on{div}(\alpha^r; H)$. This is Cartier by \cref{lem:construct_cartier}. I then claim that the divisor $D$ on $C_T$  is the divisor that we seek, i.e. that the vertex labelling of $\phi^*D$ is $m = m'\on{ord}_T\alpha$. 

This is easy: first, it is clear that $\phi^*\on{div}(\alpha^r; H)$ corresponds to the 0 vertex labelling outside $H$. Let $h \in H$ be any vertex, and write $\eta_h$ for the generic point of the irreducible component of the special fibre of $C_T$ corresponding to $h$. Then by the proof of \cref{lem:construct_cartier} we see that $\phi^*\on{div}(\alpha^r; H)$ is cut out by $\alpha^r$ near $\eta_h$. Finally, we verify that 
\begin{equation*}
\on{ord}_T\alpha^r = r \on{ord}_T\alpha = m'(h) \on{ord}_T\alpha = m(h)
\end{equation*}
as required. 
\end{proof}

\subsection{Graph theory 2: achievable functions}
Let $G = (V, E, \on{ends})$ be a connected graph, with edges labelled by a set $L$ (i.e. a map $l:E \ra L$ is given). We say a function $f:V \ra \bb{Z}$ is \emph{pre-achievable} if there exists a subset $B$ of $E$ such that 
\begin{itemize}
\item
every edge in $B$ has the same label;
\item the function $f$ is locally constant on connected components of the graph obtained by deleting every edge in $B$ from $G$. 
\end{itemize}
We say a function $f:V \ra \bb{Z}$ is \emph{achievable} if it can be written as a (finite) sum of pre-achievable functions. One can show without too much difficulty that, if labels $l$ are constant on circuits in $G$, then every function $f:V \ra \bb{Z}$ is achievable. However, this is not enough for us; we also need to take into account an analogue of the condition that a vertex labelling be Cartier. In order to translate this notion into graph theory (and forget temporarily the geometric origins of the problem), we will add an additional labelling to the graph $G$; namely, we will label each edge $e\in E$ by a positive integer $n(e)$ (in addition to the labelling by symbols in $L$ given earlier). As before, we say that a function $f:V \ra \bb{Z}$ is \emph{Cartier} if for every edge $e \in E$ with endpoints $v_1$ and $v_2$, we have that $n(e)$ divides the difference $f(v_1)-f(v_2)$. We say a function $f$ is \emph{Cartier-achievable} if it can be written as a sum of Cartier pre-achievable functions. 

\begin{lemma}\label{lem:achieveable_functions}
In the setup of the previous paragraph, suppose that the labels in $L$ are constant on circuits in $G$ (we do not require that the integer labellings are constant). Then every Cartier function $f:V \ra \bb{Z}$ is Cartier-achievable, i.e. can be written $f = \sum_i f_i$ with each $f_i$ Cartier and pre-achievable. 

Suppose moreover that we pick a vertex $v_0$ such that $f(v_0) = 0$. Then it is possible to choose the $f_i$ such that $f_i(v_0) =0$ for all $i$. 
\end{lemma}
Before proving \cref{lem:achieveable_functions}, we briefly discuss the notion of a 2-vertex connected component of a graph $G$. Suppose that $G$ has no loops. Observe that any edge of a graph $G$ is contained in a 2-vertex connected subgraph (namely itself), and further that for any two 2-vertex-connected subgraphs $H_1$ and $H_2$ containing the same edge $e$, we have that $H_1 \cup H_2$ is also 2-vertex connected. As such, any edge of $G$ is contained in a unique maximal 2-connected subgraph. In particular, if $G$ contains no isolated vertices then $G$ is the union of its maximal 2-vertex-connected subgraphs. 
\begin{proof}[Proof of \cref{lem:achieveable_functions}]
We immediately reduce to the case where $G$ has no loops. We will inductively pick a sequence of subgraphs $$ v_0\in H_1 \sub H_2 \sub \cdots \sub H_n = G$$ and Cartier pre-achievable functions $f_1, \cdots, f_n$ on $G$ such that for all $i$, we have that $f|_{H_i} = \sum_{j=1}^if_j|_{H_i}$. It is clear by induction on $i$ that $f_i(v_0) =0$ for all $i$.

Pick $H_1 = G_1$ to be the 2-vertex-connected component of $G$ containing $v_0$. Then pick $H_{i+1}$ to be the union of $H_i$ with some 2-vertex-connected component $G_{i+1}$ of $G$ such that $G_{i+1} \cap H_i$ consists of exactly one vertex. Since $G$ is connected, we see that this procedure eventually exhausts the whole of $G$. 

Define $f_i$ by
$$f_i(v) = \left\{ \begin{split} f(v) - \sum_{j=1}^{i-1} f_j(v) & \text{ if } v \in H_i;\\
f(v')  - \sum_{j=1}^{i-1} f_j(v') & \text{ if } v \notin H_i
\end{split} \right.,$$
where $v'$ is chosen to be the unique vertex in $H_i$ such that $v$ and $v'$ are connected in the graph obtained from $G$ by removing every edge in $H_i$. 

We will now verify that that each function $f_i$ is Cartier and pre-achievable, and so that $f$ is Cartier-achievable. We will first treat $f_1$, then the general case by induction. 

Since $f$ is Cartier, we see that $f_1$ satisfies the Cartier divisibility condition for every edge in $H_1$. For every edge $e$ not contained in $H_1$, the function $f_1$ takes the same value at both endpoints of $e$, and hence the divisibility condition is trivially satisfied, so $f_1$ is Cartier. Since the labels on $G$ are constant on circuits, and $H_1$ is 2-vertex-connected, it follows that the labels are constant on $H_1 = G_1$. Define $B_1$ to be the set of edges in $G_1$, then it is clear that $f_1$ is constant on connected components of $G\setminus B_1$

Now we treat the general case. Choose $2 \le i \le n$, and suppose that $f_j$ is Cartier for all $j < i$. Now the function 
$$f'_i(v) = \left\{ \begin{split} f(v) & \text{ if } v \in H_i;\\
f(v') & \text{ if } v \notin H_i
\end{split} \right.,$$
(with $v'$ defined as above) is Cartier by the same argument as in the $f_1$ case, and hence $f_i$ is Cartier since a sum of Cartier functions is Cartier. 

It remains to check that $f_i$ is pre-achievable. Recalling that $G_i$ is a maximal 2-vertex-connected component and satisfies $H_i = G_i \cup H_{i-1}$, we define $B_i$ to be the set of edges in $G_i$. As before, the labels are constant on $B_i$ because the labels are constant on circuits and $G_i$ is 2-vertex-connected. Consider the connected components of $G \setminus B_i$. One of these components is $H_{i-1}$. Let $C$ be any other component. Then we see by construction of $f_i$ that it takes a constant value on $C$; the value it takes is exactly $f_i(v_C)$ where $v_C$ is the unique vertex contained in both $C$ and $G_i$. It remains to show that $f_i$ takes a constant value on $H_{i-1}$. Indeed, it takes the value 0; pick any vertex $v \in H_{i-1}$, then
\begin{equation*}
f_i(v) = f(v) - f_{i-1}(v) - \sum_{j=1}^{i-2}f_j(v) = f(v) - \left(f(v) - \sum_{j=1}^{i-2}f_j(v)\right) - \sum_{j=1}^{i-2}f_j(v) = 0. 
\end{equation*}
\end{proof}

\subsection{Proof of the main existence result}
The key existence result for Cartier divisors is:
\begin{lemma}\label{lem:existence_of_Cartier_divisor}
Let $(S,s)$ be a regular strictly henselian local scheme, and $C/S$ a semistable curve smooth over a dense open $U \sub S$. Assume that $C/S$ is aligned at $s$. Let $\phi\colon T \ra S$ be a non-degenerate trait through $s$. Let $m$ denote a $T$-Cartier vertex labelling of $\Gamma_s$ which takes the value $0$ at at least one vertex. Then there exists a Cartier divisor $D$ on $C/S$, trivial over the generic point of $S$, such that $m$ is the vertex labelling corresponding to $\phi^*D$. 
\end{lemma}

\begin{proof}
We will introduce two new labelings on $\Gamma_s$. First we will also label the edges by integers; for this, we will make a map from $\ca{O}_S/\ca{O}_S^\times$ to $\bb{N}_{0}\cup\{\infty\}$, by sending an element $a$ to $\on{ord}_T\phi^*a$, and then compose our usual labelling by elements of $\ca{O}_S/\ca{O}_S^\times$ with this map. Note that all of our edges will be labelled by positive integers. 

Next, let $L$ denote the quotient of $\ca{O}_S/\ca{O}_S^\times$ by the equivalent relation saying that $[a] = [b]$ if and only if $a^n = ub^m$ for some unit $u\in \ca{O}_S^\times$ and integers $m$, $n >0$. Composing our `usual' labelling of $\Gamma_s$ by $\ca{O}_S/\ca{O}_S^\times$ with the natural map $\ca{O}_S/\ca{O}_S^\times \ra L$ gives a labelling of the edges of $\Gamma_s$ by $L$. 

We now have three distinct (but related) labelings on the edges of $\Gamma_s$. We will denote the original labelling by elements of $\ca{O}_S/\ca{O}_S^\times$ by $\ell_{\on{orig}}$, the labelling by elements of the quotient $L$ by $\ell_L$, and the labelling by integers as $\ell_T$ (since it is the only one which depends on $T$). 

We then see that a vertex labelling of $\Gamma_s$ is $T$-Cartier if and only if it is Cartier with respect to $\ell_T$ in the graph-theory sense (cf. \cref{lem:T_cartier_labelling}). 

By the assumption that $C/S$ is aligned, the graph $\Gamma_s$ has the property that the $\ell_L$ labels are constant on circuits. As such, by \cref{lem:achieveable_functions} the $T$-Cartier vertex labelling $m$ can be written as a sum of pre-achievable $T$-Cartier functions (with respect to the two labellings $\ell_L$ and $\ell_T$) all of which take the value 0 at some vertex. 

It is then enough to show the following: let $m\colon V \ra \bb{N}_0$ be a function which is pre-achievable Cartier with respect to the two labellings $\ell_L$ and $\ell_T$ in the sense of the definition above \cref{lem:achieveable_functions}, and which takes the value 0 somewhere. Then there exists a Cartier divisor $D$ on $C$, trivial over the generic point of $S$, and such that $m$ is the vertex labelling corresponding to $\phi^*D$. 

To show this, we will apply \cref{lem:existence_of_Weil_divisor}. In order to do so, we first need to pick an element $a \in \ca{O}_S$. Well, since $m$ is pre-achieveable there exists by definition an element $l \in L$ such that $f$ is constant on the connected components of the graph obtained from $\Gamma_s$ by deleting every edge with label equal to $l$. Then there exists $a \in \ca{O}_{S}$ such that for every edge $e$ of $\Gamma_s$ with $\ell_L(e) = l$, we have that $a$ is a power of $\ell_{\on{orig}}(e)$. We fix such an $a$, and we will apply \cref{lem:existence_of_Weil_divisor} using that $a$. 

Now in order to apply \cref{lem:existence_of_Weil_divisor} we need to check two things:
\begin{enumerate}
\item
The function $m$ is $T$-Cartier; 
\item The function $m$ is constant on connected components of the graph $\Gamma_s(a)$. 
\end{enumerate}
For item 1, as remarked above, we see from the construction that a vertex labelling of $\Gamma_s$ is $T$-Cartier if and only if it is Cartier in the graph-theory sense (this depends only on $\ell_T$). For item 2, the function $m$ is by definition constant on connected components of the graph obtained from $\Gamma_s$ by deleting every edge $e$ such that $\ell_L(e) = l$. By definition of $a$ we have that $a$ is a power of $\ell_{\on{orig}}(e)$ whenever $\ell_L(e) = l$. 
%
\end{proof}

\subsection{Non-aligned curves, and non-existence of certain Cartier divisors}

\begin{lemma}\label{lem:non-existence_of_Cartier}
Let $S$ be a regular scheme, $U\sub S$ a dense open subscheme, and let $\pi\colon C\ra S$ be a semistable curve, smooth over $U$. Assume that $C/S$ is \emph{not} aligned at some $s \in S$. Then there exist a non-degenerate trait $\phi:T \ra S$ through $s$ and a Cartier divisor $D$ on $C_T$, trivial over the generic point, such that there does \emph{not} exist a Cartier divisor $E$ on $C/S$, trivial over $U$ and such that $\phi^*E$ is linearly equivalent to $D$. 
\end{lemma}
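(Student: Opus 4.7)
The strategy is to use \cref{CDthm:main} to attach integer invariants to any putative global extension $E$ at each edge of a non-aligned circuit, and then exploit the non-alignment to construct a $D$ whose jump pattern escapes these constraints. First, I may replace $S$ by its strict henselisation at a point $s$ where $C/S$ is not aligned, and write $R = \ca{O}_{S,s}$. Non-alignment yields a simple circuit $\gamma = (e_1,\ldots,e_k)$ in $\Gamma_s$ whose labels $a_1,\ldots,a_k \in R$ include two elements satisfying no relation of the form $a^p = u(a')^q$ with $p,q > 0$ and $u \in R^\times$; equivalently, since $R$ is regular and therefore factorial, the subgroup of the free abelian group on height-$1$ primes of $R$ generated by the $a_i$ has rank $\geq 2$.

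Choose any non-degenerate trait $\phi \colon T \to S$ through $s$ (\cref{lem:non_degenerate_traits_exist}); the thicknesses $t_i := \on{ord}_T \phi^*a_i$ are then positive integers. Fix an orientation of $\gamma$, and introduce the lattices
\begin{equation*}
L_D := \Bigl\{(J_i) \in \bb{Z}^k : t_i \mid J_i,\ \textstyle\sum_i J_i = 0\Bigr\}, \qquad L_E := \Bigl\{(j_i) \in \bb{Z}^k : \textstyle\prod_i a_i^{j_i} \in R^\times\Bigr\}.
\end{equation*}
A rank count gives $\on{rank} L_D = k-1$ while $\on{rank} L_E \leq k-2$, so the homomorphism $\iota \colon L_E \to L_D$, $(j_i) \mapsto (j_i t_i)$, is not surjective. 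Pick $(J_i) \in L_D \setminus \iota(L_E)$. The construction used in \cref{lem:existence_of_Cartier_divisor}, applied on the trait $T$ (so that the hypothesis of alignment holds trivially), then produces a fibral Cartier divisor $D$ on $C_T$ with the prescribed jump-vector, trivial on the generic fibre.

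Suppose for contradiction there were $E$ on $C$, trivial on $U$, with $\phi^*E \sim D$. At each node $c_i$, the formal completion of $C$ is $A_i = R[[x,y]]/(xy-a_i)$, and $E$ is cut out locally by some $f_i \in A_i$ becoming a unit in $A_i \otimes_R K$, where $K = \on{Frac} R$. By \cref{CDthm:main}, $f_i = s_i x^{n_i} y^{m_i} u_i$ with $s_i \in R$, $m_i n_i = 0$, and $u_i \in A_i^\times$; setting $j_i(E) := n_i - m_i$ (with signs chosen compatibly with the orientation of $\gamma$) produces an integer that depends only on $E$, not on the trait. A direct computation of multiplicities along the components of the special fibre of $C_T$ shows the jump of $\phi^*E$ across $e_i$ equals $j_i(E) \cdot t_i$. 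Applying this to every non-degenerate trait $T' \to S$ through $s$ yields $\sum_i j_i(E) \cdot \on{ord}_{T'}(a_i) = 0$; since the divisorial valuations of $R$ are jointly realised by such traits and $R$ is normal, this forces $(j_i(E)) \in L_E$. On the other hand, linear equivalence of fibral divisors on $C_T$ merely shifts all vertex labels by a common constant, so the jump-vectors of $\phi^*E$ and $D$ coincide, giving $(J_i) = \iota((j_i(E)))$, contradicting the choice of $(J_i)$. The main technical obstacle is the extraction of the invariant $j_i(E)$ from \cref{CDthm:main} together with verifying that enough divisorial valuations of $R$ are realised by non-degenerate traits through $s$ to force membership in $L_E$.
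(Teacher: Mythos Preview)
Your overall strategy matches the paper's: use \cref{CDthm:main} to constrain the ``jump integers'' of any global $E$ at the nodes of a non-aligned circuit, then exhibit a fibral $D$ on a trait whose jump pattern escapes these constraints. The rank comparison $\on{rank} L_D = k-1 > k-2 \ge \on{rank} L_E$ is correct and is essentially what drives the paper's argument too. However, there is a real gap in your proof, and the paper closes it by a more direct route.

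The gap is the deduction $(j_i(E)) \in L_E$ from the vanishing $\sum_i j_i(E)\,\on{ord}_{T'}(a_i)=0$ for all non-degenerate traits $T'$. You assert that ``the divisorial valuations of $R$ are jointly realised by such traits'', but this needs proof: you must show that for $b = \prod a_i^{j_i(E)} \in K^\times \setminus R^\times$ there exists a trait through $s$ with generic point in $U$ on which $b$ has nonzero valuation. This is plausible for regular local $R$, but ensuring simultaneously that the generic point lands in $U$ and that the valuation sees a prescribed height-$1$ prime takes work you have not supplied.

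The paper avoids this detour entirely. After passing to the completion at $s$ and choosing sections $\sigma_i$ through the smooth locus of each component $\mathfrak{C}_i$, the paper proves directly from \cref{CDthm:main} that across a node with label $a$ one has $\sigma_i^*E = a^n \cdot \sigma_j^*E$ as principal ideals of $R$, for some integer $n$. Walking once around the circuit then gives $\prod_i a_i^{n_i} \in R^\times$ \emph{immediately} --- this is exactly the membership $(n_i) \in L_E$, obtained without varying the trait at all. Concretely the paper picks two adjacent labels $a_0,a_1$ which are multiplicatively independent, sets $D=(\on{ord}_Ta_0)(\on{ord}_Ta_1)\cdot\mathfrak{C}_1$, and reads off the contradiction $a_0^{\on{ord}_Ta_1}=a_1^{\on{ord}_Ta_0}$ (up to units) in two lines. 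Your abstract lattice packaging is perfectly fine, but the section-pullback trick is the efficient way to produce the relation in $R$.

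A secondary point: constructing $D$ from a prescribed $(J_i) \in L_D$ only specifies jumps along $\gamma$, not a full $T$-Cartier vertex labelling on $\Gamma_s$. If there are edges outside $\gamma$ you must extend consistently; this works after scaling $(J_i)$ by the lcm of all thicknesses and choosing $(J_i)$ of infinite order in $L_D/\iota(L_E)$ (which the rank inequality allows), but you should say so. The paper sidesteps this by taking $D$ supported on a single component and simplifying to the case where $\Gamma_s$ is the circuit.
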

In fact, the proof will show that for \emph{every} non-degenerate trait $T$ through $s$ a Cartier divisor on $C_T$ as in the statement can be found. 
\begin{example}
Before giving the proof, it may be helpful to consider an example. Suppose $S = \on{Spec}k[[u,v]]$ with $k$ separably closed, and take $C/S$ so that the dual graph of the central fibre is a 2-gon with labels $u$ and $uv$ (so $C/S$ is not aligned and $C$ is not regular). Let $\frak{C}_0$ and $\frak{C}_1$ be the irreducible components of the central fibre.  Choose $T$ to be any non-degenerate trait through the closed point $s$ of $S$, set $d = \on{ord}_T uv$ and let $D$ be the Cartier divisor $d\frak{C}_1$ on $C_T$. Then there is an obvious candidate for $E$ as a Weil divisor: namely, the fibre of $C$ over $(u=0)$ breaks up into two irreducible components, and we could set $E = dX_1$ where $X_1$ is the component containing $\frak{C}_1$. The problem is that $E$ is not Cartier at the non-regular point (where the local equation is $xy = uv$); $X_1$ corresponds to the prime ideal $(x,u)$ assuming $x$ vanishes on $X_1$, and $(x,u)^d$ is not principal in 
\begin{equation*}
k[[u,v]][[x,y]]/(xy-uv). 
\end{equation*}
To go from this to a proof takes more work - for example, perhaps we could have made a different choice of Weil divisor $E$, and in any case we must consider more general curves. The key ingredient is \cref{CDthm:main}. 
\end{example}

\begin{proof}[Proof of \cref{lem:non-existence_of_Cartier}]
We may assume that $S$ is complete, local and has separably closed residue field. Write $s$ for the closed point, and $\frak{C}_0, \cdots, \frak{C}_n$ for the irreducible components of $C_s$. Then for each $i$, $C/S$ has a section $\sigma_i$ passing through the smooth locus of $\mathfrak{C}_i$. 

Suppose we are given two distinct components which meet at some non-smooth point $p$ --- for simplicity of notation assume they are $\frak{C}_1$ and $\frak{C}_2$ --- whose completed local ring in $C$ is isomorphic to 
$$R := \ca{O}_{S,s}[[x,y]]/(xy-a)$$
for some $a \in \ca{O}_{S,s}$. A Cartier divisor $E$ on $C$ is given locally near $p$ by an element $r \in (\on{Frac}R)^\times$. Let $f_1$, $f_2 \in \on{Frac}(\ca{O}_{S,s})^\times$ be such that $\on{div} f_i = \sigma_i^*E$. Note that the divisor $\sigma_i^*E$ is independent of the choice of section $\sigma_i$ through the smooth locus of $\frak{C}_i$ - indeed, the Cartier divisor $f^*\sigma_i^*E$ is equal to $E$ along the smooth locus of $\frak{C}_i$. If we assume that $E$ is trivial over $U$, I claim there exists $\delta \in \bb{Z}$ such that $f_1= a^\delta f_2$ (up to multiplication by units in $\ca{O}_{S,s}$). 

Our next aim is to establish the claim. To fix notation, let us assume that $x$ vanishes on $\frak{C}_1$, and $y$ vanishes on $\frak{C}_2$. Firstly, we reduce to the case where $E$ is effective by adding to $E$ the pullback of some effective Cartier divisor on $S$. By \cref{CDthm:main} (which applies by \cref{lem:non_zero_divisor}) we find $b \in \ca{O}_{S,s}$, $u \in R^\times$ and $m$, $n \in\bb{Z}_{\ge 0}$ such that $r = bux^{m}y^{n}$. 

Since the function $y$ is invertible at the generic point $\eta_1$ of $\frak{C}_1$, it follows that at $\eta_1$ the Cartier divisor $E$ is also defined by $bx^m$, and hence by $ba^m$ (recalling $xy = a$ and $y$ is invertible at $\eta_1$). Hence we have that $\sigma_1^*E = \on{div} ba^m$. A similar argument shows that $\sigma_2^*E = \on{div} ba^n$, so $\sigma_1^*E = \sigma_2^*E + \on{div}a^{m-n}$, establishing the claim.

By the assumption that $C/S$ is non-aligned at $s$, there is a circuit in the labelled graph $\Gamma_s$ with the property that the labels around the circuit do not satisfy any non-trivial multiplicative relation. Let $\frak{C}_0, \cdots, \frak{C}_N$ be the vertices around such a circuit (in order), and write $a_{i}$ for the label of the edge joining $\frak{C}_i$ to $\frak{C}_{i+1}$ (working modulo $N+1$). 



Choose any non-degenerate trait $\phi \colon T \ra S$ through $s$. Since $\phi$ is non-degenerate we see that $\phi^*a_0$ and $\phi^*a_1$ are both non-zero. Since $a_0$ and $a_1$ both vanish at the closed point $s$ (since $S$ is local) we see that $\phi^*a_0$ and $\phi^*a_1$ are both non-units. Hence we have that $\on{ord}_T\phi^*a_0>0$ and $\on{ord}_T\phi^*a_1>0$. 

Without loss of generality, let us assume that the labels $a_0$ and $a_1$ are not related - more precisely, that no non-trivial multiplicative relation holds between the principal ideals $a_0$ and $a_1$ of $\ca{O}_{S,s}$. We will now define a suitable divisor $D$ on $C_T$: let $d$ denote the product of all thicknesses of all singular points of $C_T = C\times_{S, \phi} T$, and set $D = d\frak{C}_1$.  We see by \cref{lem:T_cartier_labelling} that $D$ is Cartier. It is clear that $D$ has degree 0 over $U$, hence by \cite[9.1.2]{Bosch1990Neron-models} it has degree 0 on every fibre. Now suppose a divisor $E$ as in the hypotheses does exist. We see that $(\sigma_0)_T^*D = 0$ and we may assume that $\sigma_0^*E = 0$. With this assumption, note that if $\phi^*E$ is linearly equivalent to $D$ then it is equal to $D$ (since $\pi_*\ca{O}_C = \ca{O}_S$ by \cite[exercise 9.3.11]{Fantechi2005Fundamental-alg}, and so the only principal divisors supported on the special fibre are multiples of the fibre itself). Let $f_0, \cdots, f_N \in \ca{O}_{S,s}$ be such that $\on{div}f_i = \sigma_i^*E$. If two elements $\star$ and $\star'$ of $\ca{O}_{S,s}$ differ by multiplication by a unit we write $\star \sim \star'$, so in particular we have $f_0\sim 1$. From the above claim we know that $f_1 \sim a_0^{d_0}f_0$ for some $d_0 \in\bb{Z}$, so we have
\begin{equation}
\begin{split}
\on{ord}_T(\phi^*a_0)^{d_0} &= \on{ord}_T \phi^*f_1\\
& = \on{ord}_T \phi^*\sigma_1^*D\\
& = d, 
\end{split}
\end{equation}
and so $d_0 = d/\on{ord}_T a_0$ and hence $f_1 \sim a_0^{d/\on{ord}_T a_0}f_0$. Similar calculations show that $f_2 \sim a_1^{-d/\on{ord}_Ta_1}f_1$ and that $f_2 \sim f_3 \sim\cdots \sim f_N \sim f_0$. Combining these we find that $a_0^{d/\on{ord}_T a_0} \sim a_1^{d/\on{ord}_T a_1}$, which is a non-trivial multiplicative relation between $a_0$ and $a_1$ (since $(\on{ord}_T \phi^*a_0)(\on{ord}_T \phi^*a_1)$ divides $d$), contradicting our assumptions. 
\end{proof}


\subsection{Alignment is equivalent to flatness of the closure of the unit section in the relative Picard space}\label{sec:pic}


We recall the definition and some basic properties of the relative Picard space as in \cite{Bosch1990Neron-models}. 
\begin{definition}(\cite[8.1.2]{Bosch1990Neron-models})\label{def:Pic}
Let $S$ be a scheme, and $X/S$ an $S$-scheme. We define the \emph{relative Picard functor of $X$ over $S$} to be the fppf-sheaf associated to the functor 
\begin{equation*}
\begin{split}
\on{Sch}^{\on{op}}_S & \ra \on{Sets}\\
T & \mapsto \on{Pic}(X\times_S T). 
\end{split}
\end{equation*}
We denote it by $\on{Pic}_{X/S}$. Tensor product of line bundles gives a group structure on $\on{Pic}(X\times_S T)$ for each $T$, yielding a factorisation of this functor via abelian groups. 

Given a test scheme $T/S$, it is often useful to have a concrete way to represent elements of $\on{Pic}_{X/S}$. The simplest way to do this is to assume that $\pi\colon X\ra S$ is proper and of finite presentation, that $
\pi_*\ca{O}_X = \ca{O}_S$ universally, and that $\pi$ has a section, in which case the natural sequence
$$0 \ra \on{Pic}(T)  \ra \on{Pic}(X\times_S T) \ra \on{Pic}_{X/S}(T) \ra 0$$
is exact \cite[8.1.4]{Bosch1990Neron-models}. One can use rigificators to make this even more explicit \cite[8.1]{Bosch1990Neron-models}. Note the condition that $\pi_*\ca{O}_X = \ca{O}_S$ holds universally is true in our situation by \cite[exercise 9.3.11]{Fantechi2005Fundamental-alg}. 

There are numerous results on representability of the relative Picard functor by a scheme or an algebraic space. We are interested in the relative Picard functor of $C/S$ where $C/S$ is a semistable curve. In this case $\on{Pic}_{C/S}$ is a smooth quasi-separated algebraic space over $S$ by \cite[8.3.1]{Bosch1990Neron-models} and \cite[9.4.1]{Bosch1990Neron-models}, which apply since $C \ra S$ has reduced and connected geometric fibres, and so is cohomologically flat in dimension 0. Moreover, the fibre-wise connected component of the identity (denoted $\on{Pic}^0_{C/S}$) is a separated $S$-scheme \cite[9.4.1]{Bosch1990Neron-models}. Note that over a geometric point $s \in S$ the fibre $(\on{Pic}^0_{C/S})_s$ coincides with the sub-functor of $\on{Pic}_{C_s/s}$ consisting of line bundles having degree zero on every irreducible component of the fibre $C_s$. 

Suppose in addition to the above that $C/S$ is smooth over some schematically dense open $U \hra S$, with $U \hra S$ quasi-compact. Another important sub functor of $\on{Pic}_{C/S}$ is the space $\on{Pic}^{[0]}_{C/S}$ of `line bundles of total degree zero'. We define $\on{Pic}^{[0]}_{C/S}$ to be the scheme-theoretic closure in $\on{Pic}_{C/S}$ of $\Picz_{C_U/U}$. Its name is justified by the fact that it parametrises line bundles of degree zero on every fibre. To see this, note that the subfunctor (temporarily denoted $P^0$) of $\on{Pic}_{C/S}$ consisting of line bundles having degree zero on every fibre is an open and closed subfunctor, and clearly contains $\Picz_{C_U/U}$ and hence $\on{Pic}^{[0]}_{C/S}$. Since it is open in $\on{Pic}_{C/S}$ we have that $P^0$ is smooth over $S$, and so by \cite[th\'eor\`eme 11.10.5 (ii)]{Grothendieck1966EGA.IV.3} we have that $\Picz_{C_U/U}$ is schematically dense in $P^0$, and we are done. 

\end{definition}

\begin{theorem}\label{thm:balanced_equivalent_to_flat}
Let $S$ be a regular scheme, let $U \sub S$ be a dense open, and $C/S$ a semistable curve, smooth over $U$. Then the following are equivalent:
\begin{enumerate}
\item $C/S$ is aligned;
\item  the closure of the unit section in $\Pictdz_{C/S}$ is \'etale over $S$;
\item the closure of the unit section in $\Pictdz_{C/S}$ is flat over $S$.
\end{enumerate}
\end{theorem}

The first referee pointed out that it should be possible to replace the assumption that $S$ is regular in the above theorem by `for all smooth morphisms $T \ra S$ of relative dimension 1, the scheme $T$ is locally factorial', at the cost of lengthening some proofs. 


\begin{proof}
Since all these properties are local on the target, we immediately reduce to the case where $S$ is strictly henselian local. Then the smooth locus of $C/S$ admits sections through every connected component of every fibre, so $C/S$ is projective and every irreducible component of every fibre is geometrically irreducible, so that $\on{Pic}_{C/S}$ is a scheme by \cite[theorem 8.2.2]{Bosch1990Neron-models}. 

First we prove $(1) \implies (2)$. Suppose that $C/S$ is aligned. Write $\closure{}{e}$ for the closure of the unit section in $\Pictdz_{C/S}$, and let $p \in \closure{}{e}$ be a point. By \cref{lem:section_implies_flatness}, it is enough to construct a section from $S$ through $p$ in $\closure{}{e}$. First, let $\phi\colon T \ra \closure{}{e}$ be a non-degenerate trait through $p$; so the generic point of $T$ maps to $\closure{}{e}_U$, and the closed point maps to $p$ (such a $T$ exists by \cref{lem:non_degenerate_traits_exist}). Composing with the structure map from $\closure{}{e}$ to $S$, we obtain a non-degenerate trait $\phi_S\colon T \ra S$ and a semistable curve $C_T/T$ by pullback. Then the map $\phi$ gives a line bundle $\cl{L}$ on $C_T/T$ which is trivial over $\phi_S^*U$. To show that $\closure{}{e}$ admits a section over $S$ through $p$, we will construct a line bundle $\overline{\cl{L}}$ on $C/S$ such that $\phi_S^*\overline{\cl{L}} = \cl{L}$ and such that $\overline{\cl{L}}$ is trivial over $U$. 

Choosing a rational section of $\cl{L}$, trivial over $\phi_S^*U$, we in turn obtain a Cartier divisor $D$ on the relative curve $C_T$, with the property that the restriction of $D$ to the generic fibre is zero. We will construct a Cartier divisor $\overline{D}$ on $C/S$ which pulls back to $D$ over $T$, then define $\overline{\cl{L}} = \ca{O}_C(\overline{D})$. 

From $D$ we obtain a $T$-Cartier vertex labelling on $\Gamma_s$ as in \cref{def:vertex_labellings}. Without loss of generality, we may assume that this Cartier vertex labelling takes the value 0 somewhere. Then by \cref{lem:existence_of_Cartier_divisor} (here we use that $S$ is regular) we find a Cartier divisor $\overline{D}$ on $C$ as required. This shows $(1) \implies (2)$. 

The implication $(2) \implies (3)$ is clear. Finally we prove $(3) \implies (1)$, or rather that $\neg(1) \implies \neg (3)$. Suppose then that $C/S$ is not aligned; we will show $\closure{}{e}$ is not flat. By \cref{lem:non-existence_of_Cartier} (which again uses that $S$ is regular), we find a non-degenerate trait $T$ in $S$ through $s$ and a Cartier divisor $D$ on $C_T$, zero over the generic point of $T$, and such that there does not exist a Cartier divisor $\overline{D}$ on $C/S$ which pulls back to a divisor linearly equivalent to $D$. Now $\ca{O}_{C_T}(D)$ gives a map $T \ra \Pictdz_{C/S}$, and it maps the generic point of $T$ to a point in the unit section. Since $\closure{}{e}$ is closed in $\Pictdz_{C/S}$ and the generic point of $T$ lands in $\closure{}{e}$, we see that the image of $T$ is contained in $\closure{}{e}$. Write $t$ for the closed point of $T$ and $\phi:T \ra \closure{}{e}$ for the given map. 

Suppose now that $\closure{}{e}$ is flat over $S$; we will derive a contradiction. By \cref{lem:section_implies_flatness} we find a section $\sigma$ of $\closure{}{e} \ra S$ through $\phi(t)$. This section corresponds to a Cartier divisor $\overline{D}$ on $C/S$. This divisor is zero over $U$. Since $\closure{}{e}_U\ra U$ is an isomorphism, we find that the generic point of $T$ also maps to the image of $\sigma$. Since $T$ is reduced, we deduce that $\phi:T \ra \closure{}{e}$ factors via $\sigma$. As such, we find that the pullback of $\overline{D}$ to $T$ is linearly equivalent to the divisor $D$, a contradiction. 
\end{proof}

\begin{lemma} \label{lem:section_implies_flatness}
Let $S$ be a noetherian scheme and $U \sub S$ dense open. Let $f \colon X \ra S$ be a morphism of schemes locally of finite type and which is an isomorphism over $U$, and such that $f^{{-1}}U$ is schematically dense in $X$. Let $x \in X$ be a point. The following are equivalent:
\begin{enumerate}
\item
$f$ is \'etale at $x$;
\item
$f$ is flat at $x$;
\item there exists an open neighbourhood $V$ of $f(x)$ in S and a section $\sigma\colon V \ra X$ through $x$. 
\end{enumerate}
\end{lemma}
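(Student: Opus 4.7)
The direction (2)$\Rightarrow$(1) is straightforward: I would show that the section $\sigma\colon V \to X$ is in fact an open immersion onto a neighbourhood of $x$ on which $f$ is an isomorphism. Since the graph of $\sigma$ is a base change of the diagonal $\Delta_f$, $\sigma$ is always an immersion, hence locally closed. Over $V \cap U$ the map $\sigma$ must agree with $(f|_{f^{-1}(V\cap U)})^{-1}$, so $\sigma(V)$ contains $f^{-1}(V\cap U)$; the latter is scheme-theoretically dense in $f^{-1}V$ by the hypothesis that $f^{-1}U$ is scheme-theoretically dense in $X$. Hence the scheme-theoretic closure of $\sigma(V)$ in $f^{-1}V$ is all of $f^{-1}V$, and a locally closed subscheme with this property must be open. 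Then $f|_{\sigma(V)}=\sigma^{-1}$ is an isomorphism, so in particular $f$ is flat at $x$.

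For (1)$\Rightarrow$(2), openness of the flat locus for finite-type morphisms to a noetherian base lets me replace $X$ by an open neighbourhood of $x$ to assume $f$ is flat everywhere. The plan is then to show $f$ is an isomorphism in a neighbourhood of $x$; the inverse will provide the section. A first reduction: $X$ is integral. Indeed, by flatness every associated point of $X$ lies over an associated point of $S$; $S$ being integral forces these over $\eta_S \in U$, and since $f$ is an isomorphism over $U$ the unique preimage of $\eta_S$ is $\eta_X$, with $\ca{O}_{X,\eta_X}=K(S)$ a field. So $X$ has a unique, reduced associated point, and $K(X)=K(S)$.

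The technical heart is then to show $f$ is étale, by proving that $\Delta_f\colon X \to X\times_S X$ is an open immersion. The diagonal is always an immersion. Over $U$ one has $(X\times_S X)_U = U$ with $\Delta_f|_U=\on{id}_U$. Because $X\times_S X \to S$ is flat (as $f$ is) and flat pullback preserves scheme-theoretic density of open subschemes, $(X\times_S X)_U \hookrightarrow X\times_S X$ is scheme-theoretically dense. Hence $\Delta_f(X)$ is a locally closed subscheme whose scheme-theoretic closure is all of $X\times_S X$, and it must therefore be open. So $f$ is unramified, and flat plus unramified gives étale. I expect the scheme-theoretic density argument here (in particular the interplay of flat pullback with the locally-closed-with-dense-closure step) to be the main subtlety.

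To finish, at $x$ étaleness makes $\ca{O}_{X,x}$ free of rank $r=[k(x):k(s)]$ over $\ca{O}_{S,s}$. Using that $X$ is integral, tensoring up to $K(S)$ identifies $\ca{O}_{X,x}\otimes_{\ca{O}_{S,s}}K(S)$ with $K(X)$, so $r=[K(X):K(S)]=1$ by the first reduction. Hence $\ca{O}_{X,x}\cong\ca{O}_{S,s}$, and this isomorphism of stalks spreads out to an isomorphism over some open $V\ni s$, whose inverse is the desired section through $x$.
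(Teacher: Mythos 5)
Your direction (2)$\Rightarrow$(1) is correct and is essentially the paper's argument: the paper shrinks to affines so that the section becomes a closed immersion, whereas you use that any section is an immersion and that a locally closed subscheme containing a scheme-theoretically dense open is open; either way $\sigma(V)$ is an open neighbourhood of $x$ on which $f$ restricts to an isomorphism. Your reduction of (1)$\Rightarrow$(2) to the integral case and your analysis of $\Delta_f$ via associated points are also fine, and in fact reproduce the mechanism behind the lemma of L\"utkebohmert that the paper cites for this implication.

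The genuine gap is in your final step. The claim that \'etaleness at $x$ makes $\ca{O}_{X,x}$ a free $\ca{O}_{S,s}$-module of rank $[k(x):k(s)]$ is false: \'etale morphisms are only locally quasi-finite, and the local ring upstairs is in general not even a finite module over the one downstairs. For the squaring map $\bb{G}_m \ra \bb{G}_m$ over a field of characteristic different from $2$, with $x=s=1$, one has $[k(x):k(s)]=1$, yet $\ca{O}_{X,x}=k[t]_{(t-1)}$ is not finite over $\ca{O}_{S,s}=k[t^2]_{(t^2-1)}$. So the rank whose value you want to compute generically does not exist as written, and $\ca{O}_{X,x}\cong \ca{O}_{S,s}$ does not follow from what you have established ($f$ \'etale and birational does not formally give injectivity without a further argument). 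The repair stays inside your own framework: before analysing the diagonal, shrink $X$ to an affine neighbourhood of $x$ contained in the flat locus, so that $f$ becomes separated and $\Delta_f$ is a \emph{closed} immersion. Your density argument then shows that this closed immersion contains the scheme-theoretically dense open $(X\times_S X)_U$, hence is surjective and therefore an isomorphism --- not merely an open immersion. Thus $f$ is a flat monomorphism locally of finite presentation, hence an open immersion, and the inverse of $X \ra f(X)$ (with $f(X)$ open since flat plus finite type is open) is the required section through $x$. This is exactly the statement and proof of the lemma the paper delegates to.
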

\begin{proof}
$(1) \implies (2)$ is obvious. 

$(2) \implies (3)$: Shrinking, we may assume $f$ is surjective and of finite type, and both $X$ and $S$ are affine, so $f$ is also separated. Then by \cite[lemma 2.0]{Lutkebohmert1993On-compactifica} we find that $f$ is an isomorphism, in particular it has a section through $x$.

$(3) \implies (1)$: Shrinking, we may again assume $X$ and $S=V$ are affine so $f$ is separated. The image of $S$ in $X$ is a closed subscheme (by separatedness of $X/S$), and contains the schematically dense subscheme $f^{-1}U$. Hence the image of $S$ in $X$ is the whole of $X$, so $f$ is an isomorphism and hence \'etale. 
\end{proof}

\section{Flatness of closure of the unit section in the Picard space is equivalent to existence of a N\'eron model}\label{sec:Neron-models}

\begin{lemma}[N\'eron models satisfy smooth descent]\label{lem:NM_smooth_descent}
Let $S$ be a scheme, $U \hra S$ a schematically dense open with $U \hra S$ quasi-compact, and $f\colon S' \ra S$ a smooth surjective morphism. Let $A/U$ be an abelian scheme. Suppose $f^*A$ has a N\'eron model $N'$ over $S'$. Then $A$ has a N\'eron model $N$ over $S$, and moreover $f^*N = N'$. 
\end{lemma}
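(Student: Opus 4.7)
The plan is to use faithfully flat descent of algebraic spaces to construct $N$ by descending $N'$ along the smooth cover $f\colon S' \ra S$, and then to verify the N\'eron mapping property of the result.

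First I would construct a descent datum. Let $S'' = S' \times_S S'$ with projections $p_1, p_2\colon S'' \ra S'$. Since $f$ is smooth and separated, both $p_1$ and $p_2$ are smooth and separated; they are in particular flat, so $p_i^{-1}f^{-1}U$ is scheme-theoretically dense in $S''$, and $p_1^*N'$ and $p_2^*N'$ are both smooth separated algebraic spaces over $S''$. I would next observe that each $p_i^*N'$ is a N\'eron model of $p_i^*f^*A$ over $S''$: given a smooth morphism $T \ra S''$, the composite $T \ra S''\xrightarrow{p_i} S'$ is smooth, and a $p_i^{-1}f^{-1}U$-morphism $T_{p_i^{-1}f^{-1}U} \ra p_i^*f^*A$ is the same thing as an $f^{-1}U$-morphism from $T$ (viewed over $S'$ via $p_i$) to $f^*A$, which extends uniquely to $T \ra N'$ by the N\'eron property of $N'$. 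Since $p_1^*f^*A = p_2^*f^*A$, uniqueness of N\'eron models gives a canonical isomorphism $\phi\colon p_1^*N' \ra p_2^*N'$ extending the identity on the pullback of $A$. The same uniqueness argument applied over $S' \times_S S' \times_S S'$ forces $\phi$ to satisfy the cocycle condition.

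Next, by effective descent of algebraic spaces along smooth (or even fppf) covers, the pair $(N', \phi)$ descends to a smooth separated algebraic space $N/S$ together with an isomorphism $f^*N \ra N'$. Using that $A$ is itself an abelian scheme over $U$ and that $f^{-1}U \ra U$ is smooth surjective, the restriction of the descent datum $(N', \phi)$ to $f^{-1}U$ descends the identification $f^*A \ra N'_{f^{-1}U}$ to an isomorphism $A \ra N_U$.

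Finally I would verify the N\'eron mapping property. Let $g\colon T \ra S$ be smooth and $h\colon T_U \ra A$ a $U$-morphism. Form $T' := T \times_S S'$, which is smooth and surjective over $T$ (as $f$ is) and smooth over $S'$ (as $g$ is). Pulling $h$ back yields $T'_{f^{-1}U} \ra f^*A$, which extends uniquely by the N\'eron property of $N'$ to a morphism $H'\colon T' \ra N'$. Applying the same argument over $T' \times_T T'$ and using the uniqueness clause of the N\'eron property of $N'$, the two pullbacks of $H'$ agree, giving a descent datum on $H'$; since $N$ is an algebraic space over $S$ (so morphisms into it satisfy smooth descent), $H'$ descends to $H\colon T \ra N$ restricting to $h$ on $T_U$. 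Uniqueness of $H$ follows because $T_U$ is scheme-theoretically dense in $T$ (smooth pullback preserves this) and $N/S$ is separated. This establishes both the existence and uniqueness in the N\'eron mapping property, and the identification $f^*N = N'$ is already built into the descent.

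The main potential obstacle is the cocycle condition and, more broadly, ensuring that the uniqueness part of the N\'eron mapping property for $N'$ really does force all the compatibilities (descent datum on $N'$, descent of morphisms $H'$) without extra hypotheses; this in turn relies on the fact that smoothness and separatedness of $S' \ra S$ ensure all the relevant pullbacks of $U$ remain scheme-theoretically dense, so the extensions we construct are uniquely determined by their restrictions to these dense opens.
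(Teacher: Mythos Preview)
Your proposal is correct and follows essentially the same approach as the paper's proof: show that the pullback of a N\'eron model along a smooth morphism is again a N\'eron model, use uniqueness to obtain a descent datum satisfying the cocycle condition, apply effective descent of algebraic spaces to produce $N$, and then verify the N\'eron mapping property by descent of morphisms (using separatedness of $N$ and scheme-theoretic density of the pullback of $U$). The paper's proof is simply a terser summary of exactly these steps.
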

The author would like to thank K\k{e}stutis \v{C}esnavi\v{c}ius for pointing out that some seperatedness hypotheses in an earlier version of this lemma were unnecessary. 

\begin{proof}
A N\'eron model is unique if it exists, by the universal property. The pullback of $U$ is schematically dense in $S'$ by \cite[th\'eor\`eme 11.10.5 (ii)]{Grothendieck1966EGA.IV.3}. Moreover, one the pullback of a N\'eron model along a smooth morphism is again a N\'eron model. Combining this with the effectivity of descent for algebraic spaces \cite[\href{http://stacks.math.columbia.edu/tag/0ADV}{Tag 0ADV}]{stacks-project}, we deduce that $N'$ descends to a (smooth, separated) algebraic space $N/S$. We need to check that the descended object $N$ satisfies the N\'eron mapping property. Write $S'' = S' \times _S S'$ and $q\colon S'' \ra S$. Let $T \ra S$ be a smooth morphism, and $g\colon T_U \ra A$ a $U$-morphism. By descent, it is enough to check that there is a unique $S''$ morphism $q^*T \ra q^*N$ extending $q^*g\colon q^*T_{U} \ra q^*N$. Now $q^*T \ra S''$ is smooth, in particular flat, so by \cite[th\'eor\`eme 11.10.5 (ii)]{Grothendieck1966EGA.IV.3} again we have that $q^*T_U \ra q^*T$ has scheme-theoretic closure equal to $q^*T$. Moreover, $q^*N \ra S''$ is separated, so the conclusion holds by \cite[\href{http://stacks.math.columbia.edu/tag/084N}{Tag 084N}]{stacks-project}. 
\end{proof}


\begin{theorem}\label{lem:NM_exists}
Let $S$ be a locally noetherian scheme, $U \sub S$ a schematically dense open subscheme, and $f\colon C \ra S$ a semistable curve which is smooth over $U$.  Write $e$ for the unit section in $\Pictdz_{C/S}$, and $\closure{}{e}$ for the closure of $e$ in $\Pictdz_{C/S}$ (or equivalently in $\on{Pic}_{C/S}$). Write $J$ for the jacobian of the smooth proper curve $C_U/U$. 
\begin{enumerate}
\item \label{flat_implies_NM} If $C$ is regular and $\closure{}{e}$ is flat over $S$, then a N\'eron model for $J$ exists. 

\item \label{NM_implies_flat} If a N\'eron model for $J$ exists, then  $\closure{}{e}$ is \'etale over $S$. 
\end{enumerate}
\end{theorem}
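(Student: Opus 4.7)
The plan is to prove the two directions separately.

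For (1), the idea is to build $N$ as the fppf quotient $\cl{P}^{[0]}/\closure{\cl{P}}{e}$, as indicated in the introduction. First one verifies that $\closure{\cl{P}}{e}$ is a closed subgroup algebraic space of $\cl{P}^{[0]}$: closedness is built in, and closure under multiplication and inversion follows because these relations hold on the scheme-theoretically dense open $U$ and $\closure{\cl{P}}{e}$ is a scheme-theoretic closure. Given the flatness hypothesis, the fppf quotient $N := \cl{P}^{[0]}/\closure{\cl{P}}{e}$ exists as an algebraic space by standard results on quotients by flat closed subgroup spaces acting freely. Smoothness of $N/S$ follows by fppf descent from smoothness of $\cl{P}^{[0]}/S$. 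For separatedness, the unit section of $N$ is the image of the closed immersion $\closure{\cl{P}}{e} \hookrightarrow \cl{P}^{[0]}$, hence is closed in $N$; for group algebraic spaces, this is equivalent to separatedness.

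Next, verify the N\'eron mapping property for $N$. Uniqueness is standard: two extensions of a $U$-morphism $T_U \to J$ differ by a morphism $T \to N$ vanishing on $T_U$, and by separatedness of $N$ and scheme-theoretic density of $T_U$ in $T$ this must vanish on all of $T$. For existence, given smooth $T \to S$ and $f\colon T_U \to J$, the introduction indicates that $\cl{P}^{[0]}$ already satisfies the existence part of the mapping property when $C$ is regular; the key point is that $C_T$ is regular (as $T \to S$ is smooth and $C/S$ is regular), so Cartier divisors on the scheme-theoretically dense open $C_{T_U}$ extend to Cartier divisors on $C_T$ by taking scheme-theoretic closure, giving an extension $T \to \cl{P}^{[0]}$. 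Composing with the quotient $\cl{P}^{[0]} \to N$ yields the required $T \to N$.

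For (2), the plan is to use the universal property of $N$ to produce a group morphism $\phi\colon \cl{P}^{[0]} \to N$ and to identify its kernel with $\closure{\cl{P}}{e}$. Since $\cl{P}^{[0]}$ is smooth over $S$ with restriction $J$ to $U$, the mapping property yields a unique morphism $\phi$ extending the identity on $U$; uniqueness applied to the compositions $\cl{P}^{[0]} \times_S \cl{P}^{[0]} \to N$ coming from the two group laws shows $\phi$ is a morphism of group algebraic spaces. Since $N$ is separated, its unit section is closed, so $\closure{\cl{P}}{e} \subseteq \ker\phi$; conversely $\ker\phi$ is a closed sub-algebraic-space of $\cl{P}^{[0]}$ whose restriction to $U$ is the unit section, so scheme-theoretic density forces the reverse inclusion.

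The main obstacle is then showing $\ker\phi$ is flat over $S$; the plan is to prove the stronger statement that $\phi$ is smooth, after which flatness of $\ker\phi$ follows by pulling $\phi$ back along the unit section $S \to N$. The scheme-theoretic image of $\phi$ is a closed subgroup of $N$ containing $N|_U$, hence equals $N$ by density, so $\phi$ is surjective. On each geometric fibre, $\phi_s\colon \cl{P}^{[0]}_s \to N_s$ is then a surjective morphism of smooth algebraic groups over a field, hence faithfully flat and smooth. The fibral criterion, applied using that $\cl{P}^{[0]}$ and $N$ are both $S$-flat, upgrades fibrewise flatness to flatness (and then smoothness) of $\phi$ itself. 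Normality of $S$ enters to guarantee good behaviour of the scheme-theoretic image and of the fibrewise analysis at codimension-one points.
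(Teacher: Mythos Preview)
Your argument for part~(1) is essentially the paper's: form the quotient $\cl{P}^{[0]}/\closure{\cl{P}}{e}$, verify separatedness via the unit section, and use regularity of $C$ (hence of $C_T$ for smooth $T\to S$) to extend Cartier divisors by closure. The paper adds the reduction to the case where $C/S$ has a section (so that points of the Picard functor are honest line bundles) and passes to an \'etale cover $T'\to T$ by schemes, but these are routine.

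Part~(2), however, has a genuine gap in the smoothness argument for $\phi\colon\cl{P}^{[0]}\to N$. Your surjectivity step does not work: the equality ``scheme-theoretic image $=N$'' does not imply that $\phi$ is surjective on points, and in fact $\phi$ is \emph{not} surjective in general. When $C$ is not regular, the quotient $\cl{P}^{[0]}/\closure{\cl{P}}{e}$ is only an \emph{open subgroup} of the N\'eron model (this is exactly what is used later in the paper's proof of finite-type-ness, citing Edixhoven), so $\phi_s$ can fail to be surjective. Consequently your fibral-criterion argument breaks: flatness of $\phi_s$ is equivalent to its image being open in $N_s$, and you have no mechanism to show this. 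The underlying issue is that nothing you have written rules out $\phi^0_s\colon(\cl{P}^0)_s\to(N^0)_s$ having positive-dimensional kernel.

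The paper fills this gap with two ingredients you are missing. First, it invokes \cite[IX, 3.1(e)]{SGA7I}: since $\cl{P}^0$ is \emph{semi-abelian} and $\phi^0$ is an isomorphism over the dense open $U$ of the normal base $S$, the map $\phi^0\colon\cl{P}^0\to N^0$ is an open immersion (this is where normality of $S$ is really used). Second, a translation lemma (\cref{lem:flat_on_identity_implies_flat}) propagates smoothness from $\phi^0$ to $\phi$: smoothness is smooth-local on the source, so one translates an arbitrary point of $\cl{P}^{[0]}$ into the identity component after a smooth base change. A separate lemma (\cref{lem:f_flat}) is then used to pass from flatness of $\ker\phi$ to flatness of the closed subscheme $\closure{\cl{P}}{e}\subseteq\ker\phi$; your direct identification $\ker\phi=\closure{\cl{P}}{e}$ via ``scheme-theoretic density'' is also unjustified as stated, since density of $\cl{P}^{[0]}_U$ in $\cl{P}^{[0]}$ does not imply density of $(\ker\phi)_U$ in $\ker\phi$ until you already know $\ker\phi$ is $S$-flat.
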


\begin{remark}
\begin{itemize}
\item
It is not really necessary for $C$ to be regular; it would for example suffice to have $C_T$ locally factorial for every smooth morphism $T \ra S$ (which is probably a weaker condition, even if $C \ra S$ is smooth). 

\item A slightly different approach to proving part 1 of this result (suggested to the author by K\k{e}stutis \v{C}esnavi\v{c}ius) would be to apply the theory of parafactorial pairs \cite[21.13]{Grothendieck1967EGA.IV.4}. 

\item By the universal property of the N\'eron model there is a canonical map from $\on{Pic}^0_{C/S}$  to the fibrewise-connected-component-of-identity of the N\'eron model, which we denote by $\cl{N}^0$. By \cite[IX,  proposition 3.1 (e)]{SGA7I} (which applies to algebraic spaces, see discussion below) this map is an open immersion, and hence by \cite[VI$_A$ 0.5]{Grothendieck1970Schemas-en-grou} it is an isomorphism. Now $\on{Pic}^0_{C/S}$ is a scheme (\cite[4.3]{Deligne1985Le-lemme-de-Gab}, \cite[9.4.1]{Bosch1990Neron-models}), and hence so is $\cl{N}^0$. 

Moreover, the identity component of $\on{Pic}_{C/S}$ (and hence of the N\'eron model) is a quasi-projective scheme, by \cite[Remarque XI 1.3(c), Th\'eor\`eme XI 1.13]{Raynaud1970Faisceaux-ample}.

\item The \'etaleness of the closure of the unit section extends a result of Raynaud \cite{Raynaud1970Specialisation-} from the case where $S$ is a trait, but under much more restrictive hypotheses on $C/S$. 

\end{itemize}
\end{remark}


\begin{proof}
Note that $\closure{}{e}$ is a subgroup space of $\Pictdz_{C/S}$. For \eqref{flat_implies_NM}, we first observe that the quotient of $\Pictdz_{C/S}$ by $\closure{}{e}$ exists as an algebraic space since $\closure{}{e}$ is flat over $S$; we denote this quotient by $\cl{N}$. It is in a canonical way a group algebraic space over $S$. We will show that $\cl{N}$ is the N\'eron model of $J$. Combining \cref{lem:NM_smooth_descent} with the fact that $C/S$ admits sections \'etale locally (since the smooth locus of $C/S$ meets every fibre), we may assume that $C/S$ has a section. 

Note that $J =  {\Pictdz_{C/S}}\times_S U= \cl{N}_U$. Since $\cl{N}$ is separated (as its unit section is a closed immersion), the uniqueness part of the N\'eron mapping property is automatic; we need to show existence. Let $T \ra S$ be a smooth morphism of spaces, and $T_U \ra \cl{N}_U$ any $S$-morphism. Let $T' \ra T$ be an \'etale morphism, with $T'$ a scheme. By base-change, we obtain a morphism $T'_U \ra {\Pictdz_{C/S}} \times_S U$. 

Since $T' \ra S$ is smooth and $C$ is regular, we see that $T' \times_S C$ is regular \cite[Tag 036D]{stacks-project}. Since $C/S$ admits a section, the Picard functor coincides with the rigidified Picard functor (rigidified along that section), so in particular there exists a line bundle $\cl{F}$ on $T'_U \times_U C_U$ such that the map $T'_U \ra {\Pictdz_{C/S}}\times_S  U$ is given by that line bundle. Let $D$ be a Cartier divisor on $T'_U \times_U C_U$ such that $\cl{O}(D) \cong \cl{F}$. 

Write $D = \sum_i n_i p_i$ where the $p_i$ are prime Weil divisors. Let $\bar{p}_i$ denote the scheme-theoretic closure of $p_i$ in $T' \times_S C$, and write $\bar{D} = \sum_i n_i \bar{p}_i$. A-priori this $\bar{D}$ is a Weil divisor, but by regularity it is in fact Cartier, so that $\cl{O}(\bar{D})|_U = \cl{O}(D) \cong \cl{F}$. Let $\bar{\cl{F}} \defeq \cl{O}(\bar{D})$, then $\bar{\cl{F}}|_U = \cl{F}$. This sheaf $\bar{\cl{F}}$ defines a morphism $T' \ra \Pictdz_{C/S}$, and by composition a morphism $T' \ra \cl{N}$, which coincides with the original map $T'_U \ra \cl{N}_U$ upon restriction to $U$. Finally, we must descend this to a map $T \ra \cl{N}$, but this follows from the uniqueness part of the N\'eron mapping property. This proves the existence part of the N\'eron mapping property. 

To complete the proof of \eqref{flat_implies_NM}, we must show that $\cl{N}$ is smooth over $S$. We know that $\on{Pic}^{[0]}_{C/S}$ is smooth and is an fppf cover\footnote{We define `fppf cover' as in \cite[\href{http://stacks.math.columbia.edu/tag/021L}{Tag 021L}]{stacks-project}, in particular it need not be quasi-compact. The result we use can also be found in \cite[Proposition 17.7.7]{Grothendieck1967EGA.IV.4}. } of $\cl{N}$, so we deduce that $N$ is smooth. 

For \eqref{NM_implies_flat}, write $\cl{N}$ for the (smooth) N\'eron model of $J$. After base-change to an \'etale cover we may assume that $\on{Pic}_{C/S}$ is a scheme (since being \'etale is even fpqc local on the target by \cite[\href{http://stacks.math.columbia.edu/tag/02YJ}{Tag 02YJ}]{stacks-project}). We have the identity ${\Pictdz_{C/S}}\times_S U = \cl{N}_U$, and hence by smoothness of $\Pictdz_{C/S}$ over $S$ and the N\'eron mapping property, we get a canonical map $\phi\colon \Pictdz_{C/S} \ra \cl{N}$. 

Suppose for a moment that the map $\phi$ is flat. Write $K$ for the kernel of $\phi$; then the canonical map $K \ra S$ is also flat, and the canonical map $K \ra \Pictdz_{C/S}$ is a closed immersion since $\cl{N}$ is separated (and so $K$ contains $\closure{}{e}$).  Moreover, the preimage of $U$ is schematically dense in $K$ by \cite[th\'eor\`eme 11.10.5 (ii)]{Grothendieck1966EGA.IV.3}, so in fact $K = \closure{}{e}$, in particular $\closure{}{e}$ is flat over $S$. Then by \cref{lem:section_implies_flatness} it is \'etale over $S$ as required. 


As such, it suffices to show that $\phi$ is flat. By the \emph{crit\`ere de platitude par fibres} (\cite[\href{http://stacks.math.columbia.edu/tag/05X0}{Theorem 05X0}]{stacks-project}) it is enough to show that for every $s \in S$, the fibre $\phi_s$ of $\phi$ at $s$ is flat. Further, it is enough to show that the restriction $\phi_s^0\colon \on{Pic}^0_{C_s/s} \ra \cl{N}_s^0$ to the connected components of identity is flat - one just covers $\on{Pic}^{[0]}_{C_s/s}$ by translates of $\on{Pic}^0_{C_s/s}$, and uses that the former is a scheme by \cite{Artin1969Algebraization-}. 

Write $\phi^0$ for the restriction of $\phi$ to the fibrewise-connected components of identity. Clearly $\phi^0$ is an isomorphism over $U$. As such\footnote{Here we apply the result to a morphism of spaces, when the reference given proves it for schemes. It turns out that the same proof works for spaces. }, \cite[IX,  proposition 3.1 (e)]{SGA7I} implies that $\phi_0$ is an open immersion, in particular flat, hence so is $\phi^0_s$, and we are done.


%
%
\end{proof}

\begin{corollary}\label{cor:NM_outside_codimension_2}
Let $S$ be an excellent scheme, regular in codimension 1, and let $U \sub S$ be dense open and regular. Let $C/S$ be a semistable curve which is smooth over $U$. Then there exists an open subscheme $U \sub V \sub S$ such that the complement of $V$ in $S$ has codimension at least 2 in $S$ and such that the Jacobian of $C_U /U$ admits a N\'eron model over $V$, whose identity component is a quasi-projective $V$-scheme. 
\end{corollary}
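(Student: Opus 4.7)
The plan is to construct $V$ by successively removing closed subsets of codimension at least $2$ from $S$ until the hypotheses of \cref{thm:intro_NM} are satisfied on the resulting open subscheme. First, since $S$ is excellent and regular in codimension $1$, its regular locus is open with complement of codimension at least $2$; I would replace $S$ by this regular locus and so assume $S$ regular. The construction of the N\'eron model is étale-local (\cref{lem:NM_smooth_descent}), and étale morphisms preserve codimension, so after passing to an étale cover of $S$ I may assume $C/S$ is split semistable, the N\'eron model and associated bad locus of codimension at least $2$ then descending back to $S$.

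Next, I would replace $C$ by a regular model. By de Jong's resolution of singularities for split semistable curves \cite[5.6]{Jong1996Smoothness-semi}, there is a proper $S$-morphism $\tilde{C}\to C$ which is an isomorphism over $U$ and such that $\tilde{C}/S$ is split semistable and regular. Since $\tilde{C}_U = C_U$, constructing a N\'eron model of $\on{Jac}(C_U/U)$ is equivalent to constructing one of $\on{Jac}(\tilde{C}_U/U)$, so I would henceforth work with $\tilde{C}$ in place of $C$.

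The crucial third step is to control the non-alignment locus. The key observation is that at every codimension-$1$ point $s$ of the regular scheme $S$, the étale local ring $\ca{O}\et_{S,s}$ is a discrete valuation ring with uniformizer $\pi$, so every label of $\Gamma_s$ has the form $(\pi^n)$ for some $n\ge 1$; any two such labels satisfy $(\pi^a)^b = (\pi^b)^a$, and so alignment at $s$ is automatic. Combined with the closedness of the non-alignment locus (which follows from the specialisation behaviour of labelled graphs, as in the remark after \cref{def:balance}), the non-alignment locus is contained in a closed subset of codimension at least $2$. Removing it yields the desired open $V\supseteq U$ over which $\tilde{C}$ is regular, semistable, and aligned; applying \cref{thm:intro_NM} then produces a finite-type N\'eron model $N/V$ of the Jacobian.

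Finally, the identity component $N^0$ is identified with $\Picz_{\tilde{C}/V}$: in the construction of $N$ as $\Pictdz_{\tilde{C}/V}/\closure{\Pictdz_{\tilde{C}/V}}{e}$ from the proof of \cref{lem:NM_exists}, the subgroup $\closure{\Pictdz_{\tilde{C}/V}}{e}$ is finite flat over $V$ and hence meets each (connected) fibre of $\Picz_{\tilde{C}/V}$ only in the identity section. By \cite[9.4.1]{Bosch1990Neron-models} this is a separated smooth scheme of finite type over $V$, and since $\tilde{C}/V$ is semistable over a regular base it is in fact a semi-abelian scheme; quasi-projectivity then follows from standard results on semi-abelian group schemes. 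The main obstacle in this plan is verifying closedness and the codimension bound on the non-alignment locus, which (once $S$ has been reduced to the regular case) reduces to the DVR computation above together with a specialisation argument; everything else is bookkeeping around results already established in the paper.
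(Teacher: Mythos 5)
Your proposal is correct in outline and follows essentially the same route as the paper: excise the non-regular locus, produce a regular semistable model after a further codimension-2 excision via de Jong, note that alignment is automatic at points of codimension at most one (the \'etale local ring there is a DVR, so all labels are powers of a single uniformiser), and then invoke \cref{thm:intro_NM}. Two details need repair. First, de Jong's resolution results require not only that the curve be split semistable (which your \'etale localisation handles) but also that $S\setminus U$ be a strict normal crossings divisor; you must discard one more codimension-2 closed subset to arrange this before citing \cite[5.6]{Jong1996Smoothness-semi} (the paper instead uses \cite[lemma 5.2]{Jong1996Smoothness-semi}, which produces a model regular only outside a codimension-3 locus of $\tilde{C}$, and then discards the image of that locus in $S$ --- again codimension at least 2). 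Second, the closure $\closure{\cl{P}}{e}$ of the unit section is \emph{not} finite over $V$: already over a trait its special fibre is an infinite discrete group (generated by the classes of the fibral components), so your justification of $N^0\cong\Picz_{\tilde{C}/V}$ via finiteness fails as stated. The identification itself is correct --- it is the one asserted in the remark following \cref{lem:NM_exists} --- and the honest reason is that a fibral divisor of degree zero on every component of a fibre is a multiple of the whole fibre, hence pulled back from the base and trivial in $\on{Pic}_{C/S}$, so $\closure{\cl{P}}{e}$ meets $\Picz$ only in the unit section. With those two repairs the argument goes through and the remaining steps (finite type via \cref{prop:NM_is_of_finite_type}, quasi-projectivity of the identity component via the representability results for $\Picz$) agree with the paper's proof.
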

\begin{proof} Removing some closed codimension-2 subscheme from $S$, we may assume $S$ is regular. Alignment outside codimension 2 is clear from the definition. Existence of the N\'eron model follows immediately from \cref{thm:balanced_equivalent_to_flat} and \cref{lem:NM_exists} once we show that $C/S$ has a regular model outside some codimension 2 subscheme of $S$; the latter is \cref{prop:resolution_outside_codimension_2}. \end{proof}

\begin{proposition}\label{prop:resolution_outside_codimension_2} 
Let $S$, $U$, $C$ be as in the statement of \cref{cor:NM_outside_codimension_2}. Then there exist:
\begin{itemize}
\item
an open subscheme $U\sub V \sub S$ whose complement has codimension at least 2;
\item a modification (proper birational map) $\tilde{C} \ra C_V$ which is an isomorphism over $U$
\end{itemize}
such that $\tilde{C}$ is regular and $\tilde{C} \ra V$ is semistable. 
\end{proposition}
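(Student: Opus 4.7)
The plan is to make two successive codimension-$2$ reductions on $S$ and then resolve the resulting $A_{k-1}$-type singularities of $C$ by iterated blowup.

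First, since $S$ is excellent and regular in codimension~$1$, the regular locus $S^{\mathrm{reg}}$ is open in $S$ and its complement is closed of codimension at least~$2$; moreover $U \subseteq S^{\mathrm{reg}}$ since $U$ is regular. Replacing $S$ by $S^{\mathrm{reg}}$, I may assume $S$ is regular.

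Next I would arrange that the non-smooth locus is a disjoint union of regular divisors. Let $D \subseteq S$ be the image under $f\colon C \to S$ of the non-smooth locus of $f$; this is closed by properness of $f$, contained in $S \setminus U$, and hence of codimension $\geq 1$. Let $D_{1},\ldots,D_{r}$ be the codimension-$1$ irreducible components of $D_{\mathrm{red}}$. The pairwise intersections $D_{i} \cap D_{j}$ (for $i \ne j$), the singular loci of the $D_{i}$, and any irreducible components of $D$ of codimension $\geq 2$, are all closed of codimension $\geq 2$ in $S$; removing their union gives an open $V$ with $U \subseteq V \subseteq S$, $S \setminus V$ of codimension $\geq 2$, and such that on $V$ each $D_{i} \cap V$ is a regular divisor and the $D_{i} \cap V$ are pairwise disjoint. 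Then for every geometric point $s$ of $V$ and every non-smooth geometric point $c \in C$ over $s$, the singular ideal from \cref{local_rings_on_semistable} is supported on at most one $D_{i}$, so equals $(\pi^{k})$ for some $k \geq 1$ and regular parameter $\pi$ of $\hat{\mathcal{O}}_{V,s}$ cutting out $D_{i}$; hence $\hat{\mathcal{O}}_{C,c} \cong \hat{\mathcal{O}}_{V,s}[[x,y]]/(xy - \pi^{k})$. For $k = 1$ the point $c$ is already regular in $C$; for $k \geq 2$ it is a relative $A_{k-1}$ surface singularity.

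Finally these $A_{k-1}$ singularities can be resolved by the classical toric procedure: finitely many iterated blowups along a suitable codimension-$2$ centre in the local model replace $xy = \pi^{k}$ by a regular total space whose exceptional fibre is a chain of $k-1$ copies of $\mathbb{P}^{1}$ meeting each other (and the two original branches) transversally, so the result remains semistable over $V$. To globalise I would either (i) iteratively blow up the non-regular locus of $C_{V}$—an intrinsic globally defined closed subscheme of codimension $\geq 2$—and check by direct local computation that this executes the correct resolution, or (ii) perform the blowup \'etale-locally along the non-Cartier Weil divisor $(x,\pi)$ cutting out one branch of each node, then descend along the \'etale cover using that the resulting modification is canonical under the symmetry $x \leftrightarrow y$. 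The main obstacle is precisely this globalisation step—checking that the \'etale-local resolutions glue coherently into a regular $\tilde{C}$ with $\tilde{C}/V$ still semistable—but it should follow because the centres of the blowups are globally defined and the toric resolution of $A_{k-1}$ is canonical.
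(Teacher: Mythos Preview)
Your approach is correct and takes a more hands-on route than the paper's. The paper reduces (by discarding codimension~$2$) only to the case where $S$ is regular and $S\setminus U$ is a \emph{strict normal crossings} divisor, then invokes \cite[lemma 5.2]{Jong1996Smoothness-semi} as a black box to obtain a projective semistable modification $\tilde{C}\to C$ that is regular outside a closed locus of codimension $\ge 3$ in $\tilde{C}$; the image of that locus in $S$ has codimension $\ge 2$ and is thrown away as a final step. You instead make a strictly stronger preliminary excision---removing the crossings so that $V\setminus U$ is a \emph{disjoint} union of regular divisors---which, as you correctly argue, forces every singular ideal to be a pure power $(\pi^k)$ of a single regular parameter. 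This puts you essentially in the situation of a semistable curve over a DVR smoothly spread along the divisor, where the classical $A_{k-1}$ resolution by iterated blowup applies and yields a fully regular semistable model over all of $V$ with no further excision needed. Your option~(i) is the right way to globalise: the reduced non-regular locus is locally $V(x,y,\pi)$, and blowing it up replaces the $\pi$-chart by $uv=\pi^{k-2}$ while the $x$- and $y$-charts become regular, so finitely many iterations terminate. In effect you are reproving directly the non-crossing special case of de Jong's lemma that the paper cites; the paper's argument is shorter but less self-contained, while yours avoids the external reference at the cost of carrying out the resolution explicitly.
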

\begin{proof}
Throwing away codimension 2 loci, we reduce immediately to the case where $S$ is regular, and $S\setminus U$ is strict normal-crossings-divisor. By \cite[lemma 3.2]{Jong1996Smoothness-semi}, there exists a projective modification of semistable curves $\tilde{C} \ra C$ which is an isomorphism over $U$, and such that $\tilde{C}$ is regular outside some locus of codimension 3 in $\tilde{C}$. The image of that locus has codimension at least 2 in $S$; throwing it away, we are done. 
\end{proof}

\section{Non-aligned implies multiples of sections do not lift even after proper surjective base change}\label{sec:counterexample}

We have shown that the jacobian of a non-aligned semistable curve does not admit a N\'eron model, even after an alteration of the base. On the other hand, Gabber's lemma  \cite{Deligne1985Le-lemme-de-Gab} shows that any abelian scheme admits a semi-abelian prolongation after alteration\footnote{The statement in \cite{Deligne1985Le-lemme-de-Gab} requires a proper surjective morphism rather than just an alteration. However, the same proof yields the result after an alteration by replacing the reference to \cite[4.11, 4.12]{Deligne1969The-irreducibil} in lemme 1.6 by a reference to \cite[th\'eor\`eme 16.6]{Laumon2000Champs-algebriq}. Alternatively, one can deduce this version from that in \cite{Deligne1985Le-lemme-de-Gab} by a slicing argument. } of the base.

As a result, it is perhaps reasonable to hope that the situation is better if one does not attempt to construct a N\'eron model but rather considers only one section at a time. Given a scheme $S$, a dense open subscheme $U$, an abelian scheme $A/U$, and a section $\sigma \in A(U)$, one can ask whether there exists a semi-abelian prolongation $\cl{A}/S$ such that some multiple of $\sigma$ extends to a section in $\cl{A}(S)$, at least after replacing $S$ by an alteration. A positive answer to this question was incorrectly claimed by the author in a previous version of this paper on the arXiv - the error was pointed out by Jos\'e Burgos Gil. In this section, we provide an example to show that such a semi-abelian prolongation prolonging $\sigma$ will not in general exist even after proper surjective base-change of $S$. 

Construct a stable 2-pointed curve over $\bb{C}$ by glueing two copies of $\bb{P}^1_\bb{C}$ at $(0:1)$ and $(1:0)$, and marking the point $(1:1)$ on each copy. Then define $C/S$ to be the universal deformation as a 2-pointed stable curve. Choose coordinates such that $S = \on{Spec}\bb{C}[[x,y]]$, and $C$ is smooth over the open subset $U = D(xy)$. Call the sections $p$ and $q$. 

Now the graph over the closed point of $S$ is a 2-gon, with one edge labelled by $(x)$ and the other by $(y)$. The graph over the generic point of $(x=0)$ is a 1-gon with edge labelled by $(y)$, and similarly the graph over the generic point of $(y=0)$ is a 1-gon with edge labelled by $(x)$. All other fibres are smooth. In particular, $C/S$ is aligned except at the closed point.

Let $J/U$ denote the jacobian of $C_U/U$, and write $\sigma = [p-q] \in J(U)$. A \emph{pointed semi-abelian prolongation} consists of 
\begin{itemize}
\item
a proper surjective morphism $f\colon S' \ra S$;
\item a semi-abelian scheme $\cl{A}/S'$;
\item an isomorphism $\cl{A}|_{f^{-1}U} \ra f^*J$;
\item an integer $n \ge 1$;
\item a section $\tau \in \cl{A}(S')$;
\end{itemize}
such that $\tau$ extends $n\cdot f_U^*\sigma$, where $f_U$ denotes the restriction of $f$ to $f^{-1}U$. 

\begin{proposition}
No pointed semi-abelian prolongation exists. 
\end{proposition}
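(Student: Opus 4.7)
Suppose for contradiction that a pointed semi-abelian prolongation $(f\colon S'\to S,\cl A,n,\tau)$ exists. The plan is to exhibit, for each integer $m\ge 1$, a one-parameter test that forces $(m+1)\mid n$; this cannot hold for all $m$ simultaneously, giving the required contradiction.

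For each $m\ge 1$ I will consider the non-degenerate trait $\phi_m\colon T_m=\on{Spec}\bb{C}[[t]]\to S$ given by $x\mapsto t$, $y\mapsto t^m$. Since $f$ is proper and surjective, the valuative criterion of properness (applied after picking a point of $S'$ over the image in $U$ of the generic point of $T_m$) allows one to lift $\phi_m$, after a possibly ramified finite base change of degree $e=e_m\ge 1$, to a map $\phi'_m\colon T'_m=\on{Spec}\bb{C}[[t']]\to S'$ satisfying $t=u\,t'^e$ for some unit $u$. On $T'_m$ the pulled-back curve has its two nodes with singular ideals $(t'^e)$ and $(t'^{em})$; blowing up the second node in the standard way gives a regular semistable curve $\tilde C_m/T'_m$ whose special fibre is a N\'eron polygon of length $N_m\defeq e(m+1)$.

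The restriction $\cl A_{T'_m}$ is a semi-abelian scheme over the DVR $T'_m$ with generic fibre $J_{T'_m}$. Since $C/S$ is semistable, $J$ has semi-abelian reduction, so the identity component $N^0_{T'_m}$ of the N\'eron model of $J_{T'_m}$ is itself semi-abelian; by the N\'eron mapping property the canonical morphism $\cl A_{T'_m}\to N^0_{T'_m}$ is an isomorphism on generic fibres, and then by the standard open-immersion argument of \cite[IX 3.1]{SGA7I} it is an isomorphism of schemes over $T'_m$. The component group $N_{T'_m}/N^0_{T'_m}$ is cyclic of order $N_m$, and by the Abel--Jacobi description of \cite{Edixhoven1998On-Neron-models} the class of $\sigma=[p-q]$ in it is $\pm e$: indeed $p$ and $q$ lie on the two ``original'' components of the polygon, separated by $e$ blow-up components on one side of the cycle and $em$ on the other. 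The existence of $\tau_{T'_m}\in N^0_{T'_m}(T'_m)$ extending $nf^*\sigma$ therefore forces $ne\equiv 0\pmod{e(m+1)}$, i.e.\ $(m+1)\mid n$.

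Letting $m$ range over all positive integers yields $(m+1)\mid n$ for every $m$, which is impossible for any fixed positive integer $n$. The most delicate ingredients will be the uniqueness of semi-abelian prolongations of an abelian variety over a DVR (routine given \cite[IX 3.1]{SGA7I}) and the explicit identification of the component-group class of $[p-q]$; granted these, the divisibility obstruction is immediate.
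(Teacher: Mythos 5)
Your proof is correct and rests on the same mechanism as the paper's: restrict to suitable test traits, observe that a semi-abelian prolongation forces $n\sigma$ to land in the identity component of the N\'eron model over each trait, and contradict this with the fact that the order of $[p-q]$ in the component groups (read off from the lengths of the resulting N\'eron polygons) is unbounded. The only structural difference is how the traits are produced: the paper fixes a non-aligned geometric point of $S'$ (which requires the persistence of non-alignment under proper surjective base change, \cref{prop:persistence}) and varies traits through that point, whereas you take the explicit traits $x\mapsto t$, $y\mapsto t^m$ in $S$ and lift them to $S'$ by the valuative criterion of properness after a finite ramified extension; your version is marginally more self-contained for that reason. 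One cosmetic slip: the two original components are separated by $e$ \emph{edges} (hence $e-1$ blow-up components) on one side of the cycle and $em$ edges on the other, but your cycle length $e(m+1)$, the class $\pm e$ of $[p-q]$ in $\bb{Z}/e(m+1)\bb{Z}$, and the resulting divisibility $(m+1)\mid n$ are all correct.
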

\begin{proof}
Suppose a pointed semi-abelian prolongation is given. Let $X$ be a prime divisor on $S'$ lying over the prime divisor $(x)$ of $S$ such that $f^*C$ is not aligned at some geometric point  $p\in X$ (exists by the easy direction of \cref{lem:pull_back_balance}). The graph of $f^*C$ over $p$ is a 2-gon, with one edge labelled by $X$. Write $Z$ for the label of the other edge. Let $\phi_i\colon T_i \ra S'$ be a sequence of non-degenerate traits in $S'$ through $P$ such that $\on{ord}_{T_i}X = 1$ for all $i$, and such that $\on{ord}_{T_i}Z$ tends to infinity as $i$ tends to infinity. For each $i$, the pullback $(\phi_i \circ f)^*J$ admits a N\'eron model; write $\sigma_i$ for the extension of the section $\sigma$ given by the N\'eron mapping property. Then a simple calculation shows that the order of $\sigma_i$ in the component group of the N\'eron model of $(\phi_i \circ f)^*J$ tends to infinity with $i$. This contradicts the existence of a pointed semi-abelian prolongation. 
\end{proof}

\bibliographystyle{alpha} 
\bibliography{../../../prebib.bib}

\end{document}